\theoremstyle{plain}
  \newtheorem{theorem}{Theorem}[section]
  \newtheorem{lemma}{Lemma}[section]
  \newtheorem{proposition}{Proposition}[section]
  \newtheorem{corollary}{Corollary}[section]
  \newtheorem{definition}{Definition}[section]
  \newtheorem{remark}{Remark}[section]
   \newcommand{\beqn}{\begin{eqnarray}}
   \newcommand{\eeqn}{\end{eqnarray}}
   \newcommand{\beqs}{\begin{eqnarray*}}
   \newcommand{\eeqs}{\end{eqnarray*}}
   \newcommand{\ban}{\begin{eqnarray*}}
   \newcommand{\nan}{\end{eqnarray*}}
   \newcommand{\beq}{\begin{equation}}
   \newcommand{\eeq}{\end{equation}}
\renewcommand{\det}{\mbox{det}}
\newcommand{\R}{\mathbb{R}}
\numberwithin{equation}{section}
  \numberwithin{equation}{section}
  \numberwithin{figure}{section}
\begin{document}

\title[Regularity of singular set in optimal transportation]{\textbf{regularity of singular set in optimal transportation}}
 
\author[S. Chen]
{Shibing Chen}
\address [Shibing Chen]
{School of Mathematical Sciences,
University of Science and Technology of China,
Hefei, Anhui 230026, China}
\email{chenshib@ustc.edu.cn}

\author[J. Liu]
{Jiakun Liu}
\address [Jiakun Liu]
	{School of Mathematics and Statistics,
	The University of Sydney,
	Camperdown, NSW 2006, Australia}
\email{jiakun.liu@sydney.edu.au}

\thanks{Research of Chen was supported by National Key R and D program of China 2022YFA1005400,
 2020YFA0713100, National Science Fund for Distinguished Young Scholars
(No.12225111), NSFC No.12426202, NSFC No.12141105.
Research of Liu was supported by Australian Research Council DP230100499 and FT220100368. }

\subjclass[2000]{35J96, 35J25, 35B65.}

\keywords{Optimal transportation, Monge-Amp\`ere equation, singular set} 

\date{}

\begin{abstract}  
In this paper, we establish a regularity theory for the optimal transport problem when the target is composed of two disjoint convex domains. This is an important model in which singularities arise.
Even though the singular set does not exhibit any form of convexity a priori, we  prove its higher order regularity by developing novel methods, which also have many other applications.
Notably, our results are achieved without requiring any convexity of the source domain. 
This aligns with Caffarelli's celebrated regularity theory.
\end{abstract}

\maketitle

\baselineskip=16.4pt
\parskip=3pt

\section{Introduction}
Let $f, g \in L^1(\mathbb{R}^n)$ represent two probability densities concentrated on bounded open sets $\Omega, \Omega^*\subset\mathbb{R}^n$, respectively. Suppose there exists a positive constant \(\lambda\) such that $\frac{1}{\lambda} < f, g < \lambda$ in $\Omega, \Omega^*$, respectively.
Invoking Brenier's theorem \cite{Br1}, we identify two globally Lipschitz convex functions, $u$ and $v$, both defined on $\mathbb{R}^n$, satisfying the following conditions:
\begin{align}
    (Du)_\sharp f = g, & \text{ with } Du(x) \in \overline{\Omega^*} \text{ for almost every } x \in \mathbb{R}^n; \label{condi1}\\
    (Dv)_\sharp g = f, & \text{ with } Dv(y) \in \overline{\Omega} \text{ for almost every } y \in \mathbb{R}^n. \label{condi2}
\end{align}
From the regularity theory of Caffarelli \cite{C92}, if $\Omega^*$ is convex, then the potential function $u$ belongs to $C^{1,\beta}_{\text{loc}}(\Omega)$ for some $\beta\in(0,1)$ and is strictly convex inside $\Omega$.

If $\Omega^*$ consists of two disjoint convex domains, as seen in the example of \cite{C92} when mapping the unit disc onto two shifted half-discs, there appears a singular set $\mathcal{F}\subset\Omega$ on which $u$ is not differentiable. 
In this paper, considering the case when $\Omega^* = \Omega^*_1 \cup \Omega^*_2$, with $\Omega^*_1$ and $\Omega^*_2$ being bounded convex domains separated by a hyperplane $\mathcal{H}$, we will establish the regularity of the singular set $\mathcal{F}$.

From previous works \cite{CL1, KM1}, one knows $\mathcal{F} \subset \Omega$ is a Lipschitz hypersurface, partitioning $\Omega$ into two subdomains $\Omega_i$, $i = 1, 2$, such that 
	\begin{equation}\label{condi3} 
		(Du)_{\sharp}(f\chi_{\Omega_i}) = g\chi_{\Omega^*_i}\qquad \text{ for }\quad i = 1, 2.
	\end{equation}
Moreover, $\mathcal{F}$ is characterised as $\partial\Omega_i \cap \Omega$ for $i = 1, 2$, which can be expressed as the graph of a certain Lipschitz function.

Define the functions $u_i : \mathbb{R}^n\rightarrow \mathbb{R}$, for $i = 1, 2$, as follows:
	\begin{equation}\label{uidef}
    		u_i(x) := \sup\{L(x) : L \text{ is affine, } L \leq u \text{ in } \Omega_i, \text{ and } DL(x) \in \Omega_i^*\} \quad \forall x\in \mathbb{R}^n.
	\end{equation}
From \cite{CL1, KM1}, one has $\mathcal{F} = \{u_1 = u_2\} \cap \Omega$. 
Hence, if $u_i$ are differentiable at $x \in \mathcal{F}$, the unit normal vector to the hypersurface $\mathcal{F}$ at $x$ satisfies
\begin{equation}\label{gn1} 
    \nu_\mathcal{F}(x) = \frac{Du_1(x) - Du_2(x)}{|Du_1(x) - Du_2(x)|}.
\end{equation}
Under the additional assumption of strict convexity for $\Omega$, $\Omega^*_1$, and $\Omega^*_2$, the hypersurface $\mathcal{F}$ has been proven to be locally $C^{1,\beta}$ regular for some $\beta \in (0, 1)$, see \cite{CL1, KM1}.
We remark that the corresponding $C^{1,\beta}$ regularity of the free boundary in the optimal partial transport was obtained in \cite{CM,I}.

\vskip5pt
Notably, the domain convexity plays an important role in previous works.
Indeed, if $\Omega$ is convex, one can invoke Caffarelli's regularity theory \cite{C92} to derive the following key property:
\begin{equation}\label{alex1}
\frac{1}{\lambda^2}\chi_{{\Omega^*}}\leq \det D^2v\leq \lambda^2\chi{_{\Omega^*}}
\end{equation}
interpreted in the Alexandrov sense. Thanks to \eqref{alex1}, by using a localisation lemma one can apply the methods from \cite{C92b} to obtain a quantitative strict convexity estimate for $v$ near $Du_i(x_0)$ for $x_0\in\mathcal{F}$, which in turn implies $C^{1,\beta}$ regularity of $u_i$, $i=1,2$, \cite{CL1, KM1}. 
However, the non-convexity of $\Omega$ renders \eqref{alex1} inapplicable, creating a significant obstacle in deriving the $C^{1,\beta}$ regularity as outlined above.

In this paper, we employ some new ideas and techniques to overcome these difficulties.
Our first result is the $C^{1,\beta}$ regularity of $\mathcal{F}$ without necessitating any convexity of $\Omega$.

\begin{theorem}\label{t111}
Let $\Omega\subset\mathbb{R}^n$ be a bounded domain, and let $\Omega^* = \Omega^*_1 \cup \Omega^*_2$ be a union of bounded convex domains $\Omega^*_1$ and $\Omega^*_2$ in $\mathbb{R}^n$, separated by a hyperplane $\mathcal{H}$. If $\frac{1}{\lambda} < f, g < \lambda$ within $\Omega$ and $\Omega^*$, respectively, for some positive constant $\lambda$, then the hypersurface $\mathcal{F}$ (as long as it is non-empty) is $C^{1,\beta}$ regular inside $\Omega$ for some $\beta \in (0, 1)$. Additionally, $u_i \in C_{\text{loc}}^{1,\beta}(\Omega \cap \overline{\Omega_i})$ for $i = 1, 2$.
\end{theorem}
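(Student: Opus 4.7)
The plan is to reduce the theorem to proving $u_i \in C^{1,\beta}_{\mathrm{loc}}(\Omega \cap \overline{\Omega_i})$ for $i=1,2$, together with the nondegeneracy $Du_1(x_0)\ne Du_2(x_0)$ at every $x_0\in\mathcal{F}$. Granting these, the characterisation $\mathcal{F}=\{u_1=u_2\}\cap\Omega$ combined with \eqref{gn1} and the implicit function theorem exhibits $\mathcal{F}$ as a $C^{1,\beta}$ graph.

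The interior piece of the regularity of $u_i$ is essentially for free. By \eqref{condi3} and \eqref{uidef}, $u_i=u$ on $\Omega_i$, and $u_i$ is the Brenier potential of the restricted optimal transport from $f\chi_{\Omega_i}$ to $g\chi_{\Omega_i^*}$. Because the target $\Omega_i^*$ is convex and both densities are comparable to $1$, the target-convex version of Caffarelli's theorem \cite{C92} yields strict convexity and $C^{1,\beta}_{\mathrm{loc}}$ regularity of $u_i$ throughout the open set $\Omega_i$, with no convexity assumption on the source.

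The real work is extending this regularity up to $\mathcal{F}$, which is exactly where \eqref{alex1} was used in \cite{CL1, KM1}. My plan has three steps. \emph{(i)} Show $u_i$ is differentiable at each $x_0\in\mathcal{F}$: using $u_i\le u$ with equality at $x_0$ and the constraint $DL\in\Omega_i^*$ in \eqref{uidef}, $\partial u_i(x_0)\subset\overline{\Omega_i^*}$; a positive-dimensional face in this subdifferential would force positive $g$-mass outside $\Omega_i^*$ in the restricted transport, contradicting \eqref{condi3}, so $\partial u_i(x_0)$ is a singleton. \emph{(ii)} Prove a quantitative strict convexity estimate for $u_i$ at $x_0$ by analyzing its sections $S_h(x_0):=\{u_i<\ell_{x_0}+h\}$, where $\ell_{x_0}$ is the supporting hyperplane from step (i). Since the ``wrong'' side $\Omega_j$ $(j\ne i)$ carries no mass for the transport defining $u_i$, replace $\Omega_j\cap S_h(x_0)$ by a convex cap to obtain an auxiliary optimal transport on a convex source, apply the Caffarelli--Figalli localisation lemma \cite{C92b} to it, and transfer the resulting normalisation of $S_h(x_0)$ back to $u_i$ via a compactness/blow-up argument. \emph{(iii)} Iterate at successively finer scales to upgrade the strict convexity estimate to a $C^{1,\beta}$ modulus for $Du_i$ up to $\mathcal{F}$, following Caffarelli's classical iteration. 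The same quantitative estimate simultaneously supplies $Du_1(x_0)\ne Du_2(x_0)$, since otherwise the two one-sided sections at $x_0$ would overlap non-trivially.

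The main obstacle is step \emph{(ii)}: running a Caffarelli-type section/localisation analysis at a point $x_0\in\mathcal{F}$ where the source $\Omega_i$ is only Lipschitz. This is precisely the place where the failure of \eqref{alex1} bites, and constructing a genuinely local convex substitute for the source that preserves the OT structure on the $\Omega_i$-side, with controlled errors as $h\to 0$, is the technical heart of the argument. Once this is in place, the promotion from strict convexity to $C^{1,\beta}$ proceeds along standard Caffarelli-type lines, and the implicit function argument of the first paragraph delivers the regularity of $\mathcal{F}$.
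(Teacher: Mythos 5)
Your proposal works on the primal side with sections of $u_i$, whereas the paper works entirely on the dual side with sections of $v$; the two routes are genuinely different, and yours has two gaps, one of which you have correctly flagged yourself.

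First, step \emph{(i)} is flawed as stated. You observe $\partial^- u_i(x_0)\subset\overline{\Omega_i^*}$ and then claim that a positive-dimensional face in $\partial^- u_i(x_0)$ ``would force positive $g$-mass outside $\Omega_i^*$,'' contradicting \eqref{condi3}. This does not follow: by your own first observation any such face lies entirely inside $\overline{\Omega_i^*}$, so there is no mass ``outside'' to exploit. What a positive-dimensional face of $\partial^- u_i(x_0)$ actually encodes, via \eqref{dual222}, is that $v$ is affine along a segment of $\overline{\Omega_i^*}$ through $y_0$, and ruling this out requires an Alexandrov-type estimate on degenerate sections of $v$, not a mass-balance argument. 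The paper does precisely this in Lemma \ref{c1lemma}, after first establishing the localisation Lemmas \ref{locc1}--\ref{locc2} at an exposed point $p$ of the contact set $\Sigma=\{v=0\}$ and then applying the Figalli--Kim estimates \eqref{la1}--\eqref{ua1} to the centred sections $S^c_h[v]$.

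Second, the obstacle you correctly identify in step \emph{(ii)} is genuine and your ``convex cap'' surgery is not an argument: modifying the source region changes the Brenier map, and there is no evident control of the auxiliary problem against the original one as $h\to 0$ on a source whose boundary $\mathcal{F}$ is only Lipschitz a priori. The paper never faces this difficulty because it never touches the source. It works with $v$, whose natural domain is a union of convex sets, and proves the local Monge--Amp\`ere inequality \eqref{lockey1} for $v$ near $p\in\partial\Omega_1^*$. The key observation (the Localisation Lemma following Lemma \ref{c1lemma}) is that $\partial^- v$ maps $B_{\bar r_1}(p)\setminus\Omega_1^*$ into $\mathcal{F}$, which is Lebesgue-null, so $\det D^2 v$ carries no mass off $\Omega_1^*$; the convexity of $\Omega_1^*$ then makes the Caffarelli--McCann machinery applicable, yielding quantitative uniform convexity of $v$, and $C^{1,\beta}$ of $u_i=v^*$ up to $\mathcal{F}$ follows by Legendre duality. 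In short, the paper proves the strict-convexity/Monge--Amp\`ere localisation on the dual side where the domain is already convex, which is exactly the step your primal-side sketch leaves open.
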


\vskip5pt
A much more challenging problem concerns the higher order regularity of $\mathcal{F}$, which is precisely solved in the main theorem of this paper:
\begin{theorem}\label{main1}
Under the assumptions of Theorem \ref{t111}, and further assuming that $\Omega^*_i$ are $C^2$ and uniformly convex for $i=1,2$, and the densities $f\in C_{\text{loc}}^\alpha(\Omega)$ and $g \in C^\alpha(\overline{\Omega^*})$ for some $\alpha\in(0,1)$, it follows that the singular set $\mathcal{F}$  (as long as it is non-empty) is $C^{2,\alpha}$ smooth. Moreover, $u_i \in C_{\text{loc}}^{2,\alpha}(\Omega \cap \overline{\Omega_i})$ for $i = 1, 2$.
\end{theorem}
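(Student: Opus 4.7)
The plan is to reduce the $C^{2,\alpha}$ regularity of $\mathcal{F}$ to a boundary $C^{2,\alpha}$ regularity statement for the two partial potentials $u_1,u_2$ up to $\mathcal{F}$, and then invoke the implicit function theorem. Concretely, the goal is to show $u_i\in C^{2,\alpha}_{\text{loc}}(\Omega\cap\overline{\Omega_i})$ for $i=1,2$. Once this is in hand, the identity $\mathcal{F}=\{u_1=u_2\}\cap\Omega$ combined with \eqref{gn1}, which gives $Du_1(x_0)-Du_2(x_0)\neq 0$ at every $x_0\in\mathcal{F}$, makes the implicit function theorem applicable: parametrizing $\mathcal{F}$ locally as a graph $x_n=\phi(x')$ and differentiating the defining relation $u_1(x',\phi(x'))=u_2(x',\phi(x'))$ expresses $D\phi$ in terms of the one-sided boundary values of $Du_1-Du_2$ on $\mathcal{F}$, which are $C^{1,\alpha}$ once each $u_i$ is $C^{2,\alpha}$ up to $\mathcal{F}$; hence $\phi\in C^{2,\alpha}$.

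The heart of the proof is therefore the boundary $C^{2,\alpha}$ estimate for $u_i$ on $\Omega_i$ up to $\mathcal{F}$. Fix $x_0\in\mathcal{F}$ and set $y_0=Du_i(x_0)\in\partial\Omega^*_i$. On $\Omega_i$, $u_i$ is a convex Alexandrov solution of $\det D^2u_i=f/g(Du_i)$ with $C^\alpha$ right-hand side (using $f,g\in C^\alpha$ and the $C^{1,\beta}$ regularity of $Du_i$ from Theorem \ref{t111}), and the boundary condition $Du_i(\mathcal{F})\subset\partial\Omega^*_i$ is a second-boundary-value condition into the $C^2$ uniformly convex hypersurface $\partial\Omega^*_i$. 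I would set up a compactness/Campanato scheme: after affine normalizations that make $\mathcal{F}$ tangent to a hyperplane at $x_0$ and $\partial\Omega^*_i$ tangent to a hyperplane at $y_0$, rescale $x\mapsto x_0+rx$ and $y\mapsto y_0+ry$ and divide the potential by $r^2$. As $r\to 0$ the rescaled potentials converge, by the normalization estimates underlying Theorem \ref{t111}, to the explicit quadratic solution of the optimal transport problem between two half-spaces with constant densities. Quantifying the three sources of error --- $O(r^\beta)$ from the $C^{1,\beta}$ flatness of $\mathcal{F}$, $O(r)$ from the $C^2$ flatness of $\partial\Omega^*_i$, and $O(r^\alpha)$ from $f,g\in C^\alpha$ --- a Campanato-type iteration yields a pointwise $C^{2,\alpha'}$ bound on $u_i$ at $x_0$ with $\alpha'=\min(\alpha,\beta)$; a short bootstrap then feeds the resulting improved regularity of $\mathcal{F}$ back into the argument to upgrade $\alpha'$ to the sharp $\alpha$.

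The main obstacle is precisely this quantitative compactness step, because $\mathcal{F}$ is only $C^{1,\beta}$ and is not uniformly convex: the classical second-boundary-value-problem theory of Urbas and the standard Caffarelli boundary $C^{2,\alpha}$ estimate for Monge--Amp\`ere both require uniform convexity of the source, and so do not apply here directly. The remedy is to replace that missing input by the $C^{1,\beta}$ flatness of $\mathcal{F}$ provided by Theorem \ref{t111}, which makes the source side flat enough under rescaling to avoid a scale-invariant loss, while the $C^2$ uniform convexity of $\Omega^*_i$ provides the corresponding flattening on the target side and pins down the blow-up limit uniquely. I expect the delicate stability estimate for the model quadratic solution under simultaneous small perturbations of both boundaries --- presumably in the spirit of the new methods alluded to in Remark \ref{remark1.1} --- to be the most technically demanding part of the argument.
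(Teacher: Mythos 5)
Your high-level architecture matches the paper's: reduce $C^{2,\alpha}$ of $\mathcal{F}$ to boundary $C^{2,\alpha}$ estimates for $u_1,u_2$ up to $\mathcal{F}$, and then use \eqref{gn1} (equivalently, the implicit-function-theorem step on $u_1=u_2$) to transfer regularity to $\mathcal{F}$. And the perturbation/Campanato iteration you sketch for the boundary estimate is indeed the mechanism the paper ultimately uses, borrowed from \cite[Section 4]{CLW3}. But there is a decisive gap before that machinery can even start, and you have not flagged it.

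For your blow-up scheme to converge to ``the explicit quadratic solution of the optimal transport problem between two half-spaces,'' the limit configuration must be \emph{nondegenerate}: the source normal $\nu$ and target normal $\nu^*$ at the matched boundary points must satisfy $\nu\cdot\nu^*>0$ (and likewise $\hat\nu\cdot\hat\nu^*>0$). If instead $\nu\cdot\nu^*=0$, the rescaled potentials do not converge to a uniformly convex quadratic model, so there is no Campanato decay to feed into the iteration; the scheme simply fails. In the classical second-boundary-value setting (Caffarelli \cite{C96}, Urbas \cite{U1}, Chen--Liu--Wang \cite{CLW1}, \cite{CLW3}) this obliqueness is \emph{proved}, and its proof leans on convexity or semi-convexity of the source boundary. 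Here $\mathcal{F}$ has no convexity property at all, so that route is closed, and a new argument is required. That argument is exactly the content of Proposition \ref{uniob}/\ref{oblique2} and all of Section \ref{S4} of the paper: it proceeds by ruling out the three degenerate tangential configurations via two successive blow-ups, a $(3+\epsilon)$-uniform-convexity estimate on $v$ at $y_0$ and $\hat y_0$, the ``above the tangent'' splitting property for $\mathcal{F}$, and the flattening analysis of Lemma \ref{limitdist} et seq. Your proposal simply assumes the nondegenerate blow-up limit and attributes the difficulty to the missing uniform convexity of $\mathcal{F}$, to be cured by $C^{1,\beta}$ flatness; but $C^{1,\beta}$ flatness alone does not prevent $\nu\cdot\nu^*=0$, and in the absence of obliqueness no amount of flatness of $\mathcal{F}$ or $\partial\Omega_i^*$ yields the required model solution.

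There is also a smaller structural point. The paper does not go directly from $C^{1,\beta}$ to $C^{2,\alpha}$; once obliqueness is in hand it first upgrades $u_i$ and $\mathcal{F}$ to $C^{1,1-\epsilon}$ (Lemmas \ref{tc12}--\ref{tc141}, built on the uniform density estimate of Lemma \ref{ud3}) and only then runs the perturbation argument from \cite{CLW3} at the $C^{1,1-\epsilon}$ level. Your single-shot Campanato scheme starting from $C^{1,\beta}$ would, at best, need the same intermediate $C^{1,1-\epsilon}$ stage; the iteration cannot close at scale $\beta$ because the boundary errors are not yet $o(r)$. The bootstrap you mention would not repair this unless the $C^{1,1-\epsilon}$ estimate is obtained first, and that estimate in turn requires obliqueness. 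In short: the proposal is a correct skeleton for the final step of the proof, but it omits the central and hardest part --- establishing obliqueness without any convexity of $\mathcal{F}$ --- and without that the compactness argument cannot begin.
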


It is worth noting that when the densities $f$ and $g$ are merely continuous, our method implies that the singular set $\mathcal{F}$ is $C^{1,1-\epsilon}$ for any $\epsilon>0$.

Note that for each $i=1,2$, $u_i$ satisfies the second boundary value problem:
\begin{equation*}
\left\{ 
\begin{array}{rl}
  \det\, D^2 u_i \!\!&= \ \frac{f}{g(D u_i)} \quad \text{in } \Omega_i, \\
  D u_i(\Omega_i) \!\!&= \ \Omega^*_i.
\end{array}
\right.
\end{equation*} 
From \eqref{gn1}, it suffices to prove $u_i$ is $C^{2,\alpha}$ up to the singular set $\mathcal{F} \subset \partial \Omega_i$. 
All previous related works require some sort of convexity of domains: In the study of global regularity of the optimal transport map, Caffarelli \cite{C96} requires the domains to be uniformly convex (see also \cite{D91, U1}). Recently, in \cite{CLW1} Chen-Liu-Wang relax this condition from uniform convexity to merely convexity. In the study of optimal partial transport \cite{CLW3}, the semi-convexity of the free boundary plays a crucial role, which follows from the interior ball property \cite{CM,AF09,AFi}. 
The most striking point of this paper is that we manage to establish the higher order regularity of \( u_i \) up to the singular set \( \mathcal{F} \), which does not exhibit any form of convexity a priori, see Figure 1.1.

A crucial ingredient in the proof of Theorem \ref{main1} is the following obliqueness estimate:
\begin{proposition}\label{uniob}
Under the hypotheses of Theorem~\ref{main1}, assume that $0 \in \mathcal{F}$. Denote by $y_0 = Du_1(0) \in \partial\Omega^*_1$ and $\hat{y}_0 = Du_2(0) \in \partial\Omega^*_2$. Let $\nu, \nu^*, \hat{\nu}^*$ be the unit inner normals to $\Omega_1, \Omega_1^*, \Omega_2^*$ at $0, y_0, \hat{y}_0$, respectively. Then $\nu \cdot \nu^* > 0$ and $-\nu \cdot \hat{\nu}^* > 0$.
\end{proposition}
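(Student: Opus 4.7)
We argue by contradiction; by symmetry it suffices to prove $\nu\cdot\nu^*>0$. Normalize so that $0\in\mathcal{F}$, $\nu=e_n$, $u_1(0)=0$, $Du_1(0)=y_0$, and set $w(x):=u_1(x)-y_0\cdot x$. Then $w$ is convex on $\mathbb{R}^n$, $w\ge 0$, and $w(0)=Dw(0)=0$. The convexity of $\Omega^*_1$ provides a supporting hyperplane at $y_0$, which combined with $Du_1\in\overline{\Omega^*_1}$ a.e.\ (from \eqref{uidef}) yields
\begin{equation*}
\partial_{\nu^*}w(x)=(Du_1(x)-y_0)\cdot\nu^*\ge 0\quad\text{a.e.\ on }\mathbb{R}^n,
\end{equation*}
so $w$ is nondecreasing in the direction $\nu^*$.

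Suppose toward contradiction that $\nu\cdot\nu^*\le 0$. Integrating the above along the ray $R=\{-t\nu^*:t\ge 0\}$ gives $w(-t\nu^*)\le w(0)=0$, which together with $w\ge 0$ forces $w\equiv 0$ on $R$. As $w$ is convex and vanishes on the segment $R$, any subgradient $p\in\partial w(-t\nu^*)$ with $t>0$ must satisfy $p\cdot\nu^*=0$; hence at any differentiability point of $w$ on $R$, the gradient $Du_1$ lies on the tangent hyperplane to $\partial\Omega^*_1$ at $y_0$. Since $\Omega^*_1$ is uniformly convex, that tangent hyperplane meets $\overline{\Omega^*_1}$ only at $y_0$, so the constraint $Du_1\in\overline{\Omega^*_1}$ forces $Du_1\equiv y_0$ a.e.\ on $R$.

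In the strictly transverse case $\nu\cdot\nu^*<0$, the vector $-\nu^*$ has a strictly positive $\nu$-component, so $R$ enters $\Omega_1$ for small $t>0$. Since $u_1\in C^{1,\beta}(\Omega\cap\overline{\Omega_1})$ by Theorem \ref{t111}, the continuity of $Du_1$ then forces $Du_1\equiv y_0$ on an open subsegment of $R\cap\Omega_1$, contradicting the injectivity of $Du_1|_{\Omega_1}$ (which follows from Caffarelli's interior strict convexity, valid since $\Omega^*_1$ is convex). The remaining tangential case $\nu\cdot\nu^*=0$ is the principal difficulty: there $R$ may lie entirely in $\overline{\Omega_2}$ near $0$, so injectivity of $Du_1|_{\Omega_1}$ cannot be applied directly. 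In that scenario one combines the identity $\mathcal{F}=\{u_1=u_2\}\cap\Omega$, the $C^{1,\beta}$ regularity of $\mathcal{F}$ from Theorem \ref{t111}, and the Monge--Amp\`ere equation for $u_2$ on $\Omega_2$ (with the density bounds $\lambda^{-1}<f/g<\lambda$) to rule out an affine piece of $u_1$ with slope $y_0\in\partial\Omega^*_1$ propagating into $\Omega_2$ along a tangent direction of $\mathcal{F}$; the contradiction is extracted through a delicate sub-level-set/volume argument for $u_2-y_0\cdot x$ near $0$. This tangential case is the main obstacle in the proof of obliqueness.
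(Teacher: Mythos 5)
Your argument establishes only the easy half of the inequality, namely $\nu\cdot\nu^*\ge 0$, and then declares the case $\nu\cdot\nu^*=0$ to be ``the principal difficulty'' without actually proving it. That tangential case \emph{is} the proposition. The monotonicity observation $(Du_1-y_0)\cdot\nu^*\ge 0$ and the resulting exclusion of $\nu\cdot\nu^*<0$ are standard and are implicitly assumed in the paper (the proof of Proposition~\ref{oblique2} immediately reduces to ``suppose $e_n\cdot\nu^*=0$''), so the portion you wrote out carefully is not where the work lies. The sentence you offer for the remaining case --- rule out an affine piece of $u_1$ propagating into $\Omega_2$ via ``a delicate sub-level-set/volume argument for $u_2-y_0\cdot x$'' --- is a guess rather than a proof, and it does not resemble what the paper actually does.

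Concretely, what is missing: the paper's proof of the tangential case occupies all of Section~\ref{S4} and hinges on a trichotomy driven by the mutual geometry of the three normals $\nu,\nu^*,\hat\nu^*$ (Cases~\textbf{I}, \textbf{II}, \textbf{III}), none of which appears in your sketch. For Case~\textbf{I} one first makes the three normals mutually orthogonal by an affine map, extracts sub-level-set width estimates (Lemma~\ref{ide1}), a $(3+\epsilon)$-strict-convexity estimate for $v$ (Corollary~\ref{decayrate1}), a $C^{1,\frac32-\epsilon}$ bound on the defining function $\rho$ of $\mathcal F$ (Lemma~\ref{decayrate2}), and the ``above the tangent'' property (Lemma~\ref{saddle}); one then runs two nested blow-ups producing a cylindrical limit configuration and invokes the machinery of \cite{CLW3}. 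Case~\textbf{II} is a different contradiction between a width estimate along $\nu^*$ (using disjointness of $\Omega_1^*,\Omega_2^*$) and a scale-independent strict convexity bound for the rescaled $u_1$. Case~\textbf{III} reduces to Case~\textbf{I} after replacing one power estimate. Your proposal has none of this structure, does not use the separation of $\Omega_1^*$ and $\Omega_2^*$, does not use the uniform $C^2$-convexity of $\Omega_i^*$ quantitatively, and does not set up any normalization or blow-up. A second, smaller concern: even in your transverse case, the ray argument requires $R$ to enter $\Omega_1$ and then applies injectivity of $Du_1$ on $\Omega_1$; that is fine, but once you are in the genuinely interesting tangential regime the ray exits $\overline{\Omega_1}$ (the content of Lemma~\ref{saddle} is precisely that $\mathcal F$ bends to the $\Omega_1$ side of the $\nu^*$-axis), so the ``affine segment of $u_1$'' mechanism you describe cannot be run naively through the extended function from \eqref{uidef} without the quantitative estimates the paper develops.
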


\begin{figure}[h] \label{f1.1}
 \centering
 \includegraphics[scale=0.12]{1singular.png}
 \caption{The singular set $\mathcal{F}$ in non-convex domain $\Omega$. The function $v$ is $C^1$ near $y_0$, but has singularity in $\Omega^*_1$. }
\end{figure}

In previous works such as \cite{C96}, \cite{CLW1} and \cite{SY}, the obliqueness estimate was proved relying on the convexity of the domains. 
The exploration continued in \cite{CLW3}, where the authors investigated the optimal partial transport problem and successfully established the obliqueness estimate at points on the free boundary. 
Notably, even though the free boundary is not convex in general, the interior ball property implies the semi-convexity of the free boundary, which enables the authors to make substantial progress in \cite{CLW3}.

However, the interior ball property is not applicable to the problem considered in this paper. 
In fact, due to the absence of any variant of convexity of the singular set, all existing methods \cite{C96,CLW1,CLW3,SY} do not apply. 
Therefore, new observations and ideas are needed to establish the obliqueness estimate.  
In order to prove Proposition \ref{uniob}, it suffices to rule out the following three distinct scenarios:
\begin{itemize}
\item[]\textbf{Case I:} $\nu\cdot \nu^*=0,\ \hat\nu \cdot \hat\nu^* = 0$, and $\nu, \nu^*, \hat{\nu}^*$ are non-coplanar;
\item[]\textbf{Case II:} $\nu\cdot \nu^*=0,\ \hat\nu \cdot \hat\nu^* = 0$, and $\nu, \nu^*, \hat\nu^*$ are coplanar;
\item[]\textbf{Case III:} $\nu\cdot \nu^*=0,\ \hat\nu \cdot \hat\nu^* > 0,$
\end{itemize}
where $\hat\nu:=-\nu$ denotes the unit inner normal of $\Omega_2$ at $0.$
Although a detailed proof will be given in Section \ref{S4}, here we would like to summarise some innovative ideas for each case. 

\textit{For Case \textbf{I} and \textbf{III}}: The approach is to show the splitting behaviour of the singular set and $\partial\Omega_1^*$ during a blow-up procedure. The strategy unfolds as follows:
\begin{enumerate}
     \item By a proper affine transformation, we can make the three normals $\nu, \nu^*, \hat\nu^*$ mutually perpendicular to each other. By estimating the shape of sub-level sets of \(v\), we have a \((3+\epsilon)\)-uniform convexity estimate of \(v\) at \(y_0 = Du_1(0)\) and \(\hat y_0 = Du_2(0)\). Utilising the duality between \(u\) and \(v\), we then obtain \(C^{1,\frac{1}{2}-\epsilon}\) estimates of \(u_1\) and \(u_2\) at $0$. In the light of \eqref{gn1}, we derive a \(C^{1,\frac{1}{2}-\epsilon}\) estimate of \(\mathcal{F}\) at $0$. Additionally, by combining the shape estimate on the sub-level set of $v$ at $y_0$ with the tangential \(C^{1, 1-\epsilon}\) estimate of \(v\) at \(\hat y_0\) in the \(\nu^*\)-direction, we obtain a rather unexpected ``\emph{above the tangent}'' property, namely \(\mathcal{F}\cap\text{span}\{\nu, \nu^*\}\) is above its tangent plane at \(0.\)

    \item 
    By applying the first blow-up, we can obtain that $\mathcal{F}$ becomes flat in $n-2$ directions in the limit profile. Here, the term blow-up refers to the limit of a sequence of sets normalised by some affine transformations. The shape estimates for the sub-level set of $v$ control these affine transformations, which surprisingly provide just enough information to confirm the flatness of $\mathcal{F}$ in $n-2$ directions in the limit. This success is largely attributable to the $C^{1,\frac{1}{2}-\epsilon}$ estimate of $\mathcal{F}$ at $0$. The ``\emph{above the tangent}'' property then guarantees that the limit domains are well-positioned. 

    \item 
    By applying the second blow-up, the limit of $\Omega_1$ becomes a cylindrical shape, and the limit of $\partial\Omega_1^*$ becomes the graph of a non-negative quadratic polynomial. Subsequently, after the secondary blow-up, we can employ techniques from \cite{CLW1, CLW3} to demonstrate the transformation of both $\Omega_1$ and $\Omega_1^*$ into cylindrical configurations, analogous to the scenarios depicted in \cite{CLW1}.
\end{enumerate}

\textit{For Case \textbf{II}}: Observe that $\nu^*$ is parallel to $\hat\nu^*$. We can prove the width of the centred section of $v$ in the $\nu^*$-direction is very small. On one hand, by combining this width estimate with \eqref{gn1} and the disjointness of $\Omega_1^*$ and $\Omega_2^*$, we can show that $\mathcal{F}$ becomes progressively flatter in the $\nu^*$-direction during the blow-up process. On the other hand, we establish a strict convexity estimate for the normalisation of $u_1$ that is independent of the blow-up scale, which however contradicts the flattening of $\mathcal{F}$.

The new ideas and methods developed in this paper are also useful for investigating many other interesting problems. In \cite{CL2}, we applied these methods to establish the obliqueness estimate at the intersection points of the free and fixed boundaries, which leads to the sharp global Sobolev regularity of the optimal map between active regions. More recently, in \cite{CLL}, we utilised these methods to investigate the partial regularity of free boundaries in optimal partial transportation between non-convex polygonal domains. In particular, we demonstrate that the free boundary is smooth except at a finite number of singular points. Additionally, in \cite{CLL}, we use these methods to provide a precise description of the singular set of the optimal map between non-convex polygonal domains, by showing that the singular set is a smooth curve away from a finite number of points.

The structure of this paper is organised as follows: Section \ref{S2} introduces important notations and definitions used for the subsequent discussions. Section \ref{S3} is dedicated to establishing the $C^{1,\beta}$ regularity of the singular set $\mathcal{F}$. Section \ref{S4} focuses on the proof of the obliqueness estimate, a crucial step in our analysis. Finally, Section \ref{S5} outlines the proof of the $C^{2,\alpha}$ regularity of the singular set $\mathcal{F}$.

\vskip5pt 

\noindent\emph{Acknowledgements:} We sincerely appreciate Professor Xu-Jia Wang for introducing this problem to us and for many inspiring discussions.

\section{Preliminaries}\label{S2} 

Given a convex function $w: \mathbb{R}^n \rightarrow \mathbb{R}$, its subdifferential at $x \in \mathbb{R}^n$ is defined as:
\begin{equation}
\partial^{-}w(x) := \{y \in \mathbb{R}^n : w(z) \geq w(x) + y \cdot (z - x) \ \text{ for all} \ z \in \mathbb{R}^n\}.
\end{equation}
For any subset $A\subset\mathbb{R}^n$, $\partial^{-}w(A):=\cup_{x\in A}\partial^{-}w(x)$.

Let $U \subset \mathbb{R}^n$ be an open set and $C$ be a positive constant. 
We say that a convex function $w$ satisfies the Monge-Ampère inequality:
\begin{equation}
\frac{1}{C} \chi_{_U} \leq \det\, D^2w < C \chi_{_U}
\end{equation}
in the Alexandrov sense if and only if for any Borel set $A$,
\begin{equation}\label{aama}
\frac{1}{C} |A \cap U| \leq |\partial^-w(A)| \leq C |A \cap U|,
\end{equation}
where $|\cdot|$ denotes the $n$-dimensional volume. 

Now,
let us consider the functions and sets $u, v, u_i$ and $\Omega_i, \Omega^*_i$ for $i=1, 2$ as delineated in the introduction. 
Specifically, we have the conditions \eqref{condi1}--\eqref{condi3} and $u_i$ satisfies \eqref{uidef}. 
It is important to note that $\Omega$ can be expressed as the union $\Omega = \Omega_1\cup \Omega_2\cup\mathcal{F}$, and within each $\Omega_i$, $i=1,2$, we have $u_i = u$. 
Additionally, we have the following dual relationships:
\begin{align}
	 v(y) &= \sup_{x\in \Omega}\{y\cdot x - u(x)\} \quad \text{ for all } y\in \mathbb{R}^n; \label{dual111} \\
	 u_i(x) &= \sup_{y\in \Omega_i^*}\{y\cdot x - v(y)\} \quad \text{ for all } x\in \mathbb{R}^n,\ \ i=1,2. \label{dual222}
\end{align} 

Given that $\Omega^*_i$ are convex for $i=1, 2$, Caffarelli's regularity theory \cite{C92} ensures that $u_i\in C^{1,\beta}_{\text{loc}}(\Omega_i)$ for some $\beta\in(0,1)$, and they satisfy the Monge-Ampère inequality in the sense of Alexandrov:
\begin{equation}\label{Asolv} 
    C^{-1}\chi_{\Omega_i} \leq \det\, D^2 u_i \leq C\chi_{\Omega_i} \quad \text{ for } i=1, 2.
\end{equation}
Lastly, we note that $\mathcal{F}=\{x\in\Omega: u_1(x)=u_2(x)\}$.

To proceed further, we introduce some useful definitions and notations.
Let $w:\mathbb{R}^n\rightarrow \mathbb{R}$ be a convex function whose graph does not contain any infinite straight line.

\begin{definition}\label{defS}
Given a point $y_0\in \mathbb{R}^n$ and a small positive constant $h$, we define the centred sections of the convex function $w$ at $y_0$ with height $h$ as:
\begin{equation}\label{sect}
S^c_{h}[w](y_0) := \left\{y\in\mathbb{R}^n : w(y) < w(y_0) + (y-y_0)\cdot \bar{p} + h\right\},
\end{equation}
where $\bar{p}\in \mathbb{R}^n$ is chosen such that the centre of mass of $S^c_{h}[w](y_0)$ coincides with $y_0$.
Additionally, we define the sub-level set of $w$ at $y_0$ with height $h$ as:
\begin{equation}\label{sub}
S_h[w](y_0) := \left\{y\in \mathbb{R}^n : w(y) < \ell_{y_0}(y) + h\right\},
\end{equation}
where $\ell_{y_0}$ denotes an affine supporting function of $w$ at $y_0$.
\end{definition}

\begin{remark}
\emph{For the existence of $\bar{p}$ in \eqref{sect}, we refer the reader to \cite{C92b}.
If $w(0)=0$ and $w\geq 0$, by \cite[Remark 2.5]{CLW3}, we have that 
\begin{equation}\label{secrela}
w \leq Ch \quad \text{in}\ S^c_h[w](0),
\end{equation}
where $C$ is a constant depending only on $n$. }
\end{remark}

In subsequent discussions, a constant will be referred to as universal if it depends solely on the parameters $n, \Omega, \Omega^*, \lambda$, and $\text{dist}(x_0, \partial\Omega)$, where $x_0$ is a generic point of $\mathcal{F}$. Additionally, we will employ the notation $C_n$ to represent a positive constant dependent exclusively on the dimension $n$, with the acknowledgment that its value may change across different contexts. The vectors \( e_i \), where \( i = 1, \ldots, n \), will be used to denote the standard basis vectors in the coordinate system.

\section{$C^{1,\beta}$ regularity of $\mathcal{F}.$}\label{S3}

In this section, we continue to utilise the notations established in Section \ref{S2} for $\Omega, \Omega^*, \Omega^*_i$ $(i=1,2)$, $u, v, u_i, \mathcal{F}$. Consider a specific point $x_0\in\mathcal{F} \subset \Omega$. Select sequences $\{x_k\} \subset \Omega_1$ and $\{\hat{x}_k\} \subset \Omega_2$, both converging to $x_0$ as $k \to \infty$. By applying a translation of coordinates, we may assume $x_0 = 0$.
Without loss of generality, and potentially passing to subsequences, we assume $\lim_{k\to\infty}Du(x_k)=y_0 \in \overline{\Omega^*_1}$, and $\lim_{k\to\infty}Du(\hat{x}_k)=\hat{y}_0 \in \overline{\Omega^*_2}$, respectively. This ensures that $y_0, \hat{y}_0 \in \partial^{-}u(0).$ By \eqref{dual111}, we deduce that $0\in\partial^{-}v(y_0) \cap \partial^{-}v(\hat{y}_0)$.

By subtracting an appropriate constant, we may assume $v(y_0)=v(\hat{y}_0)=0$ and $v\geq0$. Define $\Sigma = \{y\in\mathbb{R}^n : v(y) = 0\}$ and denote the set of extreme points of \( \Sigma \) as \( \text{ext}(\Sigma) \).  

Recall Equation \eqref{condi2}, especially $Dv(y) \in \overline{\Omega}$ for almost every $y\in \mathbb{R}^n.$ 
Consider a section of \(v\), namely \(Z:=\{y \in \mathbb{R}^n : v(y) < \ell(y)\}\), with \(\ell\) being an affine function. Provided that \(Z\) is bounded and contains a point \(y\) such that \(0 \in \partial^{-}v(y)\), since \(0\in \mathcal{F}\) is inside the interior of \(\Omega,\)
 we can assert that \(\partial^{-}v(y) \cap \Omega \neq\emptyset\). 
 Denoting the centre of mass of \(Z\) by \(z\), and employing John's lemma, we identify an affine transformation \(L\) such that $L(z)=0$ and \(B_1(0) \subset LZ \subset B_{C_n}(0)\), where \(C_n\) solely depends on the dimension \(n\). 

From \cite[Inequality (4)]{FK1}, we deduce that:
\begin{equation}\label{la1}
|\inf_{Z} (v-\ell)|^n \geq C_1 \left| \left( \frac{1}{2}Z \right)_{z} \cap \Omega^* \right| |Z|,
\end{equation}
where $\left(\frac{1}{2}Z\right)_{z}$ represents the dilation of $Z$ with respect to $z$, and $C_1$ is a constant depending on $n$ and $\lambda$.
From \cite[Proposition 1]{FK1}, we have the following bound:
\begin{equation}\label{ua1}
|v(y) - \ell(y)|^n \leq C_2 |Z \cap \Omega^*||Z| \, \text{dist}\big(L(y), \partial(L(Z))\big) \quad\mbox{for } y\in Z,
\end{equation}
where $C_2$ is a constant depending only on $n$, $\lambda$, and the diameter of $\Omega$.

\begin{lemma}\label{sc111}
The set $\Sigma$ is both bounded and closed, and $\text{\normalfont ext}(\Sigma)\subset \partial\Omega^*$.
\end{lemma}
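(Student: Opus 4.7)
The plan is to identify $\Sigma$ explicitly as the subdifferential of $u$ at the origin, and then invoke Caffarelli's regularity theory for the maps $u_i:\Omega_i\to\Omega_i^{*}$. First, I would exploit the chosen normalisation to show $\Sigma=\partial^{-}u(0)$. Since $y_0\in\partial^{-}u(0)$, the Fenchel equality yields $v(y_0)=y_0\cdot 0-u(0)=-u(0)$, and the normalisation $v(y_0)=0$ forces $u(0)=0$. Then $v(y)=0$ iff $\sup_{x}\{y\cdot x-u(x)\}=0$ iff $u(x)\ge y\cdot x$ for every $x\in\mathbb{R}^n$ iff $y\in\partial^{-}u(0)$, giving $\Sigma=\partial^{-}u(0)$.

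Closedness of $\Sigma$ is immediate from the continuity of $v$. Boundedness follows at once from the global Lipschitz regularity of $u$: indeed, by \eqref{condi1}, $\Sigma=\partial^{-}u(0)\subset\mathrm{conv}(\overline{\Omega^{*}})$, which is compact.

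For the extreme points, I would invoke the classical characterisation that each extreme point of the subdifferential of a Lipschitz convex function arises as a limit $p=\lim_{k\to\infty}Du(x_k)$ for some sequence $x_k\to 0$ along which $u$ is classically differentiable, and moreover one is free to choose these $x_k$ from any full-measure subset of differentiability points. Since $\mathcal{F}$ is a null Lipschitz hypersurface and $u$ is differentiable throughout $\Omega_1\cup\Omega_2$ by Caffarelli, we may assume $x_k\in\Omega_i$ for some fixed $i\in\{1,2\}$, whence $p\in\overline{\Omega_i^{*}}$. It then suffices to exclude $p\in\Omega_i^{*}$. Suppose for contradiction $p\in\Omega_i^{*}$. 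By Caffarelli's theorem applied to $u_i$ on $\Omega_i$ (targeting the convex domain $\Omega_i^{*}$), $u_i$ is strictly convex and $C^{1,\beta}_{\text{loc}}(\Omega_i)$, so $Du_i$ is a homeomorphism from $\Omega_i$ onto $\Omega_i^{*}$. Hence there is a unique $x^{*}\in\Omega_i$ with $Du_i(x^{*})=p$, and continuity of $(Du_i)^{-1}$ on the open target $\Omega_i^{*}$ forces $x_k=(Du_i)^{-1}(Du_i(x_k))\to x^{*}\in\Omega_i$. This contradicts $x_k\to 0\in\mathcal{F}\subset\partial\Omega_i$. Therefore $p\in\partial\Omega_i^{*}\subset\partial\Omega^{*}$.

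The main subtlety is this last step: translating a gradient-cluster statement at the singular boundary point $0$ into a genuine contradiction requires precisely Caffarelli's strict convexity of $u_i$, which disqualifies interior targets. Without this input, one could only conclude $\text{ext}(\Sigma)\subset\overline{\Omega^{*}}$, which is strictly weaker than the desired conclusion.
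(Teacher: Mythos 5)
Your identification $\Sigma=\partial^{-}u(0)$ is correct and yields a cleaner proof of boundedness than the paper's half-line argument (though note that the paper's \eqref{dual111} takes the supremum over $\Omega$, not $\mathbb{R}^n$, so you should add the observation that a globally convex $u$ satisfying $u(x)\ge y\cdot x$ on a neighborhood of $0$ satisfies it everywhere by restricting to rays through $0$). The reachable-gradient step is also a genuinely nice idea: combined with Milman's theorem and the Clarke-type fact that the approximating differentiability points may be taken outside any Lebesgue-null set (so outside $\mathcal{F}$), it gives $\text{ext}(\Sigma)\subset\overline{\Omega_1^*}\cup\overline{\Omega_2^*}$ directly and so disposes automatically of the case $p\in\mathbb{R}^n\setminus\overline{\Omega^*}$, which the paper must handle separately via the Alexandrov estimates \eqref{la1}--\eqref{ua1} and the argument in Section 3.1 of \cite{FK1}.

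The gap is in the final step, where you assert that Caffarelli's interior theory makes $Du_i$ a homeomorphism from $\Omega_i$ \emph{onto} $\Omega_i^*$ with continuous inverse on all of $\Omega_i^*$. What Caffarelli gives is that $u_i\in C^{1,\beta}_{\text{loc}}(\Omega_i)$ and is strictly convex in $\Omega_i$; by invariance of domain this makes $Du_i$ a homeomorphism onto its \emph{image}, which is an open subset of $\Omega_i^*$ of full $g$-measure. But because $\Omega_i$ is not assumed convex, one cannot apply Caffarelli's theory to the dual potential $u_i^*$ on $\Omega_i^*$, and it is not immediate that the image is all of $\Omega_i^*$ rather than a proper open full-measure subset; if $p\in\Omega_i^*\setminus Du_i(\Omega_i)$, the contradiction ``$x_k\to(Du_i)^{-1}(p)\in\Omega_i$'' is unavailable and the proof stalls. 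The paper avoids this by working with the dual potential $v$ near $p$: since $p\in\Omega^*$ and $0\in\partial^{-}v(p)\cap\Omega$, \cite[Proposition~2]{FK1} applies (its hypotheses only require $p$ to be in the open target and the subdifferential to meet the source domain, no homeomorphism statement needed), giving that $v$ is $C^1$ and strictly convex near $p$, hence $u$ is differentiable at $0$, contradicting $0\in\mathcal{F}$. Replacing your homeomorphism step with this citation closes the gap while keeping the rest of your (otherwise more streamlined) argument intact.
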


\begin{proof}
First, we prove that $\Sigma$ is bounded and closed. Assume, to the contrary, that this is not the case. 
By convexity, $\Sigma$ would then contain a half-line of the form $\{q + te : t \geq 0\}$ for some $q \in \Sigma$ and $e \in \mathbb{S}^{n-1}$. For any $y \in \mathbb{R}^n$ and $x \in \partial^{-}v(y)$, by the convexity of $v$ one has 
	$$ (x - 0) \cdot (y - q - te) \geq 0. $$ 
As $t$ tends to infinity, this implies that $x \cdot e \leq 0$. 
Given the arbitrariness of $y$, it follows that 
	$$\partial^{-} v(\mathbb{R}^n) \subseteq \{x \cdot e \leq 0\}.$$ 
Since we have $(Dv)_{\sharp}g = f$, it can be deduced that $\Omega \subseteq \partial^{-} v(\mathbb{R}^n) \subseteq \{x \cdot e \leq 0\},$ which contradicts the fact that $0 \in \Omega$. Thus, $\Sigma$ must be bounded and closed.

Next, we prove that $\text{ext}(\Sigma) \subseteq \partial\Omega^*$. Suppose to the contrary that there exists a point $p \in \text{ext}(\Sigma)$ such that $p$ is either in $\Omega^*$ or in $\mathbb{R}^n \setminus \overline{\Omega^*}$. If $p \in \text{ext}(\Sigma) \cap \Omega^*$, then we have $0 \in \partial^{-}v(p) \cap \Omega$. Referring to Proposition 2 and the subsequent discussion in \cite{FK1}, $v$ is inferred to be $C^1$ and strictly convex in a neighborhood of $p$. Consequently, by duality, the function $u$ must be differentiable at $0$, leading to a contradiction. In the case where $p \in \text{\normalfont ext}(\Sigma) \cap (\mathbb{R}^n \setminus \overline{\Omega^*})$, the application of inequalities \eqref{la1} and \eqref{ua1}, along with the arguments presented in Section 3.1 of \cite{FK1}, also yields a contradiction. Hence, we conclude that $\text{ext}(\Sigma) \subset \partial\Omega^*$.
\end{proof}

Since exposed points are dense in the set of extreme points, we can choose points 
	\begin{equation}\label{exppt}
		p\in\text{ext}(\Sigma) \cap \partial\Omega^*_1 \quad \text{ and }\ \  \hat{p} \in  \text{ext}(\Sigma) \cap \partial\Omega^*_2
	\end{equation} 
 such that both are exposed points of \( \Sigma \). In this context, we establish the following localisation properties.

\begin{lemma}\label{locc1}
Given any \( r_0 > 0 \), there exists an \( h_0 > 0 \) such that for all \( h \leq h_0 \), the inclusion \( S^c_h[v](p) \subset B_{r_0}(p) \) holds,  where $p$ is as in \eqref{exppt}. 
\end{lemma}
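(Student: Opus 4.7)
The plan is to argue by contradiction, using the fact that $p$ is an exposed point of the convex set $\Sigma$. Suppose the conclusion fails: there exist $r_0 > 0$, a sequence $h_k \downarrow 0$, and points $y_k \in S^c_{h_k}[v](p)$ with $|y_k - p| \geq r_0$. Write each section as
\[
S^c_{h_k}[v](p) = \{y : v(y) < \bar p_k \cdot (y - p) + h_k\},
\]
where $\bar p_k$ is the corresponding centring slope, so that $p$ is the centroid.

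First I would set up compactness. Since $u$ is Lipschitz on the bounded source $\Omega$, its dual $v$ grows at least linearly at infinity, so for each section to be bounded we must have $\bar p_k \in \overline{\Omega}$. Passing to subsequences, $\bar p_k \to \bar p_\infty \in \overline{\Omega}$ and $S^c_{h_k}[v](p) \to K_\infty$ in the Hausdorff distance, where $K_\infty$ is a compact convex set containing $p$ together with a limit point $y_\infty$ satisfying $|y_\infty - p| \geq r_0$.

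Next I would show $K_\infty \subset \Sigma$. For each $y \in K_\infty$, taking limits in $v(y_k) < \bar p_k \cdot (y_k - p) + h_k$ with $y_k \in S^c_{h_k}[v](p) \to y$ yields $v(y) \leq \bar p_\infty \cdot (y - p)$. Combined with $v \geq 0$, this places $K_\infty$ inside the closed half-space $H := \{\bar p_\infty \cdot (y - p) \geq 0\}$, whose bounding hyperplane passes through $p$. The centring condition, transported to the limit via John normalisation of each $S^c_{h_k}[v](p)$, shows that $p$ lies in the relative interior of $K_\infty$; a set sitting in $H$ with its relative centroid on the boundary hyperplane must lie entirely in $\{\bar p_\infty \cdot (y - p) = 0\}$. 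Hence $v \equiv 0$ on $K_\infty$, so $K_\infty \subset \Sigma$.

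Finally, since $p$ is an exposed, hence extreme, point of $\Sigma$ and $K_\infty \subset \Sigma$ contains $p$, the point $p$ is extreme also in $K_\infty$. An extreme point of a convex set cannot lie in its relative interior unless the set is a singleton, so $K_\infty = \{p\}$, contradicting $|y_\infty - p| \geq r_0 > 0$. The main obstacle is the middle step: controlling the possibly degenerate Hausdorff limit of the sections as $h_k \downarrow 0$. This is handled by applying John's lemma to each $S^c_{h_k}[v](p)$ and using the Monge--Amp\`ere volume estimates \eqref{la1}--\eqref{ua1} (with $Z_k = S^c_{h_k}[v](p)$ and $\ell_k(y) = \bar p_k \cdot (y - p) + h_k$, noting $\inf_{Z_k}(v - \ell_k) \leq -h_k$ and $\mathrm{dist}(L_k(p), \partial L_k(Z_k)) \geq 1$) to control the anisotropic collapse and guarantee that the centring identity descends to the limit in the relative-interior form required above.
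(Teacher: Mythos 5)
Your proposal is correct in spirit but takes a genuinely different route from the paper, and is essentially the strategy the paper reserves for the harder Lemma~\ref{locc2}. The paper's own proof of Lemma~\ref{locc1} is direct and avoids any compactness argument: it uses the exposing hyperplane of $p$ to find $\delta_0>0$ with $v>\delta_0$ on $\{(y-p)\cdot e\ge 0\}\setminus B_{r_0/C_n}(p)$, combines this with the bound $v\le C_n h$ on $S^c_h[v](p)$ from \eqref{secrela}, and finishes with the balance of the centred section — no contradiction, no limit. Your argument instead extracts a Hausdorff limit $K_\infty$ of centred sections, shows $K_\infty\subset\Sigma$, and invokes extremality of $p$ (which is weaker than exposedness). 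This works, but two points deserve care. First, the ``controlling the anisotropic collapse'' step you handle with John's lemma and \eqref{la1}--\eqref{ua1} is over-engineered: the relevant fact is that the centroid balance $-\tfrac1n(S-p)+p\subset S$ holds for each centred section and passes directly to Hausdorff limits, yielding $-\tfrac1n(K_\infty-p)+p\subset K_\infty$. That is exactly what forces $K_\infty$ into the bounding hyperplane of $H$ (so $v\equiv 0$ on $K_\infty$) and what contradicts extremality if $K_\infty\ne\{p\}$ — no Monge--Amp\`ere volume estimates are needed for this lemma. Second, the claim that the sections are uniformly bounded so a Hausdorff subsequential limit exists is not automatic from Lipschitz continuity of $u$; it requires the coercivity of $v$ (which follows from the compactness of $\Sigma$ established in Lemma~\ref{sc111}) together with the balance of the centred sections, as is carried out explicitly in the paper's proof of Lemma~\ref{locc2}. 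With those two repairs your argument is sound; what it buys over the paper's proof is that it only uses extremality of $p$, while what it costs is significantly more machinery for a statement the paper dispatches in a few lines.
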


\begin{proof}
Since \( p \) is an exposed point of \( \Sigma \), there exists a unit vector $e\in\mathbb{S}^{n-1}$ such that 
	\begin{align*} 
		& \Sigma \subset \{ y : (y - p) \cdot e \leq 0 \}, \\
		& \Sigma \cap \{ y : (y - p) \cdot e = 0 \} = \{ p \}.
	\end{align*} 
Since $S^c_h[v](p)$ is centred at $p$, if $S^c_h[v](p) \cap \{ y : (y - p) \cdot e \geq 0 \} \subset B_r(p) $, then it follows that $S^c_h[v](p) \subset B_{C_n r}(p)$.

Observing that \( v > 0 \) in \( \mathbb{R}^n \setminus \Sigma \) and given any small constant \( r_0>0 \), the convexity and continuity of \( v \) ensure the existence of a constant \( \delta_0 \) such that
\begin{equation}\label{pfloc1}
v > \delta_0 \quad \text{in} \ \ \{ y : (y - p) \cdot e \geq 0 \} \setminus B_{\frac{r_0}{C_n}}(p).
\end{equation}

By \eqref{secrela} we have $v \leq C_n h$ in $S^c_h[v](p)$. Setting \( h_0 = \frac{\delta_0}{C_n} \), for all \( h \leq h_0 \), we then have \( v \leq \delta_0 \) in \( S^c_h[v](p) \). From \eqref{pfloc1}, it follows that
\[ S^c_h[v](p) \cap \{ y : (y - p) \cdot e \geq 0 \} \subset B_{\frac{r_0}{C_n}}(p) \quad \forall\ h \leq h_0. \]
Therefore, we conclude that \( S^c_h[v](p) \subset B_{r_0}(p) \) whenever \( h \leq h_0 \).
\end{proof}

The next lemma establishes the localisation of centred sections near the point $p$. 

\begin{lemma}\label{locc2}
For any given \( \frac{1}{2} > r_0 > 0 \), let \( h_0 \) be the constant specified in Lemma \ref{locc1}. There exists a positive constant \( \delta_0 > 0 \) such that for all points \( y \in \overline{\Omega_1^*} \cap B_{\delta_0}(p) \) and for all \( h \leq h_0 \), the inclusion \( S_h^c[v](y) \subset B_{2r_0}(p) \) holds,  where $p$ is as in \eqref{exppt}. 
\end{lemma}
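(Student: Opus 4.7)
The plan is to argue by contradiction: if the conclusion fails, there exist sequences $y_k\in\overline{\Omega_1^*}$ with $y_k\to p$, heights $h_k\in(0,h_0]$, and points $z_k\in S^c_{h_k}[v](y_k)$ with $|z_k-p|\ge 2r_0$. Writing the defining affine function of the $k$-th section as $\ell_k(y)=v(y_k)+\bar p_k\cdot(y-y_k)$, the first step is to show that the slopes $\{\bar p_k\}$ are uniformly bounded. Since $v\ge 0$, the section is contained in the half-space $\{(y-y_k)\cdot \bar p_k > -v(y_k)-h_k\}$; combined with Minkowski's centroid theorem (for any convex body $K\subset\mathbb R^n$ with centroid at the origin, $-K/n\subset K$), this confines the section to a slab of width $O(1/|\bar p_k|)$ around $y_k$ perpendicular to $\bar p_k$. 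If $|\bar p_k|\to\infty$ the slab collapses, contradicting the positive $n$-dimensional volume of the open section. Passing to a subsequence, one then obtains $\bar p_k\to\bar p$, $h_k\to\bar h\in[0,h_0]$, $z_k\to z_\infty$ with $|z_\infty-p|\ge 2r_0$, and $\ell_k\to\bar\ell$ locally uniformly, where $\bar\ell(y)=\bar p\cdot(y-p)$.

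If $\bar h>0$, the sets $\{v<\ell_k+h_k\}$ are uniformly bounded (as $v\to\infty$ at infinity, being the Legendre transform of the Lipschitz function $u$) and the limit $\{v<\bar\ell+\bar h\}$ has non-empty interior (since $v(p)=0<\bar h$), so the standard Hausdorff continuity for sub-level sets of convex functions applies: $S^c_{h_k}[v](y_k)\to \{v<\bar\ell+\bar h\}$. The barycenter of the limit equals $\lim y_k=p$, so the limit is precisely a centred section of $v$ at $p$ with height $\bar h\le h_0$, hence contained in $B_{r_0}(p)$ by Lemma~\ref{locc1}; Hausdorff convergence then yields $S^c_{h_k}[v](y_k)\subset B_{3r_0/2}(p)$ for large $k$, contradicting $|z_k-p|\ge 2r_0$.

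The genuinely delicate case is $\bar h=0$, where the limit set $\{v<\bar\ell\}$ may be empty or lower-dimensional and pure continuity fails. Here I would again apply Minkowski's centroid theorem, now to $K=S^c_{h_k}[v](y_k)-y_k$, to conclude that $\tilde z_k:=y_k-\frac{1}{n}(z_k-y_k)$ also lies in the section. Passing to the limit in the defining inequality $v<\ell_k+h_k$ along the segment $[y_k,z_k]$ and at $\tilde z_k$ yields
\begin{align*}
v\bigl((1-t)p+tz_\infty\bigr) &\le t\,\bar p\cdot(z_\infty-p),\qquad t\in[0,1],\\
v(\tilde z_\infty) &\le -\tfrac{1}{n}\,\bar p\cdot(z_\infty-p),
\end{align*}
where $\tilde z_\infty=p-\tfrac{1}{n}(z_\infty-p)$. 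If $\bar p\cdot(z_\infty-p)>0$, the second inequality contradicts $v\ge 0$; if $\bar p\cdot(z_\infty-p)<0$, the first at $t=1$ does; and if $\bar p\cdot(z_\infty-p)=0$, both $z_\infty$ and $\tilde z_\infty$ lie in $\Sigma$, placing $p=\tfrac{n}{n+1}\tilde z_\infty+\tfrac{1}{n+1}z_\infty$ in the interior of a non-trivial segment inside $\Sigma$, which contradicts the fact (from Lemma~\ref{sc111}) that $p$ is an exposed, hence extreme, point of $\Sigma$.
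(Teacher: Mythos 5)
Your overall strategy — compactness, passage to a limit, and a case split on whether the limiting height $\bar h$ is zero or positive — is the same as the paper's, and the case $\bar h = 0$ (which you correctly identify as the delicate one) is handled in essentially the paper's way: produce a non-trivial segment centred at $p$ on which $v\le$ (linear), hence $v\equiv 0$, contradicting the exposedness of $p$ in $\Sigma$. However, there is a genuine gap in your argument for bounding the slopes $\bar p_k$. You claim that if $|\bar p_k|\to\infty$, the Minkowski slab around $y_k$ collapses, ``contradicting the positive $n$-dimensional volume of the open section.'' This does not follow: each section has positive volume, but there is no lower bound uniform in $k$ (indeed the volume shrinks when $h_k\to 0$), and a collapsing slab can perfectly well contain a nonempty open convex body that is simply long and thin in a direction orthogonal to $\bar p_k$. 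The correct — and much simpler — argument is the one in the paper: $v$ is globally Lipschitz (with $Dv\in\overline{\Omega}$ a.e.), and if $|\bar p_k|>\|v\|_{\mathrm{Lip}}$ then along the ray from $y_k$ in the direction $\bar p_k/|\bar p_k|$ the affine function $\ell_k+h_k$ grows strictly faster than $v$, so the set $\{v<\ell_k+h_k\}$ is unbounded, which is impossible for a centred section. Thus $|\bar p_k|\le\|v\|_{\mathrm{Lip}}$.

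Two smaller loose ends, both fixable but worth flagging. First, you write $z_k\to z_\infty$ without ensuring $\{z_k\}$ is bounded; you should replace $z_k$ by the intersection of the segment $[y_k,z_k]$ with $\partial B_{2r_0}(p)$ (still in the section by convexity), so $|z_k-p|=2r_0$ exactly. Second, in the case $\bar h>0$ your justification for uniform boundedness of the sections (``$v\to\infty$ at infinity, being the Legendre transform of the Lipschitz $u$'') is too loose: $v$ grows only linearly, so boundedness of $\{v<\ell_k+h_k\}$ depends on the slope of $\ell_k$ relative to the growth rate of $v$. The paper proves a clean uniform bound by choosing $R$ with $v> h_0+\sup_{B_1(p)}v$ outside $B_R(p)$, then using the balancing of the section at $y_k$ to conclude $S^c_{h_k}[v](y_k)\subset B_{3C_nR+1}(p)$; with the Lipschitz bound on $\bar p_k$ in hand you can reproduce that argument verbatim. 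Once these points are patched, your proof coincides with the paper's.
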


\begin{proof}
Assume, for the sake of contradiction, that the statement is false. Then, there exist sequences \( \{y_k\} \subset \overline{\Omega^{*}_1} \) converging to \( p \) and \( \{h_k\} \) with \( h_k \leq h_0 \), such that 
\begin{equation}\label{cseg1}
S_{h_k}^c[v](y_k) \cap \left(\mathbb{R}^n \setminus B_{2r_0}(p)\right) \neq \emptyset.
\end{equation}
Given that \( S_{h_k}^c[v](y_k) \) is balanced with respect to \( y_k \), there exists a line segment \( I_k \subset S_{h_k}^c[v](y_k) \) centred at \( y_k \) satisfying \( |I_k| \geq C_n r_0 \).
Without loss of generality, by taking a subsequence if necessary, we can assume that \( h_k \to \bar{h} \leq h_0 \) and \( I_k \to I_\infty \) as \( k \to \infty \), where \( I_\infty \) is a segment centred at \( p \) and satisfying
\begin{equation}\label{cseg2}
|I_\infty| \geq C_n r_0.
\end{equation}

First, we claim that $S_{h_k}^c[v](y_k)$ is uniformly bounded. 
From Lemma \ref{sc111}, $\Sigma$ is compact. 
Owing to the non-negativity and convexity of $v$, we deduce the existence of a large constant \( R > 1 \) satisfying
\[ v(y) > h_0 + \sup_{B_1(p)} v \quad \forall\, y \in \mathbb{R}^n \setminus B_R(p). \]

Suppose \( S_{h_k}^c[v](y_k) = \{y \in \mathbb{R}^n : v(y) < \ell_k\} \), for some affine function \( \ell_k \)  that fulfils \( \ell_k(y_k) = v(y_k) + h_k \). For any given unit vector \( e \in \mathbb{S}^n \), we may assume $\ell_k$ is decreasing in the $e$-direction, otherwise consider $-e$ instead.

Since \( y_k \) converges to \( p\), we may assume that \(y_k\in B_1(p) \) for all $k.$
It can be verified that \( y_k + 3Re \in \mathbb{R}^n \setminus B_R(p) \), leading to the inequality
\[ v(y_k + 3Re) > h_0 + \sup_{B_1(p)} v \geq \ell_k(y_k)\geq \ell_k(y_k+3Re). \]
This implies \( y_k + 3Re \notin S_{h_k}^c[v](y_k) \). 
Since \( S_{h_k}^c[v](y_k) \) is centred at \( y_k \), we deduce \( y_k - 3C_nRe \notin S_{h_k}^c[v](y_k) \). Given that \( e \) is arbitrary, we conclude the claim that 
\begin{equation} 
	S_{h_k}^c[v](y_k) \subset B_{3C_nR}(y_k) \subset B_{3C_nR+1}(p).
\end{equation}

Now, let's consider a subsequence (without changing notation) such that \( S_{h_k}^c[v](y_k) \) converges to a limit convex set \( S_\infty \) in the Hausdorff distance, and $h_k\to\bar h\leq h_0$ as $k\to\infty$. 
Write \( \ell_k = x_k \cdot (y - y_k) + h_k \) with \( x_k \in \mathbb{R}^n \), the global Lipschitz continuity of \( v \) ensures that \( \sup_k |x_k| \leq \|v\|_{\text{Lip}} < \infty \). Consequently, after possibly taking another subsequence, we can assert that \( x_k \to \bar{x} \) as \( k \to \infty \).

In the case when \( \bar{h} > 0 \), it follows that
\[
S_\infty = \left\{ y \in \mathbb{R}^n : v(y) < \bar{x} \cdot (y - p) + \bar{h} \right\} = S^c_{\bar{h}}[v](p).
\]
By \eqref{cseg1}, passing to limit we have
\[
S^c_{\bar{h}}[v](p) \cap \left( \mathbb{R}^n \setminus B_{2r_0}(p) \right) \neq \emptyset,
\]
leading to a contradiction with Lemma \ref{locc1}.

In the case when \( \bar{h} = 0 \), we observe that \( 0 \leq v \leq \bar{x} \cdot (y - p) \) for all \( y \) in \( I_\infty \). Given that \( I_\infty \) is centred at \( p \), it follows that \( v \) must be identically zero on \( I_\infty \), implying that \( I_\infty \subseteq \Sigma \). This contradicts to the fact that $p$ is an exposed point of $\Sigma$.
\end{proof}

Take $r_0$ small so that $B_{2r_0}(p)\cap \Omega_2^*=\emptyset$. Let $h_0, \delta_0$ be as in Lemma \ref{locc1} and \ref{locc2}. Define $\rho_0:=\frac{1}{2}\text{dist}(0,\partial \Omega)$. 
We will show the differentiability of $v$ near $p$ in the following lemma.
\begin{lemma}\label{c1lemma}
There exists a constant $\bar{r}_1<\frac{1}{3}\delta_0$, such that $v$ is differentiable at all $y\in \overline{\Omega_1^*}\cap B_{\bar{r}_1}(p)$, where $p$ is as in \eqref{exppt}. 
\end{lemma}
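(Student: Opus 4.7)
The plan is to argue by contradiction. Assume no such $\bar r_1$ exists, so we can pick a sequence $\hat y_k\to p$ with $\hat y_k\in\overline{\Omega_1^*}$ at which $v$ is not differentiable. Equivalently, $\partial^{-}v(\hat y_k)$ contains a non-degenerate segment $[x_k^1,x_k^2]$. Via the Legendre duality in \eqref{dual111} (so $u=v^*$), this segment is precisely the contact set of $u$ with an affine function of slope $\hat y_k$. By the strict convexity of $u_i$ on $\Omega_i$ from Caffarelli's theorem \cite{C92} (applied to the transport $\Omega_i\to\Omega_i^*$ whose target is convex), the segment cannot lie entirely in any $\Omega_i$, so it must cross $\mathcal{F}$, touch $\partial\Omega$, or leave $\overline\Omega$. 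In particular the segment is genuinely non-degenerate and its direction carries geometric information about $\mathcal{F}$ near $0$.

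Set $\bar x_k:=\tfrac12(x_k^1+x_k^2)$ and $\ell_k(y):=\bar x_k\cdot(y-\hat y_k)+v(\hat y_k)$. Since $\bar x_k\in\partial^{-}v(\hat y_k)$, $v-\ell_k\geq 0$ with zero attained at $\hat y_k$. Consider the sub-level sets $Z_h^k:=\{v<\ell_k+h\}$, which are bounded convex sets containing $\hat y_k$. A direct adaptation of the argument of Lemma~\ref{locc2} (replacing centred sections by these sub-level sets of $v-\ell_k$, and using that $p$ is an exposed point of $\Sigma$ with $\hat y_k\to p$) gives $Z_h^k\subset B_{2r_0}(p)$ for all sufficiently small $h$; by the choice of $r_0$ this yields $Z_h^k\cap\Omega^*=Z_h^k\cap\Omega_1^*$, so only the mass carried on $\Omega_1^*$ contributes to the Monge-Amp\`ere measure of $v$ inside $Z_h^k$.

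With this localisation, we play off two estimates. Applying inequality \eqref{la1} to $Z=Z_h^k$, using $|\inf_{Z_h^k}(v-\ell_k)|=h$, yields
\begin{equation*}
h^n\geq C_1\bigl|\tfrac12 Z_h^k\cap\Omega_1^*\bigr|\,|Z_h^k|.
\end{equation*}
On the other hand, the Alexandrov Monge-Amp\`ere identity $|\partial^{-}v(Z_h^k)|=\int_{Z_h^k}g\leq \lambda|Z_h^k\cap\Omega_1^*|$ (from $(Dv)_\sharp g=f$), combined with a transversality argument showing that the fixed segment $[x_k^1,x_k^2]\subset\partial^{-}v(\hat y_k)$ forces the normal-map image $\partial^{-}v(Z_h^k)$ to cover an $n$-dimensional neighbourhood of that segment of measure bounded below uniformly in $h$, gives $|Z_h^k\cap\Omega_1^*|\geq c_0>0$. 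Substituting back, $h^n\geq C'c_0|Z_h^k|\geq C''c_0>0$ for all sufficiently small $h$, contradicting $h\to 0$.

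The most delicate step is the uniform lower bound on $|\partial^{-}v(Z_h^k)|$, i.e.\ the claim that the normal-map image covers a genuinely $n$-dimensional region rather than only the one-dimensional segment $[x_k^1,x_k^2]$. Figalli-Kim's Proposition~2 in \cite{FK1} supplies such a bound when the exposed point of $\Sigma$ lies in the interior of $\Omega^*$; here $p\in\partial\Omega_1^*$, so the Monge-Amp\`ere measure degenerates on one side of $p$, and the argument must exploit the exposed-point geometry of $p$ together with the non-degeneracy of $[x_k^1,x_k^2]$ to spread the normal map transversally into the half-space containing $\Omega_1^*$. Once this boundary refinement of the Figalli-Kim mechanism is in place, the contradiction above produces the required neighbourhood radius $\bar r_1$.
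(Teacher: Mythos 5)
Your plan correctly identifies the two Figalli--Kim estimates \eqref{la1}--\eqref{ua1} as the engine of the proof and correctly reduces to a contradiction, but the central step of the proposal --- the claim that the normal-map image $\partial^{-}v(Z_h^k)$ has $n$-dimensional measure bounded below \emph{uniformly in $h$} --- is a genuine gap, and it is not a minor technicality. As $h\to 0$, the sub-level set $Z_h^k=\{v<\ell_k+h\}$ shrinks to the contact set $\Sigma_k=\{v=\ell_k\}$, and $\partial^{-}v(Z_h^k)$ shrinks with it to $\partial^{-}v(\Sigma_k)$. If $\Sigma_k=\{\hat y_k\}$ (which is the generic situation and, in particular, is not excluded by anything you have said), then $\partial^{-}v(\Sigma_k)$ is the subdifferential of a single point. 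The presence of the non-degenerate segment $[x_k^1,x_k^2]$ inside it tells you only that this convex set has dimension at least one; it gives no positive $n$-dimensional volume. Precisely because $p\in\partial\Omega_1^*$, the Monge--Amp\`ere measure of $v$ degenerates on one side of $p$, so the Caffarelli mechanism that would normally force $\partial^{-}v(\hat y_k)$ to be either a point or $n$-dimensional does not apply; the subdifferential can genuinely be a lower-dimensional segment, and the uniform lower bound on $|\partial^{-}v(Z_h^k)|$ fails. You flag this as ``the most delicate step'' and appeal to a ``boundary refinement of the Figalli--Kim mechanism,'' but no such refinement is supplied, and it is precisely here that the paper takes a different route.

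The paper's proof avoids ever needing a volume lower bound on a subdifferential image. Instead, after normalising so that $v(p+te)=o(t)$ for $t<0$ and $v(p+te)=at+o(t)$ for $t>0$, it considers the \emph{centred} section $S^c_h[v](y_{t_0})$ at the shifted base point $y_{t_0}=p-t_0 e$. It tracks the intersection lengths $t_1(h),t_2(h)$ of that section with the line $\{p+te\}$, proves via two claims and the intermediate value theorem that one can choose a height $h'$ with $t_2(h')=t_0/M$, and then observes that for that choice $p$ sits at normalised distance $\lesssim 1/M$ from $\partial S^c_{h'}[v](y_{t_0})$. Feeding this boundary-distance factor into \eqref{ua1} and pitting it against \eqref{la1} produces an inequality of the form $C_1\leq C_2/M$, which is impossible once $M$ is chosen large. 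In other words the contradiction comes from the \emph{shape} of a carefully chosen section, not from a uniform mass estimate on $\partial^{-}v$. I would encourage you either to adopt the shifted-base-point / intermediate-value mechanism of the paper, or, if you want to keep your structure, to prove (rather than assert) a quantitative lower bound on $|\partial^{-}v(Z_h^k)|$ valid for $p\in\partial\Omega_1^*$; as it stands the proposal does not close the argument.
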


\begin{proof}

First, we establish the differentiability of \( v \) at the point \( p \).
Before giving the detailed proof, let us outline the main idea. Suppose \( v \) is not differentiable at \( p \). By subtracting an affine function, we may assume that $v$ satisfies \eqref{difs1} for some unit vector $e$ pointing outside $\Omega^*_1$. Consider the section \( S^c_h[v](y_{t_0}) = \{ v < \ell \} \), centred at \( y_{t_0} := p - t_0 e \in\Omega^*_1 \), where $t_0>0$ is chosen small so that $y_{t_0}$ lies sufficiently close to $p$. We have two observations: $(i)$  $p$ is nearly the minimum point of $v-\ell,$ and $(ii)$ After applying a normalisation via the affine transformation \( L \) in \eqref{normdd}, \( L(p) \) lies very close to the boundary of \( L\left(S^c_h[v](y_{t_0})\right) \) as $h\to0$. Using the estimates in \eqref{la1} and \eqref{ua1}, we arrive at a contradiction. This argument resembles Caffarelli's localisation technique and the discussion around equations (1.4) and (1.5) in \cite{GK}, but involves new delicate estimates  as $p\in\partial\Omega^*_1$ lies on the boundary. Below, we provide the detailed proof.

Consider \( n \) linearly independent unit vectors \( \hat{e}_i \) (may not be orthogonal), where \( i = 1, \ldots, n \). 
By convexity of $v$, it suffices to show that \( v(p + t\hat{e}_i) \) is differentiable at \( t = 0 \) as a convex function of a single variable \( t \), for each \( i = 1, \ldots, n \).

Suppose to the contrary that \( v \) is not differentiable at \( p \). Then there exists a unit vector \( e \), such that \( \{p + te : t < 0\} \cap \Omega_1^* \neq \emptyset\), and \( v(p + te) \) is not differentiable at \( t = 0 \). By subtracting an affine function \( \ell_1 \), we can assume the following behaviour (see Figure 3.1):
\begin{equation}\label{difs1}
\begin{aligned}
v &\geq 0&& \text{on}\ \mathbb{R}^n,\\
v(p + te) &= o(t) && \text{for } t < 0, \\
v(p + te) &= at + o(t) && \text{for } t > 0,
\end{aligned}
\end{equation}
where \( a>0 \) is a constant. 
Note that the target of \( Dv \) is changed to \( \tilde{\Omega} = \Omega - \{D\ell_1\} \), a translation of \( \Omega \).
Given that \( \partial^{-}v(p) \cap \tilde{\Omega} \) is non-empty, the estimates \eqref{la1}--\eqref{ua1} still apply.  

Define \( y_{t_0} := p - t_0e \) for $t_0 \in (0, \frac{1}{2}\delta_0)$ sufficiently small. 
Let's consider the centred section 
	$$S^c_{h}[v](y_{t_0}) = \{v < \ell\},$$ 
where $\ell$ is an affine function satisfying $\ell(y_{t_0}) = v(y_{t_0})+h$. 
Consider the intersection
of the boundary $\partial S^c_{h}[v](y_{t_0})$ with the line $\{p+te : t\in\mathbb{R}\}$, and let
	\[ \partial S^c_{h}[v](y_{t_0}) \cap \big\{p+te : t\in\mathbb{R}\big\} =  \big\{ p - t_1(h)e, \ \ p+t_2(h)e \big\}, \]
where $t_1(h) > t_0$ and $t_2(h) > -t_0$. 
By \cite[Lemma A.8]{CM}, we have that \(S^c_{h}[v](y_{t_0})\) varies continuously with respect to \(h \in (0, h_0]\). Hence, \(t_1(h)\) and \(t_2(h)\) depend continuously on \(h\).
Since \(S^c_{h}[v](y_{t_0})\) is centred at $y_{t_0}$, we have the following bounds:
\begin{equation}\label{t0t1}
t_0 < t_1(h) < C_n(t_0 + t_2(h)),
\end{equation}
where \(C_n\) is a constant depending only on \(n\).

\begin{figure}\label{f3.1}
 \centering
 \includegraphics[scale=0.2]{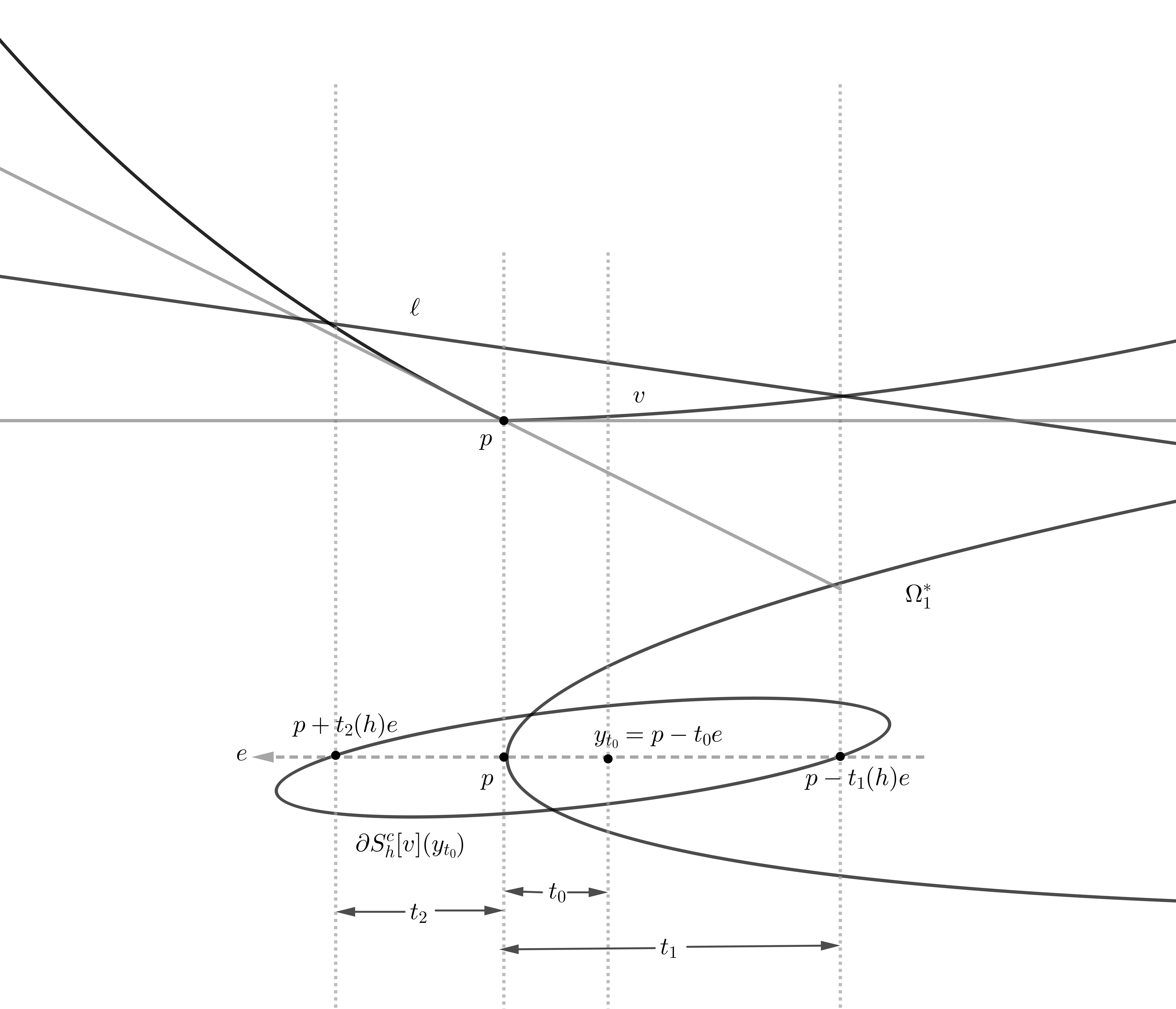}
 \caption{A centred section \(S^c_{h}[v](y_{t_0})\), near $p$. }
\end{figure}

We first \emph{claim} that \(t_2(h_0) \geq t_0\), provided $t_0$ is chosen sufficiently small initially.
Suppose, to the contrary, that \(t_2(h_0) < t_0\).
Then, by \eqref{difs1}, we have 
\begin{equation}\label{t21}
\ell(p+t_2(h_0)e) = v(p+t_2(h_0)e) \leq 2at_0,
\end{equation}
provided \(t_0\) is small enough.
By \eqref{difs1} and \eqref{t0t1}, we obtain 
\begin{equation}\label{t22}
\ell(p-t_1(h_0)e) = v(p-t_1(h_0)e) = o(t_0).
\end{equation}
By \eqref{t21}, \eqref{t22}, and the balance of \(S^c_{h_0}[v](y_{t_0})\) with respect to $y_{t_0}$,
we have 
	\[ \ell(y_{t_0}) \leq C_n(2at_0 + o(t_0)), \] contradicting the fact that \(\ell(y_{t_0}) \geq \ell(y_{t_0}) - v(y_{t_0}) = h_0\), provided \(t_0 \ll h_0\) was chosen initially.
Hence, the claim is proved.

We then \emph{claim} that \(\liminf_{h\rightarrow 0}t_2(h)\leq 0\), provided $t_0$ is chosen sufficiently small initially. 
Suppose, to the contrary, that there exists a constant \(a_0 \in (0, t_0)\) and a sequence \(\{h_j\}\) converging to \(0\), such that \(t_2(h_j) \geq a_0\) for all \(j\). Hence, from \eqref{difs1},
\begin{equation}\label{t2hj}
v(p+t_2(h_j)e)\geq \frac{1}{2}aa_0,
\end{equation}
provided \(t_0\) is small enough.
Moreover, since $S^c_{h_j}[v](y_{t_0})$ is centred at $y_{t_0}$ we have the segment 
\begin{equation}
I=\left\{p+\theta e: -t_0-\frac{1}{C_n}t_0<\theta<a_0\right\} \subset S^c_{h_j}[v](y_{t_0})\quad\forall\,j=1,2,\cdots.
\end{equation}
Suppose \(S^c_{h_j}[v](y_{t_0})=\{y\in\mathbb{R}^n: v(y)<v(y_{t_0})+x_j\cdot (y-y_{t_0})+h_j\}\) for some vector \(x_j\in \mathbb{R}^n\).
Since \(v\) is globally Lipschitz, \( \sup_j |x_j| \leq \|v\|_{\text{Lip}} < \infty \). Consequently, by taking a subsequence if necessary, we may assume \(x_j \to \bar{x}\) as \(j \to \infty\).
In the limit, we thus obtain 
	\[ 0 \leq v(y) \leq \bar{\ell}(y) \quad \forall\, y \in I, \] 
where \( \bar{\ell}(y) = v(y_{t_0}) + \bar{x} \cdot (y - y_{t_0}) \). 
Since \(y_{t_0} \in I\), \(v(y_{t_0}) = \bar{\ell}(y_{t_0})\) and \(v \leq \bar{\ell}\) on \(I\), the convexity of \(v\) implies that \(v \equiv \bar{\ell}\) on \(I\), namely \(v(p + te)\) is affine with respect to \(t\) for \(t \in (-t_0 - \frac{1}{C_n}t_0, a_0)\), which contradicts \eqref{difs1}. 
Hence, the claim $\liminf_{h \to 0} t_2(h) \leq 0$ is proved.

Now, choose a $t_0>0$ small enough such that the above two claims hold. By these claims and the continuity of \(t_2(h)\) with respect to \(h\), there exists an \(h'\in(0, h_0)\) such that 
\begin{equation}
t_2(h') = \frac{1}{M}t_0,
\end{equation}
where \(M > 1\) is a large constant to be determined.
By \eqref{t0t1}, we then have 
\begin{equation}
t_0 < t_1(h') < C_n(t_0 + t_2(h')) \leq 2C_nt_0.
\end{equation}
Assume that \(S^c_{h'}[v](y_{t_0}) = \{v < \ell\}\) for some affine function \(\ell\) satisfying \(\ell(y_{t_0}) = v(y_{t_0}) + h'\).

Evaluating the affine function \(\ell\) at specific points yields:
\begin{align}
\ell(p - t_1e) = v(p - t_1(h')e) &= o(t_1) = o(t_0), \\
\ell\left(p + t_2(h')e\right) = v\left(p + \frac{1}{M}t_0e\right) &= \frac{a}{M}t_0 + o(t_0).
\end{align}
The above equations imply that \(\ell\) is increasing along the \(e\) direction (provided $t_0$ is chosen sufficiently small at the beginning), leading to:
\begin{equation}\label{makec1}
\ell(p) - v(p) = \ell(p) \geq \ell(y_{t_0}) \geq \ell(y_{t_0}) - v(y_{t_0}) \approx \left| \inf_{S^c_{h'}[v](y_{t_0})} (v - \ell) \right|.
\end{equation}

Applying John's lemma, we identify an affine transformation \(L\) such that \(L(y_{t_0}) = 0\), and 
\begin{equation}\label{normdd}
B_1(0) \subset L\left(S^c_{h'}[v](y_{t_0})\right) \subset B_{C_n}(0).
\end{equation}
Since the affine transformation preserves the ratio of distances, we have
	\[ \frac{\text{dist}\left(L(p), \partial L\left(S^c_{h'}[v](y_{t_0})\right)\right) }{\text{diam}(L\left(S^c_{h'}[v](y_{t_0})\right))} = \frac{\text{dist}(p,\partial S^c_{h'}[v](y_{t_0})}{\text{diam}(S^c_{h'}[v](y_{t_0})} \leq \frac{t_2(h')}{t_1(h')} \leq \frac{1}{M}.\]
Hence, by \eqref{normdd}
\begin{equation}\label{makec2}
\text{dist}\left(L(p), \partial L\left(S^c_{h'}[v](y_{t_0})\right)\right)   \leq \frac{C_n}{M}.
\end{equation}

Since \(y_{t_0} \in \Omega_1^* \cap B_{\delta_0}(p)\) and \(h' \leq h_0\), Lemma \ref{locc2} implies \(S^c_{h'}[v](y_{t_0}) \subset B_{2r_0}(p)\), ensuring that \(S^c_{h'}[v](y_{t_0}) \cap \Omega_2^* = \emptyset\). 
The convexity of $\Omega^*_1$ leads to the volume comparison:
\begin{equation}\label{makec3}
\left| \left(\frac{1}{2}S^c_{h'}[v](y_{t_0})\right)_{y_{t_0}} \cap \Omega^* \right| \geq C_n \left| S^c_{h'}[v](y_{t_0}) \cap \Omega^* \right|.
\end{equation}
Then by the estimate \eqref{la1}, we can obtain
\begin{equation}\label{makec4}
 \left|\inf_{S^c_{h'}[v](y_{t_0})} (v-\ell)\right|^n \geq C_1 C_n |S^c_{h'}[v](y_{t_0}) \cap \Omega^*| |S^c_{h'}[v](y_{t_0})|,
\end{equation}
where \(C_1\) and \(C_n\) are constants independent of \( M \).

On the other hand, by estimates \eqref{ua1}, \eqref{makec1} and \eqref{makec2}, we can deduce
\begin{equation}
 \left|\inf_{S^c_{h'}[v](y_{t_0})} (v-\ell)\right|^n \lesssim |\ell(p)-v(p)|^n \leq \frac{C_2 C_n}{M} |S^c_{h'}[v](y_{t_0}) \cap \Omega^*| |S^c_{h'}[v](y_{t_0})|,
\end{equation}
where \(C_2\) is another constant  independent of \( M \).
However, this contradicts the estimate \eqref{makec4}, when the constant \(M\) is sufficiently large. 

Consequently, we conclude that the function \(v\) is differentiable at the point \(p\). By selecting a sufficiently small \(\bar r_1 < \delta_0\), we further obtain:
\begin{equation}\label{newextend1}
\partial^{-}v(B_{\bar r_1}(p)) \subset B_{\rho_0}(0) \subset \Omega.
\end{equation}
Last, for any point \(y \in \overline{\Omega_1^*} \cap B_{\bar r_1}(p)\), we can employ the previous argumentation for establishing the differentiability of \(v\) at \(p\) to similarly show that \(v\) is differentiable at \(y\).
\end{proof}

\begin{remark}
\emph{ Let \( r_0, \delta_0, \rho_0, \) and \( \bar r_1 \) be the constants given in preceding lemmas. 
According to \eqref{newextend1}, for any point \( y \in \Omega_1^* \cap B_{\bar r_1}(p) \), the set \( \partial^{-}v(y) \cap \Omega \) is non-empty. By invoking Proposition 2 from \cite{FK1} and the subsequent discussions, it follows that the function \( v \) is continuously differentiable and exhibits strict convexity within \( \Omega_1^* \cap B_{\bar r_1}(p) \). }
\end{remark}

\begin{lemma}[Localisation]
The function $v$ satisfies the Monge-Amp\`ere inequality:
\begin{equation}\label{lockey1}
\frac{1}{\lambda^2}\chi_{_{\Omega^*_1}} \leq \det\, D^2v \leq \lambda^2\chi_{_{\Omega^*_1}} 
\quad \text{in} \ B_{\bar r_1}(p)
\end{equation}
in the Alexandrov sense, as in \eqref{aama}. 
\end{lemma}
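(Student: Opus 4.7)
The plan is to verify the Alexandrov formulation \eqref{aama} directly: for every Borel $A \subset B_{\bar r_1}(p)$ we must produce the two bounds $\frac{1}{\lambda^2}|A \cap \Omega_1^*| \leq |\partial^{-}v(A)| \leq \lambda^2 |A \cap \Omega_1^*|$. Four ingredients assembled in the preceding work make this possible: (a) the choice $\bar r_1 < \delta_0 \leq 2r_0$ together with $B_{2r_0}(p) \cap \Omega_2^* = \emptyset$ yields $A \cap \Omega^* = A \cap \Omega_1^*$; (b) the inclusion $\partial^{-}v(A) \subset B_{\rho_0}(0) \subset \Omega$ from \eqref{newextend1}; (c) the $C^1$ smoothness and strict convexity of $v$ on $\Omega_1^* \cap B_{\bar r_1}(p)$ recorded in the preceding Remark; and (d) the Brenier pushforward $(Dv)_\sharp g = f$, which restricts via \eqref{condi3} to $(Dv)_\sharp(g\chi_{\Omega^*_i}) = f\chi_{\Omega_i}$ for $i=1,2$.

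First I would split $A$ into $A \cap \overline{\Omega_1^*}$ and $A \setminus \overline{\Omega_1^*}$ and show the second piece contributes nothing to $|\partial^{-}v(A)|$. For $a \in A \setminus \overline{\Omega_1^*} \subset B_{\bar r_1}(p) \setminus \overline{\Omega^*}$ (by (a)), any $x \in \partial^{-}v(a)$ satisfies $a \in \partial^{-}u(x)$ by duality; if $x \in \Omega_i$ then \eqref{condi3} forces $a \in \overline{\Omega_i^*}$, contradicting $a \notin \overline{\Omega^*}$. Combined with $\partial^{-}v(a) \subset \Omega = \Omega_1 \cup \mathcal{F} \cup \Omega_2$ from (b), this pins $\partial^{-}v(a)$ inside the Lipschitz hypersurface $\mathcal{F}$, yielding $|\partial^{-}v(A \setminus \overline{\Omega_1^*})| = 0$ and hence $|\partial^{-}v(A)| = |\partial^{-}v(A \cap \Omega_1^*)|$ modulo null sets. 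On $A \cap \Omega_1^*$ the strict convexity in (c) makes $Dv$ injective, and Caffarelli's $C^{1,\beta}$ regularity and strict convexity of $u_1$ on $\Omega_1$ dualises to global injectivity of $Dv$ on $\Omega_1^*$, ensuring no preimage interference from $\Omega_1^* \setminus B_{\bar r_1}(p)$. Applying $(Dv)_\sharp(g\chi_{\Omega^*_1}) = f\chi_{\Omega_1}$ with $E = \partial^{-}v(A \cap \Omega_1^*) \subset \Omega_1$---preimages in $\Omega^*_2$ being excluded because $Dv(\Omega^*_2) \subset \Omega_2$ a.e.\ and $\Omega_1 \cap \Omega_2 = \emptyset$---then produces the change-of-variables identity
\[ \int_{\partial^{-}v(A \cap \Omega_1^*)} f(x)\, dx = \int_{A \cap \Omega_1^*} g(y)\, dy, \]
and inserting the density bounds $\frac{1}{\lambda} < f, g < \lambda$ on $\Omega$ and $\Omega_1^*$ delivers \eqref{lockey1}.

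The main obstacle I anticipate is the first step---establishing $\partial^{-}v(A \setminus \overline{\Omega_1^*}) \subset \mathcal{F}$---because it hinges on a delicate interplay between the $u$--$v$ duality, the dual transport refinement \eqref{condi3}, and the local geometric isolation $B_{\bar r_1}(p) \cap \Omega_2^* = \emptyset$. Once it is in hand, the remainder is a standard pushforward plus change-of-variables argument built on the strict convexity and differentiability already established.
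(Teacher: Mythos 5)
Your proposal matches the paper's proof in its essential mechanism: you show $\partial^{-}v$ maps $B_{\bar r_1}(p)\setminus\Omega_1^*$ into the null set $\mathcal{F}$ by exactly the same duality argument (if $x\in\partial^{-}v(y)$ lay in some $\Omega_i$ then $y=Du_i(x)\in\Omega_i^*$, contradicting $y\notin\Omega_1^*$ and $B_{\bar r_1}(p)\cap\Omega_2^*=\emptyset$), and then invoke the Brenier pushforward with the density bounds on $\Omega_1^*$. The paper treats the second half as known and reduces the lemma to the ``no mass outside $\Omega_1^*$'' claim; you spell out the change-of-variables step explicitly, but the route is the same. One small inaccuracy: the inequality $\delta_0\leq 2r_0$ is not established in the preceding lemmas and is not needed --- all that is used is $\bar r_1<\delta_0$ and $B_{2r_0}(p)\cap\Omega_2^*=\emptyset$, which together suffice once one checks that $B_{\bar r_1}(p)\subset B_{2r_0}(p)$ via Lemma \ref{locc2}.
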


\begin{proof}
It suffices to show $\det D^2v$ has no mass in $B_{\bar r_1}(p) \setminus \Omega_1^*$, namely $|\partial^{-}v(B_{\bar r_1}(p) \setminus \Omega_1^*)| =0$. 

In fact, let $ y\in B_{\bar r_1}(p) \setminus \Omega_1^*$, and $x\in \partial^{-}v(y)$. Owing to \eqref{newextend1}, it is guaranteed that \( x \) belongs to \( \Omega = \Omega_1 \cup \Omega_2 \cup \mathcal{F} \).
Recall that $u_i$ is $C^1$ and strictly convex inside $\Omega_i$ for $i=1, 2.$
 If \( x \) were to lie in either of \( \Omega_i \), it would imply that \( y \in\Omega_i^* \), being the gradient \( Du_i(x) \). This, however, contradicts our initial assumption that \( y \in B_{\bar r_1}(p) \setminus \Omega_1^* \). Consequently, it must be that \( x \) resides in the set \( \mathcal{F} \).

This reasoning concludes that the subdifferential \( \partial^{-}v \) maps \( B_{\bar r_1}(p) \setminus \Omega_1^* \) entirely into \( \mathcal{F} \). 
 By \cite[Theorem 5.1]{CL1}, we have that $\mathcal{F}$ is a Lipschitz hypersurface, and hence its Lebesgue measure is zero.
Therefore, the Lebesgue measure of the image set satisfies $|\partial^{-}v(B_{\bar r_1}(p) \setminus \Omega_1^*)| =0$.
\end{proof}

Utilising Lemmas \ref{locc1} and \ref{locc2}, we are able to determine constants \( \bar r_0 < \bar r_1 \) and \( h_0 > 0 \) such that:
\begin{equation}\label{lockey2}
S_h^c[v](y) \subset B_{\bar r_1}(p)  \quad \forall\, y \in B_{\bar r_0}(p) \cap \Omega_1^*, \ \  \forall\, h \leq h_0.
\end{equation}
With these preparations, we are now ready to prove Theorem \ref{t111} as follows.
\begin{proof}[Proof of Theorem \ref{t111}]
By \eqref{lockey1} and \eqref{lockey2}, we can invoke the proof of \cite[Lemma 2.2, Corollary 2.3]{C96} (see also \cite[Theorem 7.13]{CM}) to obtain a quantitative uniform convexity of $v$ within $B_{\bar{r}_0}(p)\cap \overline{\Omega_1^*}$. Specifically, this implies that for any two points $y, \tilde{y} \in B_{\bar{r}_0}(p) \cap \overline{\Omega_1^*}$, one has the inequality 
	$$ |Dv(y) - Dv(\tilde{y})| \geq C|y - \tilde{y}|^a,$$ 
where $a > 2$ and $C$ is a positive constant.

Since $u_1 = v^*$ in $\Omega_1$, where $v^*$ is the Legendre transform of $v$, a basic and well known property of the Legendre transform \cite[Remark 7.10]{CM} allows us to deduce that $u_1 \in C^{1,\beta}(B_r(0) \cap \overline{\Omega_1})$ for some $\beta \in (0, 1)$ and $r > 0$. 
By a similar argument, and a possibly smaller $r$, we can ensure that $u_2 \in C^{1,\beta}(B_r(0) \cap \overline{\Omega_2})$.

Inferred from \eqref{gn1}, we conclude that the singular set $\mathcal{F}$ exhibits $C^{1,\beta}$ regularity in $B_r(0)$. Finally, a standard covering argument ensures the $C^{1,\beta}$ regularity of $\mathcal{F}$ inside $\Omega$.
\end{proof}

\begin{remark}
\emph{
Since \( u_i \in C^{1,\beta}(B_r(0) \cap \overline{\Omega_i}) \) for \( i = 1, 2 \), we have \( p = y_0 = Du_1(0) \) and \( \hat{p} = \hat{y}_0 = Du_2(0)\).
}
\end{remark}

\section{Obliqueness}\label{S4}

In this section, we further assume that $\Omega^*_i$ are $C^2$ and uniformly convex domains, for $i = 1, 2$, and that the functions $f, g$ are positive and belong to $C^\alpha_{\text{loc}}(\Omega)$ and $C^\alpha(\overline{\Omega^*})$, respectively, for some $\alpha\in(0,1)$. Without loss of generality, we can translate our coordinate system such that $x_0 \in \mathcal{F}$ is at the origin, i.e., $x_0 = 0$.

By subtracting a constant, we may assume \( u_1(0) = u_2(0) = u(0) = 0. \)
Denote $y_{_{01}} = y_0 = Du_1(0)\in  \partial\Omega^*_1$ and $y_{_{02}} = \hat{y}_0 = Du_2(0)\in  \partial\Omega^*_2$.
As in \eqref{lockey1} and \eqref{lockey2}, we identify constants \(\bar{r}_1 > 0\), \(\bar{r}_0 > 0\), and \(h_0 > 0\) such that the following holds:
\begin{equation}\label{lockey3}
\det\, D^2v = \tilde g_i \quad \text{ in } B_{\bar r_1}(y_{_{0i}}), \quad i = 1,2,
\end{equation}
interpreted in the Alexandrov sense, where $\tilde{g}_i(y) = \frac{g(y)}{f(Dv(y))}$ for $y \in \overline{\Omega_i^*} \cap B_{\bar r_1}(y_{_{0i}})$, and $\tilde{g}_i(y) = 0$ for $y \in B_{\bar r_1}(y_{_{0i}}) \setminus \overline{\Omega_i^*}$, with $i = 1, 2$. Furthermore, for each $i=1,2$, we have:
\begin{equation}\label{lockey4}
S_h^c[v](y) \subset B_{\bar r_1}(y_{_{0i}})  \quad \forall\, y \in B_{\bar r_0}(y_{_{0i}}) \cap \Omega_i^*, \quad \forall\, h \leq h_0.
\end{equation} 

The subsequent analysis for $v$ is in a neighbourhood of $y_0$, which nevertheless also applies to $v$ near $\hat y_0$ analogously. 
Hereafter, for brevity we
denote $S^c_{h}[v](y_0)$ by  $S^c_{h}[v]$. 
In light of \eqref{lockey3} and \eqref{lockey4}, together with the convexity of $\Omega_1^*$, we infer that the Monge-Amp\`ere measure $\det\,D^2v$ satisfies a doubling property in $S_h^c[v](y)$ for all $y\in \overline{\Omega_1^*} \cap B_{\bar r_1}(y_{0})$ and $h < h_0$. 
From \cite[Lemma 2.2]{C96}, the sections $S_h^c[v]$ are of geometric decay. 
Specifically, given any $0 < s_1 < \bar{s} < 1$, there exists a constant $s_0 \in (0,1)$ independent of $h$ such that $\forall$ $y, \bar{y} \in \overline{\Omega_1^*} \cap B_{\bar{r}_1}(y_{0})$ with $y \in s_1 S^c_h[v](\bar{y})$, one has
\begin{equation}\label{geodc}
S^c_{s h}[v](y) \subset \bar{s} S^c_h[v](\bar{y}) \quad \forall\, h \in (0, h_0),\ \forall\, s \in (0, s_0).
\end{equation}
Moreover, by \cite[Corollary 2.3]{C96} (see also \cite[Proof of Theorem 1]{C92b}), the function $v$ satisfies a strict convexity estimate of the form:
\begin{equation}\label{strictconvex}
v(y) \geq v(y_0) + Dv(y_0) \cdot (y - y_0) + C |y - y_0|^{1 + a}  \quad \forall\, y \in \overline{\Omega_1^*} \ \text{ near } y_0,
\end{equation}
where $a > 1$ and $C$ are constants.

Similar to that in the optimal partial transport \cite[Lemma 2.6]{CLW3}, we have the uniform density property as follows:
\begin{lemma}[Uniform Density]\label{ud1}
There exists a constant $\delta > 0$, depending only on $\Omega, \Omega^*, \lambda$, and $\text{dist}(x_0, \partial\Omega)$, such that
\[
\frac{|S_h^c[v] \cap \Omega^*_1|}{|S_h^c[v]|} \geq \delta \quad \ \forall\, h\leq h_0.
\]
\end{lemma}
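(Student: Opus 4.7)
The plan is to argue by contradiction, in the spirit of the proof of \cite[Lemma 2.6]{CLW3}. Suppose there exists a sequence $h_k \to 0$ such that $\delta_k := |S_{h_k}^c[v] \cap \Omega_1^*|/|S_{h_k}^c[v]| \to 0$ as $k \to \infty$. For each $k$, apply John's lemma to the centred section to obtain an affine transformation $T_k(y) = L_k(y - y_0)$ with $B_1 \subset T_k(S_{h_k}^c[v]) \subset B_{C_n}$, and introduce the renormalized convex function
\[
\tilde v_k(z) := v(L_k^{-1}z + y_0) - v(y_0) - \bar p_k \cdot L_k^{-1}z,
\]
where $\bar p_k$ is the slope defining the centred section. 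Then $\tilde v_k(0)=0$, $\{\tilde v_k < h_k\} = T_k(S_{h_k}^c[v])$ is a normalized convex body centred at the origin, and the Monge--Amp\`ere measure transforms by $\det D^2 \tilde v_k = (\det L_k)^{-2}\det D^2 v$. Applying the Alexandrov maximum principle to $\tilde v_k - h_k$ on the normalized set and combining with the upper bound in \eqref{lockey3} gives
\[
h_k^n \leq C_n (\det L_k)^{-1}\int_{S_{h_k}^c[v]} \det D^2 v \leq C_n \lambda^2 (\det L_k)^{-1}|S_{h_k}^c[v]\cap \Omega_1^*|.
\]
Together with $|S_{h_k}^c[v]| \asymp_n (\det L_k)^{-1}$ from John's lemma, this yields the first key inequality $|S_{h_k}^c[v]\cap\Omega_1^*|\cdot|S_{h_k}^c[v]| \geq c h_k^n$, equivalently $\delta_k \geq c h_k^n / |S_{h_k}^c[v]|^2$.

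To close the argument, I would produce a matching upper bound $|S_{h_k}^c[v]| \leq C h_k^{n/2}$. From the strict convexity \eqref{strictconvex}, the differentiability of $v$ at $y_0$ (Lemma~\ref{c1lemma}) giving $Dv(y_0)=0$, and the convergence $\bar p_k \to 0$, one first obtains the localisation $S_{h_k}^c[v] \cap \overline{\Omega_1^*} \subset B_{r_k}(y_0)$ with $r_k \to 0$ at the rate dictated by the non-degenerate Monge--Amp\`ere density on $\Omega_1^*$. The centroid condition at $y_0 \in \partial \Omega_1^*$, combined with the uniform convexity of $\Omega_1^*$ at $y_0$ (so that $\Omega_1^*$ locally lies in the half-space $\{(y-y_0)\cdot\nu^* \geq 0\}$ up to a quadratic correction), then forces the entire section to be contained in a comparable ball $B_{Cr_k}(y_0)$: Winternitz's inequality bounds the extent of $S_{h_k}^c[v]$ in the $-\nu^*$ direction by $n$ times the extent in $+\nu^*$, and tangential extents are controlled by projecting the centroid condition. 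Consequently $|S_{h_k}^c[v]|\leq C r_k^n$, and substituting back produces a uniform lower bound for $\delta_k$, contradicting $\delta_k \to 0$.

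The main obstacle will be controlling the extent of $S_{h_k}^c[v]$ in the $-\nu^*$ direction, where $\det D^2 v \equiv 0$ and $v$ may a priori be very flat. The key is to exploit the centroid balance $\int_{S_{h_k}^c[v]}(y-y_0)\,dy = 0$ together with the localisation on the $\overline{\Omega_1^*}$ side coming from the strict convexity. Once $S_{h_k}^c[v] \subset B_{Cr_k}(y_0)$ is established, the uniform convexity of $\Omega_1^*$ makes the exceptional cap $\{(y-y_0)\cdot \nu^* > 0\}\setminus \Omega_1^*$ near $y_0$ of volume $O(r_k^{n+1}) = o(|S_{h_k}^c[v]|)$; Gr\"unbaum's inequality applied with the hyperplane $\{(y-y_0)\cdot \nu^* = 0\}$ through the centroid then yields
\[
|S_{h_k}^c[v]\cap\Omega_1^*| \geq c_n |S_{h_k}^c[v]| - o(|S_{h_k}^c[v]|) \geq \tfrac{c_n}{2}|S_{h_k}^c[v]|,
\]
completing the contradiction and establishing the uniform density bound.
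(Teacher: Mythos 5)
Your opening move is sound: the Alexandrov maximum principle applied to the normalised section, together with the Monge--Amp\`ere upper bound in \eqref{lockey3}, does give $h_k^n \leq C\lambda^2\,|S_{h_k}^c[v]|\cdot|S_{h_k}^c[v]\cap\Omega_1^*|$, i.e.\ $\delta_k \geq c\,h_k^n/|S_{h_k}^c[v]|^2$. The gap is in the step where you claim this closes, namely the localisation $S_{h_k}^c[v] \subset B_{Cr_k}(y_0)$ with $r_k$ coming from the strict convexity \eqref{strictconvex} on the $\overline{\Omega_1^*}$ side. The centroid balance plus a Winternitz-type inequality controls the ratio of extents in the $\pm\nu^*$ directions, and strict convexity controls the extent in $+\nu^*$, so the \emph{thickness} of $S_{h_k}^c[v]$ in the $\nu^*$ direction is indeed $O(r_k)$. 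But nothing in your argument controls the \emph{tangential} diameter: the centroid condition is symmetric under $e\mapsto -e$ and yields no absolute bound. A thin pancake $\{|(y-y_0)\cdot\nu^*| < \epsilon_k,\ |\text{tangential part}| < R_k\}$ with $\epsilon_k \ll R_k^2$ is perfectly balanced at $y_0$, has $S_{h_k}^c[v]\cap\Omega_1^* \subset B_{C\sqrt{\epsilon_k}}(y_0)$ tiny, and has $\delta_k \approx (\epsilon_k/R_k^2)^{(n-1)/2} \to 0$, all compatible with your first inequality. This is precisely the degenerate profile that the uniform density lemma must exclude, and it survives your geometric constraints. (The upper bound $|S_{h_k}^c[v]|\leq Ch_k^{n/2}$ you want is essentially the volume estimate \eqref{vol}, which in the paper's logic \emph{follows from} Lemma~\ref{ud1} rather than preceding it.)

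Even granting the localisation $S_{h_k}^c[v]\subset B_{Cr_k}(y_0)$, the final Gr\"unbaum step has a quantitative problem. The exceptional cap $\{(y-y_0)\cdot\nu^*>0\}\setminus\Omega_1^*$ inside $B_{Cr_k}$ has volume $O(r_k^{n+1})$, and the Alexandrov inequality combined with $\delta_k\leq 1$ gives $|S_{h_k}^c[v]|\geq c\,h_k^{n/2}$. With $r_k\sim h_k^{1/(1+a)}$ from \eqref{strictconvex}, the requirement $r_k^{n+1}=o(|S_{h_k}^c[v]|)$ becomes $a < 1+\tfrac{2}{n}$, but \eqref{strictconvex} only asserts $a>1$ with no upper control. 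So the $o(\cdot)$ comparison is not justified. The proof the paper points to (Caffarelli's Theorem~3.1 in \cite{C96} and Lemma~2.6 of \cite{CLW3}) proceeds instead by a normalisation and compactness argument: one renormalises the sections to size one, extracts a subsequential limit of the rescaled potentials and of the rescaled domain $T_k\Omega_1^*$, and derives a contradiction between the positive Monge--Amp\`ere mass forced by the Alexandrov estimate and the degeneracy of the limiting target set when $\delta_k\to 0$; it is the interplay with the limit equation, not a direct pointwise Gr\"unbaum estimate, that rules out the pancake degeneration.
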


The proof of Lemma \ref{ud1} aligns closely with that in \cite[Theorem 3.1]{C96} and \cite[Lemma 2.6]{CLW3}, which is omitted here for brevity.

In the following, the notation $a \approx b$ means that $C^{-1}a \leq b \leq Ca$ for some universal positive constant $C.$  Given a convex domain $D \subset \mathbb{R}^n$, we say that $D$ has a \emph{good shape} if $B_{C^{-1}}(x^*) \subset D \subset B_{C}(x^*)$, where $C$ is a universal positive constant and $x^*$ is the centre of mass of $D$.
By a change of coordinates, assume the hyperplane \( \mathcal{H}:=\{y \in \mathbb{R}^n : (y-y_0)\cdot e_1 = 0\} \) tangentially touches \( \partial\Omega_1^* \) at \( y_0 \). 
At this point we may apply \cite[Corollary 2.7]{CLW3} for which the key hypothesis is the uniform density estimate of Lemma \ref{ud1}. From this, we can deduce both a volume estimate and a tangential \( C^{1,1-\epsilon} \) estimate for the function \( v \).

\begin{corollary}\label{co21}
The following assertions hold true:
\begin{enumerate}
\item[(i)] \emph{Volume Estimate:}
\begin{equation}\label{vol}
|S_h[v](y_0)\cap \Omega_1^*| \approx |S_h^c[v](y_0)\cap \Omega_1^*| \approx |S_h^c[v](y_0)| \approx h^{\frac{n}{2}}.
\end{equation}
Moreover, for any affine transform $\mathcal{A}$, 
if either $\mathcal{A}(S_h^c[v](y_0))$ or $\mathcal{A}(S_h[v](y_0) \cap \Omega_1^*)$ has a good shape, so is the other one.

\item[(ii)] \emph{Tangential $C^{1,1-\epsilon}$ regularity of $v$:} $\forall$ $\epsilon > 0$, $\exists$ a constant $C_\epsilon$ such that:
\begin{equation}\label{tanc2}
B_{C_\epsilon h^{\frac{1}{2}+\epsilon}}(y_0) \cap \mathcal{H} \subset S_h^c[v](y_0) \quad \forall\, h > 0\text{ sufficiently small}.
\end{equation}
\end{enumerate}
\end{corollary}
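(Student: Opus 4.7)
My plan is to follow the strategy of \cite{C96} and \cite{CLW3} adapted to the present setting, where the Monge-Amp\`ere density $\det D^2 v$ is concentrated on $\Omega_1^*$ near $y_0$. The key tools are the local equation \eqref{lockey3}, the Uniform Density Lemma \ref{ud1}, the strict convexity \eqref{strictconvex}, and the geometric decay of sections \eqref{geodc}.

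For Part (i), I would apply the Alexandrov-type bounds \eqref{la1} and \eqref{ua1} to the centered section $Z = S_h^c[v](y_0)$, viewed as $Z = \{v < \tilde\ell\}$ for its defining affine function $\tilde\ell$. By the centering condition together with \eqref{secrela}, $|\inf_Z (v - \tilde\ell)|$ is comparable to $h$ up to a universal constant. Lemma \ref{ud1} provides $|Z \cap \Omega_1^*| \gtrsim |Z|$. Combining these yields $h^n \approx |Z \cap \Omega_1^*| \cdot |Z| \approx |Z|^2$, so $|S_h^c[v](y_0)| \approx h^{n/2}$. The same estimate for $|S_h[v](y_0) \cap \Omega_1^*|$ follows by a parallel computation, using that $v$ is $C^1$ and strictly convex in $\Omega_1^* \cap B_{\bar r_1}(y_0)$ by Lemma \ref{c1lemma}. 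The shape-equivalence assertion then follows because $S_h[v](y_0) \cap \Omega_1^*$ and $S_h^c[v](y_0) \cap \Omega_1^*$ occupy comparable fractions of $S_h^c[v](y_0)$, so any affine normalization sending one to a set of good shape sends the other to a comparable shape.

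For Part (ii), I would argue by contradiction. Suppose $B_{C_\epsilon h^{1/2+\epsilon}}(y_0) \cap \mathcal{H}$ were not contained in $S_h^c[v](y_0)$ for some small $h$. Let $T$ be a John normalizing affine map of $S_h^c[v](y_0)$, with $T y_0 = 0$ and $B_1 \subset T(S_h^c[v](y_0)) \subset B_{C_n}$; Part (i) gives $|\det T| \approx h^{-n/2}$. The tangential deficiency would force some singular value of $T$ in an $\mathcal{H}$-direction to exceed $C_\epsilon^{-1} h^{-1/2-\epsilon}$. On the other hand, by the uniform convexity of $\partial\Omega_1^*$ at $y_0$, the set $\Omega_1^*$ locally contains a paraboloid $\{y_1 > C_0 |y'|^2\}$, whose image under $T$ must contribute Monge-Amp\`ere mass of order one to the normalized section of volume of order one. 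Combined with the normal-direction bound on $S_h^c$ extracted from \eqref{strictconvex}, this produces an inconsistency for $h$ small.

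The main obstacle is reaching the sharp exponent $\frac{1}{2}+\epsilon$ for every $\epsilon > 0$: a one-shot paraboloid comparison yields only a weaker lower bound of the form $h^{1/2+\epsilon_0}$ for some fixed $\epsilon_0 > 0$ depending on $n$ and the uniform-convexity constant of $\partial\Omega_1^*$. To close the gap, I would bootstrap through the dyadic sequence of heights $h, sh, s^2h, \ldots$ furnished by \eqref{geodc}, at each step renormalizing and re-applying the paraboloid constraint (which tightens as the shape of the section flattens). This iteration, analogous to the one in \cite[Lemma 4.1]{CLW3}, progressively drives the exponent down and yields the claimed $h^{1/2+\epsilon}$ bound for arbitrary $\epsilon > 0$. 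A careful bookkeeping of how $T(\Omega_1^*)$ evolves under successive normalizations, which is more delicate here than in \cite{CLW3} since the Monge-Amp\`ere density is supported only on $\Omega_1^*$ rather than on a full neighborhood of $y_0$, is the technical heart of the argument.
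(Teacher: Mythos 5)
Your proposal follows essentially the same route as the paper, which itself gives no self-contained argument here but simply invokes Lemma~\ref{ud1} together with the corresponding statements in \cite{CLW3} (Corollary 2.7 for the volume estimate, Lemma 4.1 for the tangential $C^{1,1-\epsilon}$ estimate). Your Part (i) sketch is accurate: applying \eqref{la1} and \eqref{ua1} to $Z=S_h^c[v](y_0)$ with $|\inf_Z(v-\tilde\ell)|\approx h$, and combining with the uniform density $|Z\cap\Omega_1^*|\geq\delta|Z|$ and the trivial bound $|Z\cap\Omega_1^*|\leq|Z|$, gives $h^n\approx|Z\cap\Omega_1^*||Z|\approx|Z|^2$; the remaining comparabilities then come from the section equivalence \eqref{equi0} and the uniform density, which also underlies the shape-equivalence claim. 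Your Part (ii) is deliberately sketchy but correctly identifies the mechanism (John normalization, the paraboloid constraint from $C^2$ uniform convexity of $\partial\Omega_1^*$, the upper bound on the diameter of $S^c_h$ from strict convexity, and an iteration across scales to sharpen the exponent to $\tfrac12+\epsilon$), and correctly attributes the technical iteration to \cite[Lemma 4.1]{CLW3}, which is exactly what the paper does; the only imprecision is that you portray a single-scale paraboloid comparison as already giving a bound of the form $h^{1/2+\epsilon_0}$, whereas in reality the one-shot estimate is substantially weaker and the multi-scale bootstrap is doing all the work --- but you flag this as the technical heart, so no genuine gap.
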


\begin{remark}\label{reeq1}
\emph{ By the strict convexity of $v$ (see \eqref{strictconvex}) and the fact that $S^c_h[v]$ is centred at $y_0$,
we have an equivalence relation between $S_h[v]$ and $S^c_{h}[v]$, namely $\forall\,h>0$ small, 
\begin{equation}\label{equi0}
S^c_{b^{-1}h}[v] \cap \Omega_1^*\subset S_h[v] \cap \Omega_1^* \subset S^c_{bh}[v] \cap \Omega_1^*,
\end{equation}
where \( b>1 \) is a constant independent of \( h \). 
The reader is referred to \cite[Lemma 2.2]{CLW1} for a proof. }
\end{remark}

Denote \( \nu = \nu(0) \) and \( \hat\nu = \hat\nu(0) \) as the unit inner normals to \( \partial\Omega_1 \) and \( \partial\Omega_2 \) at \( 0\in\mathcal{F} \), respectively. 
Given that \( \mathcal{F} \) is  \( C^{1,\beta} \) regular, one has \( \nu = -\hat\nu \).
In a similar vein, let \( \nu^* = \nu^*(y_0) \) and \( \hat\nu^* = \hat\nu^*(\hat y_0) \) denote the unit inner normals to \( \partial\Omega^*_1 \) and \( \partial\Omega^*_2 \) at \( y_0 \) and \( \hat y_0 \), respectively.

\begin{proposition}\label{oblique2}
The inner product of the normals, \( \nu \cdot \nu^* \) and \( \hat\nu \cdot \hat\nu^* \), are both strictly positive, i.e., \( \nu \cdot \nu^* > 0 \) and \( \hat\nu \cdot \hat\nu^* > 0 \).
\end{proposition}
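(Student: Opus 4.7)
The plan is to argue by contradiction, assuming the obliqueness fails at $0 \in \mathcal{F}$. Using the remark after the proof of Theorem \ref{t111} (which identifies $p = y_0$ and $\hat p = \hat y_0$), together with the $C^{1,\beta}$ regularity of $\mathcal{F}$ (which gives $\hat\nu = -\nu$) and \eqref{gn1}, combined with the fact that $\Omega_1^*$ and $\Omega_2^*$ lie on opposite sides of the separating hyperplane $\mathcal{H}$, the failure of the proposition reduces to exactly the three scenarios listed in the introduction, namely Cases I, II and III.

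For Cases I and III, I would follow the two-step blow-up scheme sketched in the introduction. First, apply an affine change of coordinates making $\nu, \nu^*$ (and in Case I also $\hat\nu^*$) mutually perpendicular. Then, using Lemma \ref{ud1}, Corollary \ref{co21} (in particular the volume estimate \eqref{vol} and the tangential $C^{1,1-\epsilon}$ estimate \eqref{tanc2}) and the centring of $S_h^c[v]$, I would establish a $(3+\epsilon)$-uniform convexity of $v$ in the normal direction at $y_0$ and at $\hat y_0$; via the Legendre duality \eqref{dual222} this transfers to a $C^{1,\frac12-\epsilon}$ estimate for $u_1$ and $u_2$ at $0$, hence for $\mathcal{F}$ at $0$ by \eqref{gn1}. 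A careful cross-comparison of the shape of $S_h^c[v]$ at $y_0$ with the tangential $C^{1,1-\epsilon}$ estimate at $\hat y_0$ then yields the ``above the tangent'' property for $\mathcal{F}\cap \text{span}\{\nu,\nu^*\}$. In the second step, I would rescale $\Omega_1$, $\Omega_1^*$ using the John-lemma normalisations of the sections $S_h^c[v]$ as the blow-up parameters: the $C^{1,\frac12-\epsilon}$ control of $\mathcal{F}$ forces $\mathcal{F}$ to flatten in $n-2$ tangential directions, while ``above the tangent'' keeps the limiting domains well-positioned. A secondary blow-up then produces cylindrical limits for both $\Omega_1$ and $\Omega_1^*$, which are precisely the configurations ruled out in \cite{CLW1, CLW3}, giving the contradiction.

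For Case II, $\nu^*$ and $\hat\nu^*$ are parallel, so I would first exploit the presence of the separating hyperplane $\mathcal{H}$, the uniform density Lemma \ref{ud1} and the volume estimate \eqref{vol} to show that the $\nu^*$-width of $S_h^c[v]$ is much smaller than its tangential widths. Coupled with \eqref{gn1} and the disjointness of $\Omega_1^*$ and $\Omega_2^*$, this width estimate forces $\mathcal{F}$ to become progressively flatter in the $\nu^*$-direction along the blow-up sequence. Working on the $u_1$ side, however, a strict convexity estimate for the John-normalisations of $u_1$ that is independent of the blow-up scale (using \eqref{Asolv} and Caffarelli's interior strict convexity argument) rules out such flattening, producing the contradiction.

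The main obstacle I expect is the ``above the tangent'' step in Cases I and III: extracting a one-sided geometric statement about $\mathcal{F}$ by matching the normal-direction shape of $S_h^c[v]$ at $y_0$ against the tangential regularity of $v$ at $\hat y_0$ is delicate because, unlike in \cite{C96,CLW1,CLW3,SY}, $\mathcal{F}$ possesses no a priori convexity; one must track three interrelated scales (the two normal widths at $y_0$ and $\hat y_0$ and the common tangential widths) precisely enough to survive two successive blow-ups.
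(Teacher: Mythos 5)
Your proposal reproduces the paper's proof structure faithfully: the same reduction to Cases I--III, the same two-stage blow-up (with the $(3+\epsilon)$-uniform convexity of $v$ at $y_0$ and $\hat y_0$, the $C^{1,\frac12-\epsilon}$ transfer to $u_i$ and hence to $\mathcal F$, the ``above the tangent'' step, and the reduction to the cylindrical configurations of \cite{CLW1,CLW3}) for Cases I and III, and the same width-versus-strict-convexity contradiction for Case II.

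The one place where your sketch is looser than the paper, and where I think a genuine difficulty hides, is the Case II strict-convexity estimate. You propose to obtain it directly on the $u_1$ side from \eqref{Asolv} and Caffarelli's interior strict-convexity argument, but the estimate is actually needed at the point $p_{t_0}\in\mathcal F_k$, i.e.\ on $\overline{\Omega_{k1}}$ including its free boundary. Caffarelli's interior argument on $u_1$ does not reach up to $\mathcal F$ without a uniform-density estimate for sections of $u_1$ centred on $\mathcal F$, and the paper establishes such an estimate (Lemma \ref{ud3}) only in Section \ref{S5}, \emph{after} and \emph{using} the obliqueness --- invoking it here would be circular. The paper circumvents this by working on the dual side: it derives the two-sided strict-convexity estimate for the rescaled $v_k$ near $z_k$ (Lemma \ref{vest1}), using the convexity of $\Omega_1^*$ together with the uniform interior cone \eqref{conek} to get the exterior bound, and then transfers it to $u_{k1}-l_kx_n$ via Legendre duality (Lemma \ref{alpha1}, giving \eqref{vust3}). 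The contradiction is then extracted by carefully tracking the point $p_{t_0}\in\mathcal F_k$ in the $e_1$-direction (Lemmas \ref{limitdist}, \ref{ests} and the concluding computation with $\tilde u_k$). If you run the argument with $v_k$ rather than $u_{k1}$, your plan closes.

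Everything else, including the delicacy you flag about tracking three interrelated scales through two successive blow-ups to obtain ``above the tangent,'' is exactly what Lemmas \ref{ide1}--\ref{saddle} and the two blow-up subsections do.
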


By a change of coordinates, we can assume $\nu=e_n$. Then from \eqref{gn1}, it follows that 
	\[ e_n = \frac{y_0 - \hat y_0}{|y_0 - \hat y_0|}. \]
By subtracting a linear function from $u$, we can assume \( y_0 = l e_n \), \( \hat y_0 = -l e_n \) for a constant \( l \geq \frac{1}{2}\text{dist}(\Omega_1^*, \Omega_2^*)>0 \).
The proof of Proposition \ref{oblique2} proceeds by contradiction and will occupy most of the remainder of the paper. In particular, we need to address three distinct cases, which are addressed in Sections 4.1, 4.2, and 4.3, respectively.  

Without loss of generality, suppose the obliqueness conditions are not satisfied at $y_0\in \partial\Omega^*_1$, namely $e_n\cdot \nu^*=0$.
By a rotation of coordinates, we may assume that \( \nu^* = e_1 \).

Given that \( \mathcal{F} \in C^{1,\beta} \), locally near $0$, \( \mathcal{F} \) can be expressed as 
\begin{equation}\label{calf1}
 \mathcal{F} = \{x\in\mathbb{R}^n : x_n = \rho(x_1, \ldots, x_{n-1})\}
 \end{equation}
for a function \( \rho \in C^{1,\beta} \) satisfying $\rho(0)=0$ and $D\rho(0)=0$. 

Given that \( \partial\Omega^*_1 \in C^2 \) is uniformly convex, locally near \( y_0 \), \( \partial\Omega^*_1 \) can be described as
  \begin{equation}\label{ostar1}
  \partial\Omega^*_1 = \{y\in\mathbb{R}^n : y_1 = \rho^*(y_2, \ldots, y_n-l)\}
  \end{equation}
for a uniformly convex function \( \rho^* \in C^2 \) satisfying $\rho^*(0)=0$ and $D\rho^*(0)=0$.

Considering the convexity of \( v \) and the condition \( Dv(y_0) = 0 \), we may further adjust \( v \) by adding a constant to ensure \( v(y_0) = 0 \) and \( v \geq 0 \).
To proceed, we divide our analysis into three distinct cases:

\begin{description}
    \item[Case I]  $\nu\cdot \nu^*=0,\ \hat\nu \cdot \hat\nu^* = 0$, and $\nu, \nu^*, \hat{\nu}^*$ are non-coplanar.
    \item[Case II] $\nu\cdot \nu^*=0,\ \hat\nu \cdot \hat\nu^* = 0$, and $\nu, \nu^*, \hat\nu^*$ are coplanar.
 \item[Case III]  $\nu\cdot \nu^*=0,\ \hat\nu \cdot \hat\nu^* > 0$.
\end{description}

\subsection{Case I:  $\nu\cdot \nu^*=0,\ \hat\nu \cdot \hat\nu^* = 0$, and $\nu, \nu^*, \hat{\nu}^*$ are non-coplanar.}

Since \( \nu \), \( \nu^* \), and \( \hat{\nu}^* \) are non-coplanar, there exists an affine transformation \( A \) with \( \det\,A = 1 \), such that \( A\nu^* \cdot A\hat{\nu}^* =0 \). Note that \( u(A^{-1}x) \) is the potential function for the optimal transport from \( A\Omega \) to \( (A^t)^{-1}\Omega^* \), 
\begin{itemize}
\item \( \frac{(A^t)^{-1}\nu}{|(A^t)^{-1}\nu|} \) is the unit inner normal of \( A\Omega_1 \) at \( 0 \), 
\item \( \frac{A\nu^*}{|A\nu^*|} \) is the unit inner normal of \( (A^t)^{-1}\Omega_1^* \) at \( (A^t)^{-1}y_0 \), 
\item and \( \frac{A\hat{\nu}^*}{|A\hat{\nu}^*|} \) is the unit inner normal of \( (A^t)^{-1}\Omega_2^* \) at \(  (A^t)^{-1}\hat{y}_0 \). 
\end{itemize}
One can verify that \( \frac{(A^t)^{-1}\nu}{|(A^t)^{-1}\nu|} \), \( \frac{A\nu^*}{|A\nu^*|} \), and \( \frac{A\hat{\nu}^*}{|A\hat{\nu}^*|} \) are mutually perpendicular to each other.

Hence, under an appropriate affine transformation and a rotation of coordinates, we can assume that \( y_0 \) and \( \hat{y}_0 \) are as before, and \( \mathcal{F} \), \( \partial\Omega^*_1 \) are as in \eqref{calf1} and \eqref{ostar1} respectively, and locally near \( \hat{y}_0 \), the boundary of \( \Omega_2^* \)  is represented as
\[
\partial\Omega_2^* = \left\{ y\in\mathbb{R}^n : y_2 = \rho_2^*(y_1, y_3, \ldots, y_{n-1}, y_n + l) \right\}
\]
for a uniformly convex function \( \rho_2^* \in C^2 \) satisfying \( \rho_2^*(0) = 0 \) and \( D\rho_2^*(0) = 0 \).

Recall the sub-level set $S_h[v]=S_h[v](y_0)=\{v<h\}$ in \eqref{sub}.
Define the width function
\[
d_e := \sup \left\{ |(y-y_0) \cdot e| : y \in S_h[v] \cap \Omega_1^* \right\}\quad \mbox{ for } e\in\mathbb{S}^{n-1}. 
\]
Let $p_e \in \overline{S_h[v] \cap \Omega_1^*}$ be a point where $|(p_e-y_0) \cdot e| = d_e$. When specifically considering the unit vector $e_1$, we shall denote $p_{e_1}$ simply as $p=(p_1, \cdots, p_n)$.

\begin{lemma}\label{ide1}
For any $\epsilon>0$ small, there exists a constant $C_\epsilon$ such that 
	\[ d_{e_1} \leq C_{\epsilon}h^{\frac{2}{3} - \epsilon}. \]
Moreover, for any unit vector $e \in \mathrm{span}\{e_2, \ldots, e_n\}$,  
	\[ d_e^2 \leq C d_{e_1} \] 
for some universal constant $C$. 
\end{lemma}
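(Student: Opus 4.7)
\textbf{Proof plan for Lemma \ref{ide1}.} I begin with the easier second assertion, which follows from the uniform convexity of $\partial\Omega_1^*$ at $y_0$. For any unit vector $e\in\mathrm{span}\{e_2,\ldots,e_n\}$, select $y\in\overline{S_h[v]\cap\Omega_1^*}$ attaining $(y-y_0)\cdot e=d_e$. Since $y\in\overline{\Omega_1^*}$ and $e\perp e_1$, the parametrisation \eqref{ostar1} together with uniform convexity of $\rho^*$ yields
\[
y_1\geq \rho^*(y_2,\ldots,y_n-l)\geq c_0|(y-y_0)_{\text{tan}}|^2\geq c_0((y-y_0)\cdot e)^2=c_0 d_e^2,
\]
and combining this with $y_1\leq d_{e_1}$ gives $d_e^2\leq C\,d_{e_1}$ with the universal constant $C=1/c_0$.

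For the first assertion, set $D:=d_{e_1}$. The plan is to combine four ingredients: (i) the volume estimate $|S_h[v]\cap\Omega_1^*|\approx h^{n/2}$ from Corollary \ref{co21}; (ii) the uniform density in Lemma \ref{ud1}; (iii) the tangential $C^{1,1-\epsilon}$ estimate \eqref{tanc2}; and (iv) the uniform convexity of $\partial\Omega_1^*$ encoded in \eqref{ostar1}. Normalise the centred section by an affine $\mathcal{A}$ with $B_1\subset\mathcal{A}(S^c_h[v]-y_0)\subset B_n$, aligned so that the contraction factors in $e_1$ (normal) and $e_j$ (tangential, $j\geq 2$) are respectively $a_1\approx D^{-1}$ and $a_j$, with $a_1\prod_{j\geq 2}a_j\approx h^{-n/2}$ by (i). The tangential estimate \eqref{tanc2} forces $a_j\leq C_\epsilon h^{-1/2-\epsilon}$, which already yields the baseline $D\leq C_\epsilon h^{1/2-(n-1)\epsilon}$; this is weaker than the target bound.

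To sharpen the baseline, I pass to the slicing picture. By (iv), every $y_1$-slice of $S_h[v]\cap\Omega_1^*$ at height $t\leq D$ is contained in the disc $\{|y'|^2\leq t/c_0\}$, so its $(n-1)$-volume is at most $C t^{(n-1)/2}$. The matching lower bound on slice areas follows by extending the tangential $C^{1,1-\epsilon}$ estimate \eqref{tanc2} from the tangent plane $\mathcal{H}$ into $\Omega_1^*$: for $y=y_0+y'+t\,e_1$ with $|y'|\leq C_\epsilon h^{1/2+\epsilon}$ and $c_0|y'|^2\leq t\leq c\,h$, the Lipschitz continuity of $v$ yields $v(y)\leq C'h$, and this, together with the already-proved second assertion (which ties tangential and normal scales of $S_h[v]\cap\Omega_1^*$ via $r\leq C D^{1/2}$), upgrades each slice area to a comparable $t^{(n-1)/2}$ on a definite range of $t$. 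Integrating and matching with (i) then gives $D\leq C_\epsilon h^{n/(n+1)-\epsilon'}$; since $n/(n+1)\geq 2/3$ for every $n\geq 2$, this implies the claimed $D\leq C_\epsilon h^{2/3-\epsilon}$.

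The principal obstacle is precisely this two-sided slice area estimate: the naive volume-plus-tangential argument only delivers $D\lesssim h^{1/2}$, and extracting the optimal $n/(n+1)$-exponent requires carefully propagating the $C^{1,1-\epsilon}$ behaviour of $v$ from the tangent plane into $\Omega_1^*$ through the Lipschitz bound, simultaneously with the quadratic pinching $\rho^*(y')\geq c_0|y'|^2$ and with the quantitative use of Lemma \ref{ud1}, in order to force the cross-sectional areas of $S_h[v]\cap\Omega_1^*$ to saturate the upper bound $C t^{(n-1)/2}$.
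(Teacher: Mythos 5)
Your proof of the second assertion ($d_e^2 \leq C d_{e_1}$) is correct and is essentially the paper's argument: it is a direct consequence of the $C^2$ uniform convexity of $\partial\Omega_1^*$ at $y_0$. The first assertion, however, has a genuine gap. Your slicing scheme needs a two-sided estimate $\bigl|\{y\in S_h[v]\cap\Omega_1^*: y_1=t\}\bigr|\approx t^{(n-1)/2}$ for $t$ comparable to $D:=d_{e_1}$, but the lower bound fails there. The tangential estimate \eqref{tanc2} together with Lipschitz continuity of $v$ only shows that $y_0+y'+te_1$ with $|y'|\lesssim h^{1/2+\epsilon}$ and $\rho^*(y')<t\lesssim h$ lies in $S_{C'h}[v]\cap\Omega_1^*$; this reaches only heights $t\lesssim h$, and even there the slice radius is bounded below by $\min\{h^{1/2+\epsilon}, t^{1/2}\}$, not by $t^{1/2}$ uniformly. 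For $t\gg h$ — and the relevant $D$ is of order $h^{2/3}\gg h$ — the Lipschitz step gives $v(y)\lesssim t\gg h$, so such $y$ need not lie in $S_h[v]$ and no lower bound on the slice is obtained. The second assertion $d_e\lesssim D^{1/2}$ only gives an upper bound on tangential widths, so it cannot supply the missing lower bound either. Moreover, the inequality direction is off: the convexity upper bound $|\text{slice}_t|\lesssim t^{(n-1)/2}$ combined with $|S_h[v]\cap\Omega_1^*|\approx h^{n/2}$ yields, upon integration, the lower bound $D\gtrsim h^{n/(n+1)}$, not an upper bound. The dimension-dependent target exponent $n/(n+1)$ (versus the lemma's dimension-free $2/3$) is a further sign the argument is not tracking the right geometry.

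The missing idea in your proposal is the paper's crucial observation that $v(0)=0$: since $v\geq 0$ is convex, $v(y_0)=v(\hat y_0)=0$, and $0$ is the midpoint of $y_0,\hat y_0$, the origin lies in $\overline{S_h[v]}$ for every $h$. This supplies a reference point at distance $l$ from $y_0$. The segment from $0$ to the extremal point $p$ (with $p_1=d_{e_1}$) crosses $\partial\Omega_1^*$ at a point $q\in\overline{S_h[v]}$ with $q_1\geq\frac12 d_{e_1}$. The planar region $D$ cut out by $\partial\Omega_1^*$ and the chord $\overline{y_0q}$ in $\mathrm{span}\{e_1,e_q\}$ has $\mathcal{H}^2(D)\gtrsim d_{e_1}^{3/2}$ by uniform convexity; taking the convex hull $G$ of $D$ with the $(n-2)$ tangential points $y_0+C_\epsilon h^{1/2+\epsilon}\tilde e_i$, one has $G\subset S^c_{bh}[v]$, hence $h^{(1/2+\epsilon)(n-2)}d_{e_1}^{3/2}\lesssim |G|\leq |S^c_{bh}[v]|\approx h^{n/2}$, which gives $d_{e_1}\lesssim h^{2/3-O(\epsilon)}$. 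Your proposal never exploits $0\in S_h[v]$ and so cannot produce the $d_{e_1}^{3/2}$ two-dimensional area lower bound that drives this estimate.
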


\begin{proof}
Let \( p \) be as above. Consider \( q \), the point of intersection between the segment \( \overline{op} \) and \( \partial\Omega_1^* \). 
Since both \( p \) and the origin are in \( \overline{S_h[v]} \), by convexity one has \( q \in \overline{S_h[v]} \). Thus,
\[ q \in \overline{S_h[v]} \cap \partial\Omega_1^*. \]
Recall that \( d_{e_1} = p_1 \). 
As the intersection \( S_h[v] \cap \Omega_1^* \) converges to \( y_0 \) as \( h\rightarrow 0 \), we deduce that \( \frac{q_n}{p_n} \geq \frac{1}{2} \) for sufficiently small \( h \). Consequently,
\[ q_1 \geq \frac{q_n}{p_n} p_1 \geq \frac{1}{2} d_{e_1}. \] 

Define \( e_q := \frac{q - y_0}{|q - y_0|} \). 
Let \( D \subset \mathrm{span}\{e_q, e_1\} \) be the planar region bounded by \( \partial\Omega_1^* \cap \mathrm{span}\{e_q, e_1\} \) and the segment \( \overline{y_0q} \). 
Since \( \partial\Omega_1^*\in C^2 \) is uniformly convex, one has 
	\[ \mathcal{H}^2(D) \geq C|q_1|^{\frac{3}{2}} \geq Cd_{e_1}^{3/2} \] 
for a constant \( C \) independent of \( h \), where \( \mathcal{H}^2(\cdot) \) denotes the 2-dimensional Hausdorff measure, see Figure 4.1.

Select \( \{\tilde{e}_2, \ldots, \tilde{e}_{n-1}\} \) to be an orthonormal basis for the orthogonal complement of \( \mathrm{span}\{e_1, e_p\} \) in \( \mathbb{R}^n \). The tangential \( C^{1, 1-\epsilon} \) estimate of \( v \) asserts that
\[ y_0 + C_\epsilon h^{\frac{1}{2}+\epsilon}\tilde{e}_i \in S^c_{bh}[v] \quad \text{for} \quad i=2, \ldots, n-1, \]
where \( b \) is the constant in \eqref{equi0}.
Define \( G \) to be the convex hull of \( D \) and points \(\{ y_0 + C_\epsilon h^{\frac{1}{2}+\epsilon}\tilde{e}_i, \, i=2, \ldots, n-1 \}\). By convexity, it follows that \( G \subset S^c_{bh}[v] \).

Combining these results and the inclusion $S_h[v]\cap \Omega_1^* \subset S^c_{bh}[v]$, we obtain:
\[
C_\epsilon h^{(\frac{1}{2}+\epsilon)(n-2)}\mathcal{H}^2(D) \leq |G| \leq |S^c_{bh}[v]| \approx h^{\frac{n}{2}},
\]
which in turn implies $|d_{e_1}| \leq C_{\epsilon}h^{\frac{2}{3} - \epsilon}$.

Fix any unit vector $e \in \mathrm{span}\{e_2, \ldots, e_n\}$. The uniform convexity of $\partial\Omega_1^*$ yields 
	\[ d_{e_1} \geq (p_e-y_0) \cdot e_1 \geq C_1((p_e-y_0) \cdot e)^2 = C_1d_e^2  \]
for some universal constant $C_1>0$, which implies $d_e^2 \leq C d_{e_1}$ for a universal constant $C$.
\end{proof}

\begin{corollary}\label{decayrate1}
For all $y \in \Omega_1^*$, the inequality $v(y) \geq C_\epsilon |y - y_0|^{3 + \epsilon}$ holds.
\end{corollary}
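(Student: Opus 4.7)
The plan is to invert the width estimates from Lemma \ref{ide1}. Since $v(y_0)=0$, $Dv(y_0)=0$, and $v\geq 0$, the sub-level set $S_h[v](y_0)$ coincides with $\{v<h\}$, so it suffices to bound $|y-y_0|$ in terms of $h$ whenever $y\in \{v<h\}\cap \Omega_1^*$, and then set $h=v(y)$.

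For such a $y$, I would decompose $y-y_0 = \alpha_1 e_1 + w$ with $w\in \mathrm{span}\{e_2,\ldots,e_n\}$. The first bound of Lemma \ref{ide1} controls the normal coordinate by $|\alpha_1| \leq d_{e_1} \leq C_\epsilon h^{2/3-\epsilon}$. Applying the second bound with $e=w/|w|$ controls the tangential coordinate by $|w|\leq d_e \leq C\,d_{e_1}^{1/2}\leq C_\epsilon' h^{1/3-\epsilon/2}$. Combining these two estimates yields
$$ |y-y_0| \leq C_\epsilon'' \, h^{1/3-\epsilon/2}\qquad \forall\, y\in \{v<h\}\cap \Omega_1^* $$
for $h$ sufficiently small.

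Inverting this relation produces $v(y)\geq c_\epsilon\, |y-y_0|^{6/(2-3\epsilon)}$ for $y\in \Omega_1^*$ close to $y_0$. For any prescribed $\tilde\epsilon>0$, I would choose $\epsilon>0$ small enough that $\tfrac{6}{2-3\epsilon}\leq 3+\tilde\epsilon$, which yields the advertised bound locally. The extension to all of $\Omega_1^*$ uses that $v$ is strictly positive on $\overline{\Omega_1^*}\setminus\{y_0\}$: by the strict convexity of $v$ in $\Omega_1^*\cap B_{\bar r_1}(y_0)$ together with the uniform convexity of $\Omega_1^*$, the convex zero set $\Sigma$ can meet $\overline{\Omega_1^*}$ only at $y_0$ (any other zero would, via convexity of $\Sigma$, force $v\equiv 0$ on a segment crossing $\Omega_1^*\cap B_{\bar r_1}(y_0)$, a contradiction). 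Compactness then gives a uniform positive lower bound on $v$ over $\overline{\Omega_1^*}\setminus B_r(y_0)$, which, combined with the boundedness of $\Omega_1^*$, absorbs the polynomial factor after adjusting $C_{\tilde\epsilon}$.

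Both inputs (Lemma \ref{ide1} and the strict convexity of $v$ near $y_0$) are already in hand, so this is essentially a routine algebraic inversion of the shape estimates; I do not anticipate any substantive obstacle.
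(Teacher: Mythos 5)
Your proof is correct and follows essentially the same route as the paper: it inverts the two width bounds of Lemma~\ref{ide1} to show that $S_h[v]\cap\Omega_1^*\subset B_{C_\epsilon h^{1/3-\epsilon'}}(y_0)$ and then reads off the lower bound $v(y)\gtrsim |y-y_0|^{3+\epsilon}$ near $y_0$, extending to all of $\Omega_1^*$ by positivity/compactness. The paper states this step more tersely, but the content is the same; your additional explanation of why $v$ is strictly positive on $\overline{\Omega_1^*}\setminus\{y_0\}$ is a welcome, and correct, piece of bookkeeping that the paper leaves implicit.
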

\begin{proof}
From Lemma \ref{ide1}, we deduce that 
	\[ v(y) \geq h \quad \forall\, y \in \Omega_1^*\  \mbox{ with } |y-y_0| \geq C_\epsilon h^{\frac{1}{3} - \epsilon}. \]
Consequently, this implies \( v(y) \geq C_\epsilon |y - y_0|^{3 + \epsilon} \) for all \( y \in \Omega_1^* \).
\end{proof}

\begin{remark}
\emph{ Analogously, one has $v(y) \geq C_\epsilon |y - \hat{y}_0|^{3 + \epsilon}$ holds for all $y \in \Omega_2^*$. }
\end{remark}

Recalling \eqref{calf1}, we have the following estimate for $\rho$:
\begin{lemma}\label{decayrate2}
The function $\rho$ satisfies $|\rho(x')| \leq C_\epsilon |x'|^{\frac{3}{2} - \epsilon}$, $x'=(x_1,\cdots,x_{n-1})$.
\end{lemma}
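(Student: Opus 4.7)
The plan is to exploit Legendre duality together with the cubic lower bound for $v$ from Corollary \ref{decayrate1} (and its analogue at $\hat y_0$) in order to quantify how closely $u_1$ and $u_2$ hug their supporting affine functions at $0$. Since $\mathcal{F}=\{u_1=u_2\}$ locally, any such quantitative flatness transfers directly into the desired flatness of the graph $\rho$. Recall the standing normalizations $u_1(0)=u_2(0)=0$, $v(y_0)=v(\hat y_0)=0$, $v\geq 0$, $Du_1(0)=y_0=le_n$, and $Du_2(0)=\hat y_0=-le_n$.

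The core step is the upper bound
\[
u_1(x)-y_0\cdot x \;\leq\; C_\epsilon\,|x|^{\frac{3+\epsilon}{2+\epsilon}}, \qquad u_2(x)-\hat y_0\cdot x \;\leq\; C_\epsilon\,|x|^{\frac{3+\epsilon}{2+\epsilon}}.
\]
For $u_1$, I would start from the dual formula $u_1(x)=\sup_{y\in \Omega_1^*}(y\cdot x - v(y))$ and, writing $z=y-y_0$, estimate
\[
u_1(x)-y_0\cdot x \;=\; \sup_{y\in\Omega_1^*}\bigl((y-y_0)\cdot x - v(y)\bigr) \;\leq\; \sup_{z\in\R^n}\bigl(z\cdot x - C_\epsilon|z|^{3+\epsilon}\bigr) \;\leq\; C_\epsilon'\,|x|^{\frac{3+\epsilon}{2+\epsilon}},
\]
where the last bound is a routine Young-type optimization in $|z|$; the same argument applied at $\hat y_0$ gives the $u_2$ estimate via the remark following Corollary \ref{decayrate1}. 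The matching lower bounds $u_1(x)\geq y_0\cdot x$ and $u_2(x)\geq \hat y_0\cdot x$ are immediate from the convexity of each $u_i$ together with $y_0\in\partial^-u_1(0)$, $\hat y_0\in\partial^- u_2(0)$ and $u_1(0)=u_2(0)=0$.

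Restricting to $x\in\mathcal{F}$, where $u_1(x)=u_2(x)$, and combining the upper bound for one $u_i$ with the lower bound for the other produces
\[
2l|x_n| \;=\; |(y_0-\hat y_0)\cdot x| \;\leq\; C_\epsilon''\,|x|^{\frac{3+\epsilon}{2+\epsilon}}\qquad \forall\,x\in\mathcal{F}\text{ near }0.
\]
For $x=(x',\rho(x'))\in\mathcal{F}$, the $C^{1,\beta}$ regularity of $\mathcal{F}$ from Theorem \ref{t111} gives $|x|\approx|x'|$ for small $|x'|$, so $|\rho(x')|\leq C|x'|^{(3+\epsilon)/(2+\epsilon)}\leq C_\epsilon|x'|^{3/2-\epsilon}$ after relabeling $\epsilon$. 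The only delicate point is the enlargement of the supremum from $\Omega_1^*$ to $\R^n$ in the dual estimate above; this is legitimate because the bound $v(y)\geq C_\epsilon|y-y_0|^{3+\epsilon}$ need only hold inside $\Omega_1^*$ (which is where Corollary \ref{decayrate1} provides it), while enlarging the domain of the supremum of the explicit envelope $z\cdot x - C_\epsilon|z|^{3+\epsilon}$ can only increase its value, thus preserving the one-sided bound we need.
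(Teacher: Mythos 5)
Your proof is correct, and it takes a genuinely different route from the paper. The paper also starts from the two-sided estimate $0 \leq u_1(x) - lx_n \leq C_\epsilon|x|^{3/2-\epsilon}$ (and its analogue for $u_2 + lx_n$), but then uses convexity to downgrade to a gradient estimate $|D(u_1 - lx_n)| \leq C_\epsilon|x|^{1/2-\epsilon}$, plugs this into the explicit formula $\nu(x) = (-D_{x'}\rho, 1)/\sqrt{|D_{x'}\rho|^2+1}$ for the unit normal along $\mathcal{F}$ to bound $|D_{x'}\rho(x')| \leq C_\epsilon|x'|^{1/2-\epsilon}$, and finally integrates. You instead observe that, on $\mathcal{F}$ where $u_1 = u_2$, chaining the upper bound for $u_1$ with the lower bound $u_2 \geq -lx_n$ (and vice versa) yields directly $2l|x_n| \leq C_\epsilon|x|^{(3+\epsilon)/(2+\epsilon)}$, which with $|x| \approx |x'|$ (from $C^{1,\beta}$) gives the claim. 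This bypasses the gradient-estimate and integration steps entirely, is shorter, and is arguably cleaner; the paper's version produces the intermediate estimates \eqref{condes}--\eqref{pest1} as a by-product, but these are not reused elsewhere, so nothing is lost. Your handling of the supremum enlargement from $\Omega_1^*$ to $\mathbb{R}^n$ is also correctly justified, and the elementary check that $(3+\epsilon)/(2+\epsilon) \geq 3/2 - \epsilon$ for small $\epsilon$ closes the exponent bookkeeping.
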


\begin{proof}
Recall that \( Du_1(0)=y_0=l e_n \) and \( u_1(0)=0 \). For any point \( x \in \mathbb{R}^n \), one has:
\begin{align*}
0 &\leq u_1(x) - l x_n \\
  &= \sup_{y \in \Omega_1^*} \left( x \cdot (y - y_0) - v(y) \right) \\
  &\leq \sup_{y \in \Omega_1^*} \left( x \cdot (y - y_0) - C_\epsilon |y - y_0|^{3 + \epsilon} \right) \\
  &\leq \sup_{y \in \mathbb{R}^n} \left( x \cdot y - C_\epsilon |y|^{3 + \epsilon} \right) \\
  &\leq C_\epsilon |x|^{\frac{3}{2} - \epsilon}.
\end{align*}
Similarly, \( 0 \leq u_2(x) + lx_n \leq C_\epsilon |x|^{\frac{3}{2} - \epsilon} \).
By the convexity of \( u_1 \) and \( u_2 \),
we have:
\begin{equation}\label{condes}
|D(u_1(x) - l x_n)| \leq C_\epsilon |x|^{\frac{1}{2} - \epsilon}, \quad |D(u_2(x) + lx_n)| \leq C_\epsilon |x|^{\frac{1}{2} - \epsilon}.
\end{equation}

Since the singular set $\mathcal{F}$ is $C^{1,\beta}$ regular, we have
\begin{equation}
|\rho(x')| \leq C |x'|^{1 + \beta}.
\end{equation}
This, together with \eqref{condes}, leads to:
\begin{equation}\label{pest1}
|D_{x'}u_i(x)| \leq C_\epsilon |x'|^{\frac{1}{2} - \epsilon}, \quad \text{for } i = 1, 2, \text{ and } x \in \mathcal{F}.
\end{equation}

Now, at any point $x \in \mathcal{F}$, the unit inner normal vector $\nu(x)$ of $\Omega_1$ is explicitly given by:
\begin{equation}
\nu(x) = \frac{(-D_{x'}\rho(x'), 1)}{\sqrt{|D_{x'}\rho(x')|^2 + 1}} = \frac{\left(D_{x'}(u_1(x) - u_2(x)), \partial_{x_n}(u_1(x) - u_2(x))\right)}{|Du_1(x) - Du_2(x)|}.
\end{equation}
By \eqref{pest1}, we then obtain
\begin{equation}
|D_{x'}\rho(x')| \leq C_\epsilon |x'|^{\frac{1}{2} - \epsilon},
\end{equation}
which implies the desired estimate by integration. 
\end{proof}

Next, we prove an ``\emph{above the tangent}'' property, namely \(\mathcal{F}\cap\text{span}\{\nu, \nu^*\}\) is  above its tangent plane at \(0.\) This property is  used in the proof 
of Lemma \ref{gdspl1}, see the argument above \eqref{gp22}.

\begin{lemma}\label{saddle}
There exists a small $t_0 > 0$ such that
\begin{equation}\label{saddle2}
\rho(t, 0, \ldots, 0) > 0 \quad \text{whenever } 0 < |t| < t_0.
\end{equation}
\end{lemma}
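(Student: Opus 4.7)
My plan is to argue by contradiction using the dual formulae $u_i(x) = \sup_{y \in \Omega_i^*}(x \cdot y - v(y))$ together with the standard identity $u = \max(u_1, u_2)$, which gives the characterisation $\Omega_1 = \{u_1 > u_2\} \cap \Omega$ and $\Omega_2 = \{u_2 > u_1\} \cap \Omega$. Suppose the conclusion fails, so there is a sequence $t_k \to 0$ with $\rho(t_k, 0, \ldots, 0) \leq 0$; we may assume $t_k > 0$ (the case $t_k < 0$ is analogous and in fact easier, see below). Setting $q_k := t_k e_1 \in \mathbb{R}^n$, the hypothesis places $q_k \in \overline{\Omega_1}$, whence $u_1(q_k) \geq u_2(q_k)$. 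My aim is to prove sharp asymptotic bounds $u_1(q_k) \leq C_\epsilon t_k^{3 - \epsilon''}$ and $u_2(q_k) \geq c_\epsilon t_k^{2 + \epsilon'''}$; since $3 - \epsilon'' > 2 + \epsilon'''$ for $\epsilon$ small, these flip the inequality and yield the contradiction. The geometric source of the asymmetry is that at $y_0$ the direction $e_1 = \nu^*$ is the inward normal to $\partial\Omega_1^*$, so $v$ grows rapidly inside $\Omega_1^*$ along $e_1$, whereas at $\hat y_0$ the same $e_1$ is tangent to $\partial\Omega_2^*$, so $v$ grows slowly there.

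To bound $u_1(q_k)$ from above, I will convert the width estimate of Lemma~\ref{ide1} into a pointwise lower bound on $v$. If $y \in \overline{\Omega_1^*}$ has $y_1 > 0$ and exceeds the $e_1$-width $d_{e_1}$ of $S_h[v] \cap \Omega_1^*$, then $y \notin S_h[v]$, so $v(y) \geq h$; matching $y_1 \asymp C_\epsilon h^{2/3 - \epsilon}$ gives $v(y) \geq c_\epsilon\, y_1^{3/2 + \epsilon'}$ for some $\epsilon' > 0$. Substituting into $u_1(q_k) = \sup_{y \in \Omega_1^*}(t_k y_1 - v(y))$ and optimising $\sup_{s > 0}(t_k s - c_\epsilon s^{3/2 + \epsilon'})$ then produces $u_1(q_k) \leq C_\epsilon t_k^{3 - \epsilon''}$. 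Contributions from $y$'s with $y_1$ of order one are controlled separately using the global decay estimate $v(y) \geq C_\epsilon |y - y_0|^{3 + \epsilon}$ of Corollary~\ref{decayrate1}, which dominates any linear term $t_k y_1$ once $t_k$ is small.

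The lower bound on $u_2(q_k)$ comes from the tangential $C^{1, 1-\epsilon}$ estimate of Corollary~\ref{co21}(ii) applied at $\hat y_0$: because $\hat\nu^* = e_2$, the direction $e_1$ lies in the tangent plane to $\partial\Omega_2^*$ at $\hat y_0$, and the corollary gives $v(\hat y_0 + s e_1) \leq C_\epsilon |s|^{2 - \epsilon}$. Using the global Lipschitz continuity of $v$, this bound transfers (up to an $O(s^2)$ correction) to the nearby point $\hat y_0 + (s, c s^2, 0, \ldots, 0) \in \overline{\Omega_2^*}$, whose second coordinate is chosen so that the point lies in $\overline{\Omega_2^*}$ by uniform convexity of $\rho_2^*$. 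By continuity the supremum over $\Omega_2^*$ equals that over $\overline{\Omega_2^*}$, so testing against this point and optimising $\sup_{s > 0}(t_k s - 2 C_\epsilon s^{2 - \epsilon})$ yields $u_2(q_k) \geq c_\epsilon t_k^{2 + \epsilon'''}$. For the case $t_k < 0$, the constraint $y_1 \geq 0$ in $\overline{\Omega_1^*}$ near $y_0$ combined with $v \geq 0$ forces $u_1(q_k) = 0$, while the symmetric choice $s < 0$ in the test point at $\hat y_0$ still yields $u_2(q_k) \geq c\, t_k^2 > 0$, so the inequality $u_1 \geq u_2$ fails even more directly. The main technical obstacle I anticipate is making the pointwise lower bound $v \geq c_\epsilon y_1^{3/2 + \epsilon'}$ on $\overline{\Omega_1^*}$ rigorous from the width estimate, and carefully verifying the Lipschitz bootstrap used to pass from the tangent-plane bound to a genuine $\partial\Omega_2^*$-point.
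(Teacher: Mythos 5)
Your argument is correct, and it reaches the contradiction through a mechanism that is dual to the paper's, rather than identical. The paper also starts by placing a nonzero $t$ with $te_1 \in \overline{\Omega_1}$, but it then sets $q := Du_1(te_1) \in \overline{\Omega_1^*}$, $h := v(q)$, and observes that $Dv(q) = te_1$ is the outer normal of $S_h[v]$ at $q$; combined with $q_1 \leq C_\epsilon h^{2/3-\epsilon}$ from Lemma~\ref{ide1}, this squeezes the \emph{entire} sub-level set $S_h[v]$ (including its part near $\hat y_0$) into the slab $\{y_1 \leq C_\epsilon h^{2/3-\epsilon}\}$, which is then contradicted by the tangential $C^{1,1-\epsilon}$ estimate at $\hat y_0$ producing a point of $S_h[v]$ with $y_1 \approx C_\epsilon h^{1/2+\epsilon}$. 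You instead stay on the $x$-side of the duality: from the same two ingredients (Lemma~\ref{ide1} at $y_0$ and Corollary~\ref{co21}(ii) at $\hat y_0$) you derive the asymptotic rates $u_1(te_1) \lesssim t^{3-\epsilon''}$ and $u_2(te_1) \gtrsim t^{2+\epsilon'''}$ via Legendre transforms, and contradict $u_1 \geq u_2$ on $\overline{\Omega_1}$. The paper's version is shorter and avoids any optimisation of exponents, at the price of the extra observation about $Dv(q)$ defining a supporting half-space of $S_h[v]$; yours is more computational but makes the heuristic ``cubic decay on the $\Omega_1$-side vs.\ quadratic decay on the $\Omega_2$-side'' transparent, and it handles $t<0$ in one line from monotonicity of $u_1$ in $e_1$, while the paper appeals to gradient monotonicity. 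A couple of small wording slips that do not affect the logic: the inclusion $\Omega_1^* \subset \{y_1 \geq 0\}$ is global (the hyperplane $\{y_1=0\}$ supports the convex body at $y_0$), not merely a near-$y_0$ fact; and the choice of the correction $cs^2$ in the $e_2$-direction uses the $C^2$ upper bound $\rho_2^*(\cdot) \leq C|\cdot|^2$ rather than uniform convexity (which gives the opposite inequality). You also implicitly use $\overline{\Omega_1}\cap\Omega = \{u_1 \geq u_2\}\cap\Omega$; this follows quickly from $\mathcal{F} = \{u_1=u_2\}\cap\Omega$ and $u_i = u \geq u_j$ on $\Omega_i$, but it is worth stating since the paper never records $u = \max(u_1,u_2)$ explicitly.
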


\begin{proof}
Suppose to the contrary that for any small $t_0>0$, there exists a non-zero $t$ satisfying $|t|<t_0$ such that $t e_1 \in \overline{\Omega_1}$. Define $q := Du_1(te_1) \in \overline{\Omega_1^*}$ and observe that $Dv(q) = te_1$.

If $t < 0$, the uniform convexity of $\Omega_1^*$ implies that
\[(Dv(q) - 0) \cdot (q - y_0) < 0,\]
which contradicts the convexity of $v$. Consequently, one must have $t > 0$.

Now, set $h = v(q)$. Lemma \ref{ide1} implies that $q_1 \leq C_\epsilon h^{\frac{2}{3} - \epsilon}$. Since $Dv(q) = te_1$, the vector $e_1$ is the unit outward normal to the level set $S_h[v]$ at the point $q$. This leads to the inclusion
\begin{equation}\label{saddle1}
S_h[v] \subseteq \left\{ y \in \mathbb{R}^n : y_1 \leq C_\epsilon h^{\frac{2}{3} - \epsilon} \right\}.
\end{equation}

On the other hand, by the tangential $C^{1,1-\epsilon}$ regularity of $v$ at $\hat{y}_0$, one has
\[ p = \hat{y}_0 + C_\epsilon h^{\frac{1}{2} + \epsilon} e_1 \in S^c_{\frac{1}{2}b^{-1}h}[v](\hat{y}_0) \subseteq S_{\frac{h}{2}}[v], \]
where the last inclusion follows from \eqref{equi0}.
Thus, $p \in S_h[v]$ with
\[ p_1 = C_\epsilon h^{\frac{1}{2} + \epsilon} \gg C_\epsilon h^{\frac{2}{3} - \epsilon} \]
for $h>0$ sufficiently small. However, this contradicts \eqref{saddle1} when $h$ (and consequently $t$) is sufficiently small. 
Hence, the lemma is proved. 
\end{proof}

\begin{figure}[h]\label{f4.1}
 \centering
 \includegraphics[scale=0.33]{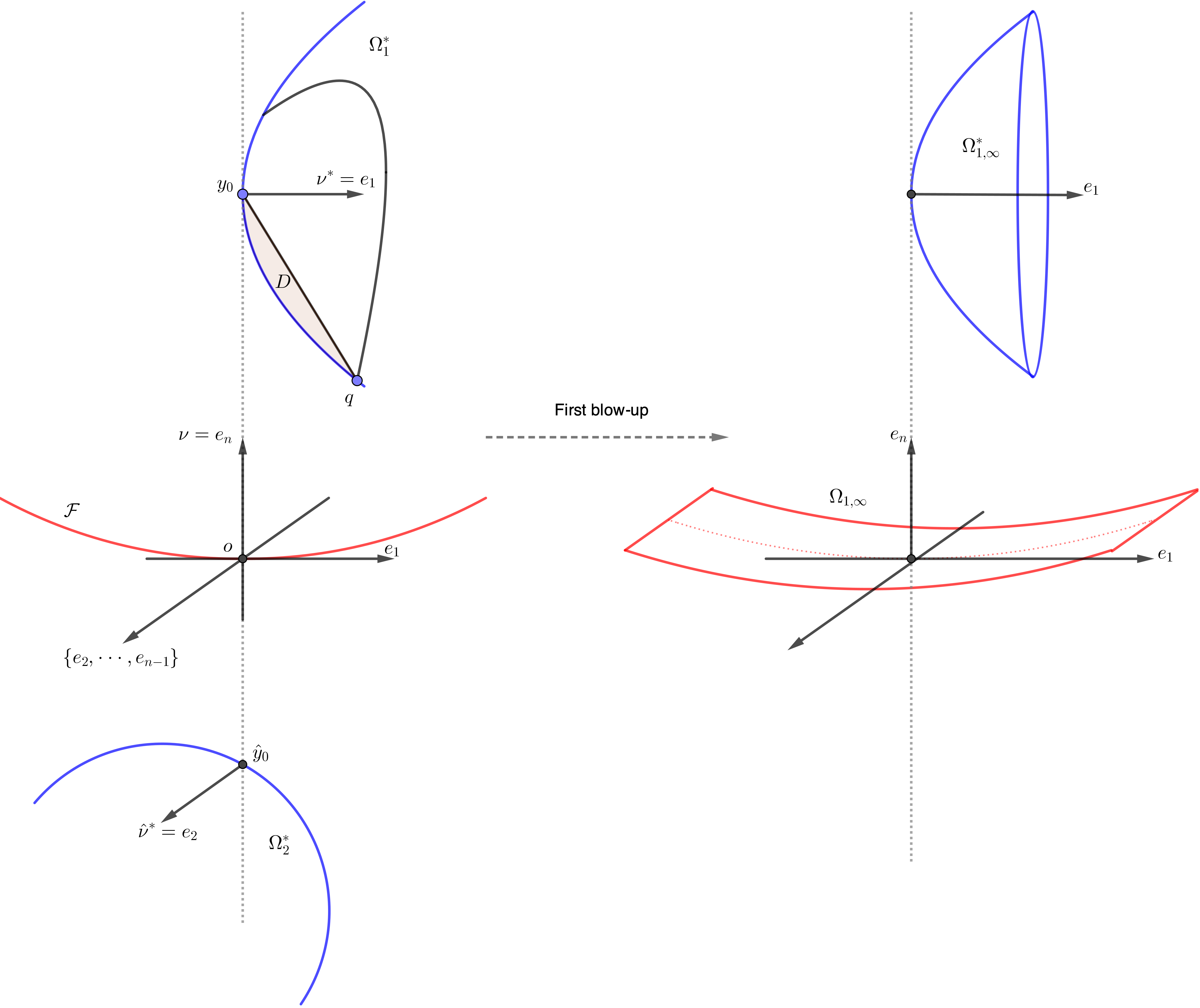}
 \caption{The limit profile after the first blow-up. }
\end{figure}

{\bf First blow-up.}
By a translation of coordinates, we may assume that \( y_0 = 0 \). 
Suppose \( S_h^c[v] \approx E \) (namely \( E \subset S_h^c[v] \subset C_nE \)) for an ellipsoid \( E \) centred at $0$. 
Then, \( E \cap \{y_1 = 0\} \) is an \( (n-1) \)-dimensional ellipsoid aligned along the principal directions \( \bar{e}_2, \ldots, \bar{e}_n \). 
Note that \( \text{span}\{\bar{e}_2, \ldots, \bar{e}_n\} = \text{span}\{e_2, \ldots, e_n\} \).

The ellipsoid $E$ can be expressed as
\begin{equation}\label{elliptan}
E = \left\{y = y_1e_1 + \sum_{i=2}^n \bar{y}_i \bar{e}_i : \frac{y_1^2}{a_1^2} + \sum_{i=2}^n \frac{(\bar{y}_i - k_i y_1)^2}{a_i^2} \leq 1\right\}
\end{equation}
for some constants $a_1, a_i, k_i,$ where $i=2,\cdots, n.$
Applying Lemma \ref{ide1} and the tangential $C^{1,1-\epsilon}$ estimate of $v$ at $0$, we obtain the following bounds:
\begin{equation}\label{ai000}
0 < a_1 \leq C_\epsilon h^{\frac{2}{3}-\epsilon} \quad \text{and} \quad C_\epsilon h^{\frac{1}{2}+\epsilon} \leq a_i \leq C_\epsilon h^{\frac{1}{3}-\epsilon} \text{ for } i=2, \ldots, n.
\end{equation}

Consider the affine transformations \( T_1 \) and \( T_2 \) defined as follows:
\begin{align}
T_1: x = x_1e_1 + \sum_{i=2}^n \bar{x}_i \bar{e}_i &\mapsto z = x_1e_1 + \sum_{i=2}^n (\bar{x}_i - k_ix_1)\bar{e}_i, \label{afT1}\\
T_2: z = z_1e_1 + \sum_{i=2}^n \bar{z}_i \bar{e}_i &\mapsto y = \frac{z_1}{a_1}e_1 + \sum_{i=2}^n \frac{\bar{z}_i}{a_i} \bar{e}_i. \label{afT2}
\end{align}
Setting \( T = T_2 \circ T_1 \), we have \( T(E) = B_1(0) \), and thus \( T(S_h^c[v]) \approx B_1(0) \).

Define the rescaled function \( v_h(y) := \frac{1}{h}v(T^{-1}y) \).
By \eqref{lockey3}, one can verify that
\begin{equation}\label{vhma1}
\det\, D^2v_h(y) = \frac{1}{h^n(\det\, T)^{2}}\tilde{g}_1(T^{-1}y) \quad \forall\, y \in T(B_{\bar{r}_1}(0)).
\end{equation}
Note that 
\( h^n(\det\,T)^{2} \approx 1 \), due to the relation \( T(S_h^c[v]) \approx B_1(0) \).

The gradient of this function transforms as \( Dv_h(x) = T^*Dv(T^{-1}x) \), where \( T^* = \frac{(T^t)^{-1}}{h} = \frac{1}{h}(T_2^t)^{-1}\circ(T_1^t)^{-1} \). A direct computation reveals the inverses of the transposes of \( T_1 \) and \( T_2 \) as
\begin{align*}
(T_1^t)^{-1}&: x = x_1e_1 + \sum_{i=2}^n \bar{x}_i \bar{e}_i \mapsto z = \left(x_1 + \sum_{i=2}^n k_i \bar{x}_i\right)e_1 + \sum_{i=2}^n \bar{x}_i \bar{e}_i, \\
(T_2^t)^{-1}&: z = z_1e_1 + \sum_{i=2}^n \bar{z}_i \bar{e}_i \mapsto y = a_1 z_1e_1 + \sum_{i=2}^n a_i \bar{z}_i \bar{e}_i.
\end{align*}

Let $\Omega_{h1}^* := T(\Omega^*_{1})$. 
Given that the uniform density property (Lemma \ref{ud1}) remains invariant under affine transformations, 
the centred sections of \( v_h \) satisfy
\begin{equation}\label{udvh}
\frac{|S_{\tilde h}^c[v_h] \cap \Omega^*_{h1}|}{|S_{\tilde h}^c[v_h]|} \geq \delta \qquad \forall\, \tilde h \leq 1. 
\end{equation} 
Particularly, since \( S_{1}^c[v_h] = TS^c_h[v] \approx B_1(0) \), \eqref{udvh} implies that
\begin{equation}\label{ballint}
\frac{|B_{C_n}(0) \cap \Omega^*_{h1}|}{|B_1(0)|} \geq \delta.
\end{equation}
Moreover, since $0\in\partial\Omega^*_{h1}$ and $\Omega^*_{h1}$ is convex, by \eqref{ballint} there exists an open convex cone 
	\[ \mathcal{C}=\{x\in\mathbb{R}^n : x\cdot e > c\}\] 
	for some unit vector \(\ e\in\mathbb{S}^{n-1}\),
where $c>0$ is a constant independent of \( h \), such that
\begin{equation}\label{ccin1}
\mathcal{C} \cap B_1(0) \subset \Omega^*_{h1}
\end{equation}
for all sufficiently small \( h \).

We now proceed to establish the following estimates:
\begin{lemma}\label{vesttoday}
There exist constants $\beta \in (0, 1)$ and $a > 1$ such that 
\begin{equation}\label{alpha2h}
\frac{1}{C}|y|^{1+a} \leq v_h(y) \leq C|y|^{1+\beta} \quad\ \forall\, y \in B_{r_0}(0) \cap \Omega_{h1}^*,
\end{equation}
where $C, r_0 > 0$ are constants independent of $h$.
\end{lemma}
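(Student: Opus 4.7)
The plan is to establish the two inequalities separately, both leveraging the fact that $v_h$ inherits from $v$ the crucial structural properties: the Monge–Amp\`ere equation \eqref{vhma1} with bounded density on $\Omega_{h1}^*$, the uniform density \eqref{udvh}, the doubling and geometric decay of centered sections \eqref{geodc}, and the normalized shape $S_1^c[v_h]\approx B_1(0)$. Since $v(y_0)=0$ and $v$ is differentiable at $y_0$ with $Dv(y_0)=0$ (Lemma~\ref{c1lemma}), after translating $y_0$ to the origin and rescaling we have $v_h(0)=0$, $Dv_h(0)=0$, and $v_h\ge 0$.

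For the \emph{lower bound} $v_h(y)\ge\tfrac{1}{C}|y|^{1+a}$: the uniform density \eqref{udvh} provides the doubling of the Monge--Amp\`ere measure of $v_h$ on centered sections, which in turn gives the geometric decay \eqref{geodc} applied to $v_h$ uniformly in $h$. Fix $\bar s\in(0,1)$ small and then choose $s\in(0,s_0)$ with $s<\bar s^2$. Iterating the inclusion $S_{s\tau}^c[v_h](0)\subset \bar s\, S_\tau^c[v_h](0)$ from $\tau=1$ yields
\[
S_{s^k}^c[v_h](0)\subset \bar s^k\,S_1^c[v_h](0)\subset C B_{\bar s^k}(0)\qquad\forall\,k\ge 1.
\]
Combining with the equivalence \eqref{equi0} applied to $v_h$, we obtain $\{v_h< s^k/b\}\cap\Omega_{h1}^*\subset C B_{\bar s^k}(0)$, and interpolating in $k$ gives $v_h(y)\ge\tfrac{1}{C}|y|^{1+a}$ for $y\in \Omega_{h1}^*$ near $0$, where $a=\log s/\log\bar s-1>1$ since $s<\bar s^2$. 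All constants are independent of $h$.

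For the \emph{upper bound} $v_h(y)\le C|y|^{1+\beta}$: the cone containment \eqref{ccin1} ensures that for every $y\in\Omega_{h1}^*\cap B_{r_0}(0)$ with $r:=|y|$ small there is $\tilde y\in\mathcal{C}\cap B_1(0)$ with $|\tilde y|\approx r$ and $\mathrm{dist}(\tilde y,\partial\Omega_{h1}^*)\ge c\,r$. On the Euclidean ball $B_{cr}(\tilde y)\subset\Omega_{h1}^*$ the Monge--Amp\`ere equation of $v_h$ has density bounded above and below by universal constants. A rescaling to unit scale, together with the uniform bound $\|v_h\|_{L^\infty(S_1^c[v_h])}\le C$, lets us apply Caffarelli's interior $C^{1,\beta}$ regularity to obtain a uniform estimate for $v_h$ at scale $r$ around $\tilde y$. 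Since $v_h(0)=0$ and $Dv_h(0)=0$, integrating $|Dv_h|\le Cr^\beta$ along a path from $0$ to $\tilde y$ staying in the cone produces $v_h(\tilde y)\le Cr^{1+\beta}$. For a general $y\in\Omega_{h1}^*\cap B_{r_0}(0)$ not in the cone, we transfer the bound from $\tilde y$ to $y$ using the convexity of $v_h$ together with the bound on $v_h$ over $S_1^c[v_h]\approx B_1(0)$.

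The principal difficulty is the upper bound at points $y$ that lie close to $\partial\Omega_{h1}^*$ near $0$: there, classical interior estimates for $v_h$ degenerate and one cannot appeal directly to boundary $C^{1,\beta}$ regularity because the boundary $\partial\Omega_{h1}^*$ can be highly anisotropic under the affine rescaling $T$. What salvages the argument is precisely the combination of the \emph{uniform} cone opening \eqref{ccin1} at $0$ and the \emph{isotropic} normalized shape $S_1^c[v_h]\approx B_1(0)$: these two ingredients provide enough scale-invariant control to propagate the interior $C^{1,\beta}$ estimate from the cone to the rest of $\Omega_{h1}^*\cap B_{r_0}(0)$ through convexity, with constants independent of $h$.
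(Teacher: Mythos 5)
Your lower-bound argument is sound and essentially the argument the paper has in mind: iterate the (affine-invariant) geometric decay \eqref{geodc} at $0$ to get $S^c_{s^k}[v_h](0)\subset \bar s^k S^c_1[v_h](0)\subset CB_{\bar s^k}(0)$, convert to sub-level sets via \eqref{equi0}, and interpolate. Choosing $s<\min(s_0,\bar s^2)$ ensures $a=\log s/\log\bar s-1>1$, and all constants are $h$-independent. This is in line with the paper's short proof, which simply observes the affine-invariance of \eqref{geodc} and invokes the argument of \cite[Lemma 5.8]{CLW3}.

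The \emph{upper bound} is where your proposal has a genuine gap. The step ``rescaling $B_{cr}(\tilde y)$ to unit scale and applying Caffarelli's interior $C^{1,\beta}$ regularity'' does not work as written, for two reasons. First, the isotropic rescaling $z\mapsto(z-\tilde y)/(cr)$ sends $\det D^2 v_h\approx1$ into $\det D^2 w\approx r^{2n}$; to restore unit density one must also divide $v_h$ by $(cr)^2$, and then the rescaled $L^\infty$ bound becomes $v_h(\tilde y)/(cr)^2$, which is exactly the quantity you are trying to control --- a priori you only know $v_h(\tilde y)\le C$, so this can be as large as $r^{-2}$. Caffarelli's interior $C^{1,\beta}$ theory requires a normalized section, not just a Euclidean ball inside the domain. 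Second, without strict convexity already in hand, the only interior gradient bound available for a convex function is the trivial one, $|Dv_h(\tilde y)|\le C\|v_h\|_{L^\infty}/\mathrm{dist}(\tilde y,\partial\cdot)\lesssim 1/r$, which is far too weak. Finally, even if you had $v_h(\tilde y)\le Cr^{1+\beta}$ on the cone axis, transferring it to a general $y\in\Omega^*_{h1}$ with $|y|\approx r$ ``by convexity and the global $L^\infty$ bound'' only produces a \emph{linear} bound $v_h(y)\le v_h(\tilde y)+\mathrm{Lip}(v_h)|y-\tilde y|\lesssim r$, losing the exponent $1+\beta$.

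The correct route, consistent with the paper's reference, uses the geometric decay together with the affine-invariant volume estimate: the same iteration gives $\mathrm{diam}\,S^c_\tau[v_h]\le C\tau^\alpha$ with $\alpha=\log\bar s/\log s<\tfrac12$, while the uniform density \eqref{udvh} (via the Alexandrov estimate, cf. Corollary~\ref{co21}(i)) gives $|S^c_\tau[v_h]|\approx\tau^{n/2}$. If $E$ is the John ellipsoid of $S^c_\tau[v_h]$, its smallest semi-axis then satisfies $\lambda_1\gtrsim\tau^{n/2}/\lambda_n^{n-1}\gtrsim\tau^{n/2-(n-1)\alpha}$. Choosing $s$ so that $\alpha\in\big(\tfrac{n-2}{2(n-1)},\tfrac12\big)$ makes $\gamma:=n/2-(n-1)\alpha\in(\tfrac12,1)$, hence $B_{c\tau^\gamma}(0)\subset E\subset S^c_\tau[v_h]$; combined with \eqref{equi0}, this yields $v_h(y)\le C|y|^{1/\gamma}=C|y|^{1+\beta}$ with $\beta=1/\gamma-1\in(0,1)$ for $y\in\Omega^*_{h1}$. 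Your proof needs to be revised to replace the interior-regularity-plus-convexity-transfer step by this volume/width estimate (or the equivalent argument in \cite[Lemma 5.8]{CLW3}).
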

\begin{proof}
Note that $v_h$ satisfies $\frac{1}{C}\chi_{\Omega^*_{h1}} \leq \det D^2v_h \leq C\chi_{\Omega^*_{h1}}$ in $B_{r_0}(0)$ for some universal constants $C$ and $r_0$. 
Observe also that the geometric decay property \eqref{geodc} is invariant under rescaling and affine transformations. 
By the same argument of the proof of \cite[Lemma 5.13]{CLW3}, one can obtain the desired estimates \eqref{alpha2h}.
 For the reader's convenience, we outline the main ideas below and refer to \cite{CLW3} for full details. 

Applying \eqref{geodc} with $y=\bar y$ and $\bar s=\frac12$, there exists a constant $\theta<1$, independent of $h$, such that
	\begin{equation}\label{rev1}
	 	S^c_{\theta h}[v](y) \subset \frac12 S^c_h[v](y).
	\end{equation}
Since \eqref{rev1} is invariant under the rescaling \eqref{vhma1}, by iteration we have
	\begin{equation}\label{rev2}
		S^c_{\theta^k \tilde h}[v_h] \subset \frac{1}{2^k} S^c_{\tilde h}[v_h]
	\end{equation}
for $\tilde h<1$ and $k=1,2,...$. For any $y \in B_{r_0}(0) \cap \Omega_{h1}^*$, let $k$ be the positive integer such that $2^{-k}<|y|\leq 2^{-k+1}$. 
Note that by \eqref{vhma1}, we have $B_{1/C}(0)\subset S^c_1[v_h]\subset B_C(0)$. Then, using \eqref{equi0}, \eqref{rev2} and a direct computation, we obtain $v_h(y)\geq C_1|y|^{\beta'}$, where $C_1=b^{-1}\theta^{\frac{\log(2C)}{\log 2}}$, $\beta'=-\frac{\log\theta}{\log 2}$, and $b$ is the constant from \eqref{equi0}. This gives the first inequality in \eqref{alpha2h}. 

For the second inequality in \eqref{alpha2h}, we again use the fact that \eqref{rev1} is invariant under the rescaling \eqref{vhma1}, independent of $h>0,$ to deduce that the limit of $v_h$ is strictly convex. By a compactness argument, there exists a constant $\delta\in(0,1)$ such that $v_h(\frac12y)\leq \frac12(1-\delta)v_h(y)$ for $y \in B_{r_0}(0) \cap \Omega_{h1}^*$. Iterating this inequality yields $v_h(\frac{1}{2^k}y) \leq \frac{1}{2^k}(1-\delta)^kv_h(y)$.  Hence, there exist constants $\alpha'\in(0,1]$ and $C_2>0$, independent of $h$, such that $v_h(y)\leq C_2|y|^{1+\alpha'}$. 

\end{proof}

Let $T^*=\frac{1}{h}(T^t)^{-1}$.
Denote by $\Omega_{h1}$ the set $T^*\Omega_1$. 
By the above strict convexity estimate of $v_h$ in \eqref{alpha2h}, we have the inclusion 
	$$B_r(0) \cap \Omega_{h1} \subset Dv_h(B_{1}(0) \cap \Omega^*_{h1})$$
for some small constant $r > 0$ independent of $h$. Following a rescaling argument similar to the proof of \cite[Lemma 5.15]{CLW3}, we obtain the following inclusion:
\begin{corollary}\label{goodinclu}
For any $R > 0$ large, there exists a constant $M_R > 0$ independent of $h$ such that 
\begin{equation}\label{ginl}
	B_R(0) \cap \Omega_{h1} \subset Dv_h(B_{M_R}(0) \cap \Omega^*_{h1}) \quad\text{for $h>0$ small.}
\end{equation}
\end{corollary}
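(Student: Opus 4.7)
My plan is to combine the strict convexity estimate from Lemma \ref{vesttoday} with a rescaling argument in the spirit of \cite[Lemma 5.10]{CLW3}.

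Given any $x \in B_R(0) \cap \Omega_{h1}$, the Brenier map associated to $v_h$ supplies a point $y \in \overline{\Omega^*_{h1}}$ with $x \in \partial^{-}v_h(y)$, and my goal is to show $|y| \le M_R$ for some $M_R$ depending only on $R$ and universal constants, not on $h$. The key mechanism is the subdifferential inequality combined with $v_h(0)=0$ and $0 \in \partial^{-}v_h(0)$: evaluating the defining inequality at $0$ gives
\[
0 = v_h(0) \;\ge\; v_h(y) + x\cdot(0-y),\qquad\text{so}\qquad v_h(y) \;\le\; x\cdot y \;\le\; R|y|.
\]
Lemma \ref{vesttoday} supplies the matching lower bound $v_h(y)\ge C^{-1}|y|^{1+a}$ on $B_{r_0}(0)\cap\Omega^*_{h1}$, and combining the two immediately yields $|y|\le(CR)^{1/a}$ whenever $y$ lies in $B_{r_0}$. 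This handles the regime of small $R$.

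The main issue is when $R$ is large, so $y$ could fall outside $B_{r_0}$; this is where rescaling enters. For each height $\tau\in[1,\tau_0]$ (with $\tau_0$ to be chosen large) the centred section $S^c_\tau[v_h]$ has good shape by Corollary \ref{co21}, and can be affinely normalised to $B_1$ via a map $T_\tau$. The rescaled function $\tilde v_\tau(z) := \tau^{-1}v_h(T_\tau^{-1}z)$ inherits from $v_h$ the Monge-Amp\`ere inequality \eqref{vhma1}, the uniform density property \eqref{udvh}, and the cone inclusion \eqref{ccin1}, all with universal constants independent of $h$ and $\tau$. Applying Lemma \ref{vesttoday} to $\tilde v_\tau$ then yields
\[
\tilde v_\tau(z)\ \ge\ C^{-1}|z|^{1+a}\qquad\text{on }B_{r_0}\cap T_\tau\Omega^*_{h1},
\]
with $C$, $a$, $r_0$ independent of both $h$ and $\tau$. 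Pulling these estimates back through $T_\tau$ and patching them across $\tau\in[1,\tau_0]$ produces a polynomial lower bound of the form $v_h(y)\ge c_R|y|^{1+a'}$ valid on $B_{M_R}(0)\cap\Omega^*_{h1}$, with constants depending only on $R$ and universal parameters. Inserting this into $v_h(y)\le R|y|$ then forces $|y|\le M_R$, which proves the claimed inclusion.

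The main obstacle I foresee is the patching step: the normalising maps $T_\tau$ are generally anisotropic, so converting the uniform $B_{r_0}$-scale strict convexity of each $\tilde v_\tau$ into a clean lower bound of $v_h$ in terms of the Euclidean norm $|y|$ requires combining the volume estimates $|S^c_\tau[v_h]|\approx\tau^{n/2}$ from Corollary \ref{co21} with the cone constraint \eqref{ccin1} to control how the axes of $T_\tau$ vary with $\tau$. This is exactly the technical content of \cite[Lemma 5.10]{CLW3}, and the same reasoning should transfer here once one verifies that each ingredient used there (doubling, geometric decay, uniform density, and the cone lower barrier) has been established in our rescaled setting.
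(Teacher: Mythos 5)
Your proposal follows essentially the same route the paper indicates: the strict convexity estimate \eqref{alpha2h} from Lemma \ref{vesttoday} gives the small-scale inclusion, and a rescaling/patching argument in the style of \cite[Lemma~5.10]{CLW3} extends it to an arbitrary $R$. The reduction to bounding $|y|$ with $x\in\partial^{-}v_h(y)$, the subdifferential inequality $v_h(y)\le x\cdot y\le R|y|$ (using $v_h(0)=0$), and the affine invariance of the Monge--Amp\`ere and uniform density estimates that makes the normalized $\tilde v_\tau$ satisfy Lemma \ref{vesttoday} uniformly, are all the right ingredients, and you correctly flag the anisotropy of the normalizing maps as the delicate point.

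One place the logic as written does not quite close: the lower bound $v_h(y)\ge c_R|y|^{1+a'}$ is produced only on $B_{M_R}\cap\Omega^*_{h1}$, so inserting it into $v_h(y)\le R|y|$ gives $|y|\le (R/c_R)^{1/a'}$ \emph{provided} $y$ already lies in $B_{M_R}$; it does not by itself rule out $|y|>M_R$. To finish you should add the standard convexity bootstrap: since $\Omega^*_{h1}$ is convex with $0\in\partial\Omega^*_{h1}$, the segment from $0$ to $y$ stays in $\overline{\Omega^*_{h1}}$, so for $|y|>M_R$ convexity of $v_h$ along that ray gives $v_h(y)\ge\frac{|y|}{M_R}\,v_h\bigl(M_R\tfrac{y}{|y|}\bigr)\ge c_R M_R^{a'}|y|$, which contradicts $v_h(y)\le R|y|$ once $M_R$ is chosen large enough with $c_RM_R^{a'}>R$. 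The same caveat applies to your ``small $R$'' observation at the start, where the conclusion $|y|\le(CR)^{1/a}$ is stated under the hypothesis $y\in B_{r_0}$ without first justifying that hypothesis. These are routine to repair and do not affect the overall correctness of the approach.
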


Passing to a subsequence \( \{h_j\} \to 0 \), we may assume that \( \{v_{h_j}\} \) converges to \( v_{\infty} \) locally uniformly for some convex function \( v_{\infty} \) defined on \( \mathbb{R}^n \). 
Additionally, up to a further subsequence, we may assume that \( T\Omega_1^* \) converges to some convex set \( \Omega_{1,\infty}^* \subset \{x_1\geq 0\} \) locally uniformly in the Hausdorff distance. 
Let us denote by \( \Omega_{1,\infty} \) the interior of \( \partial^- v_{\infty}(\mathbb{R}^n) \). 
Since \( v_{\infty} \) is a convex function defined on the entire \( \mathbb{R}^n \), it is well-known that \( \Omega_{1,\infty} \) is a convex set. 
By \eqref{vhma1} and the continuity of \( f \) and \( g \) at \( 0 \), and passing to a subsequence, we have that
\begin{equation}\label{vhma2}
\det\, D^2v_{\infty} = c_0 \chi_{_{\Omega_{1,\infty}^*}} \quad \text{on } \ \mathbb{R}^n,
\end{equation}
where $c_0$ is a positive constant.
Define \( u_{\infty}(x) := \sup \{ x \cdot y - v_{\infty}(y) : y \in \mathbb{R}^n \} \) for \( x \in \overline{\Omega_{1,\infty}} \).

Since $\Omega_{1,\infty}^*$ is convex, \eqref{vhma2} implies that the Monge-Amp\`ere measure $\det\, D^2v_\infty$ is doubling for convex sets centred at points in $\overline{\Omega_{1,\infty}^*}$. Consequently, by applying the same proof as in \cite[Lemma 5.17]{CLW3}, we establish the following properties for $u_\infty$ and $v_\infty$:
\begin{lemma}\label{uvreg1}
The function $v_\infty$ is $C^1$ and strictly convex in $\overline{\Omega_{1,\infty}^*}$. Moreover, as a convex function defined on $\mathbb{R}^n$, $v_\infty$ is differentiable at every point $y \in \overline{\Omega_{1,\infty}^*}$. The function $u_\infty$ is $C^1$ and strictly convex in $B_{r_0}(0)\cap \overline{\Omega_{1,\infty}}$ for some small $r_0>0$.
\end{lemma}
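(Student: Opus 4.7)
The plan is to adapt the argument of \cite[Lemma 5.12]{CLW3}, which combines Caffarelli's strict convexity theory with the doubling property of the Monge-Amp\`ere measure that arises once the source set is convex. The three main ingredients are already in place: the limiting equation \eqref{vhma2} has a constant right-hand side on the \emph{convex} set $\Omega_{1,\infty}^*$; the two-sided modulus estimate \eqref{alpha2h} survives the passage to the limit; and Corollary \ref{goodinclu} guarantees that $\partial^{-}v_\infty(\mathbb{R}^n)$ contains a full neighbourhood of $0$, so that $u_\infty$ is defined in some $B_{r_0}(0)$.

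First I would upgrade \eqref{vhma2} to a doubling statement. Since $\Omega_{1,\infty}^*$ inherits the uniform cone condition \eqref{ccin1} at $0$ and is convex, the volume ratio $|U\cap\Omega_{1,\infty}^*|/|U|$ is bounded below uniformly for every convex set $U$ centred at a point of $\overline{\Omega_{1,\infty}^*}$ near $0$. Together with \eqref{vhma2}, this shows that the Monge-Amp\`ere measure of $v_\infty$ is doubling on centred sections. Combined with the lower bound $v_\infty(y)\geq \frac{1}{C}|y|^{1+a}$ obtained by taking the limit in \eqref{alpha2h}, the standard Caffarelli-type arguments \cite{C92} then give strict convexity and $C^1$ regularity of $v_\infty$ on $\overline{\Omega_{1,\infty}^*}\cap B_{r_0}(0)$. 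To handle a boundary point $y\in\partial\Omega_{1,\infty}^*$, I would rule out a nontrivial affine piece of $v_\infty$ through $y$ by contradiction, following the scheme used in Lemma \ref{sc111}--Lemma \ref{c1lemma}: if such a piece existed, a sub-level set argument based on the localisation estimates \eqref{la1}--\eqref{ua1}, together with the uniform density of $\Omega_{1,\infty}^*$ in centred sections, would force a contradiction. Differentiability of $v_\infty$ at $y$ then follows because the sub-differential, being the convex hull of extremal gradients, collapses to a single point once strict convexity is known.

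For the statements about $u_\infty$, I would argue by Legendre duality. Since $u_\infty$ is the Legendre transform of $v_\infty$, strict convexity of $v_\infty$ on $\overline{\Omega_{1,\infty}^*}$ transfers to $C^1$ regularity of $u_\infty$ on $Dv_\infty(\overline{\Omega_{1,\infty}^*})$, while $C^1$ regularity of $v_\infty$ transfers to strict convexity of $u_\infty$ on the same set. Corollary \ref{goodinclu} ensures that this set contains $B_{r_0}(0)\cap\overline{\Omega_{1,\infty}}$ for some small $r_0>0$, giving the desired conclusions.

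The main obstacle will be carrying the doubling and strict convexity arguments all the way up to the boundary point $0\in\partial\Omega_{1,\infty}^*$, where the boundary of $\Omega_{1,\infty}^*$ need not be smooth and no interior ball is available. This is precisely where the uniform cone condition \eqref{ccin1}, combined with the geometric decay \eqref{geodc} (which is scale-invariant and thus persists in the limit), replaces the usual interior ball assumption in the Caffarelli machinery and allows the argument to be closed.
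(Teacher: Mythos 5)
Your proposal follows the paper's approach: both derive the doubling property of $\det D^2 v_\infty$ from the convexity of $\Omega_{1,\infty}^*$ together with the limiting equation \eqref{vhma2}, and then invoke the scheme of \cite[Lemma 5.12]{CLW3}, completing the assertions for $u_\infty$ by Legendre duality. One caveat worth fixing: the claim that $|U\cap\Omega_{1,\infty}^*|/|U|$ is uniformly bounded below over \emph{all} convex sets $U$ centred at a point of $\overline{\Omega_{1,\infty}^*}$ is too strong (a slab that is thin in the direction transverse to the cone \eqref{ccin1} makes this ratio arbitrarily small); what is needed, and what the paper actually uses, is only the doubling estimate $|U\cap\Omega_{1,\infty}^*|\leq 2^n\,|(\frac{1}{2}U)_y\cap\Omega_{1,\infty}^*|$, which follows for any convex $U$ centred at $y\in\overline{\Omega_{1,\infty}^*}$ by simply applying the midpoint contraction toward $y$ and using the convexity of $\Omega_{1,\infty}^*$.
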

\begin{remark}\label{uinfty1}
\emph{
By the definition of $\Omega_{1,\infty}$, one has 
	$$Du_\infty(B_{r_0}(0) \cap \overline{\Omega_{1,\infty}}) \subset\overline{\Omega_{1,\infty}^*} \subset \{x_1 \geq 0\},$$ 
which implies that $u_\infty$ is nondecreasing in the $e_1$-direction near $0.$ }

\emph{ Since \( v_\infty \) is differentiable at \( 0 \), one has
\[
\partial^- v_\infty(B_r(0)) \subset B_{r_0}(0) \cap \overline{\Omega_{1,\infty}}
\]
provided \( r \) is sufficiently small. 
We \emph{claim} that: \( v_\infty \), as a convex function defined on \( \mathbb{R}^n \), is \( C^1 \) in \( B_{r}(0) \) for some \( r > 0 \) sufficiently small. }

\emph{ Indeed, suppose \( v_\infty \) is not differentiable at some \( y \in B_r(0) \), which would imply that \( \partial^- v_\infty(y) \) contains at least two distinct points $x, \bar{x} \in B_{r_0}(0) \cap \overline{\Omega_{1,\infty}}$. 
Since $\Omega_{1,\infty}$ is convex, and $u_\infty$ is the Legendre transform of \( v_\infty \), it follows that \( u_\infty \) is affine along the segment connecting \( x \) and \( \bar{x} \), which contradicts the strict convexity of \( u_\infty \) in \( B_{r_0}(0) \cap \overline{\Omega_{1,\infty}} \).  }
\end{remark}

\begin{lemma}\label{gdspl1}
After an appropriate affine transformation, we obtain (see Figure \ref{f4.1}):
\begin{enumerate}
\item $\Omega_{1,\infty}^* = \{x \in \mathbb{R}^n: x_1 > P(x)\}$, where $P$ is a non-negative homogeneous quadratic polynomial satisfying $P(0) = 0$ and $DP(0) = 0$.
\item $\Omega_{1,\infty} = \{x \in \mathbb{R}^n: x_n > \rho_\infty(x_1)\}$ for some convex function $\rho_{\infty}$ when $|x_1|$ is small. Moreover, $\rho_\infty(0) = 0$ and $\rho_\infty \geq 0$.
\end{enumerate}
\end{lemma}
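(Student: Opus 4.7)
The plan is to analyze the two limit sets $\Omega_{1,\infty}^*$ and $\Omega_{1,\infty}$ separately, exploiting the $C^2$ uniform convexity of $\partial\Omega_1^*$ for (1) and the decay estimate $|\rho(x')|\leq C_\epsilon|x'|^{3/2-\epsilon}$ of Lemma~\ref{decayrate2} for (2).

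For (1), I would Taylor expand $\rho^*$ at $0$:
\[
\rho^*(\bar y) = \tfrac12 \bar y^T B \bar y + o(|\bar y|^2),\qquad B = D^2\rho^*(0)>0,
\]
and substitute the inverse of $T = T_2\circ T_1$ (i.e.\ $y_1 = a_1 z_1$, $\bar y_i = a_i \bar z_i + k_i a_1 z_1$) into the boundary relation $y_1 = \rho^*(\bar y)$. After dividing by $a_1$, the leading contribution is $\frac{1}{2a_1}\sum_{i,j\geq 2} B_{ij} a_i a_j \bar z_i \bar z_j$, together with mixed terms of size $k_i a_j z_1 \bar z_j$ and $a_1 k_i k_j z_1^2$, plus a Taylor remainder scaling as $a_i a_j a_k /a_1 \to 0$ (since each $a_i\to 0$). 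Passing to a subsequence $h_j\to 0$ along which the bounded ratios $a_i a_j/a_1$, $k_i a_j$, and $a_1$ converge (these are bounded thanks to \eqref{ai000}), the limit boundary is $z_1 = P(\bar z)$ for some quadratic polynomial $P$. Non-negativity of $P$ follows from convexity of $T(\Omega_1^*)$ together with $0\in \partial T(\Omega_1^*)$ with inner normal converging to $e_1$; $P(0)=0$ and $DP(0)=0$ are inherited from $\rho^*(0)=0$ and $D\rho^*(0)=0$. A subsequent orthogonal change of variables within $\mathrm{span}\{\bar e_2,\ldots,\bar e_n\}$ puts $P$ into the canonical form, which is the ``appropriate affine transformation'' in the statement.

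For (2), $\rho_\infty(0)=0$ is immediate, since $0\in\mathcal{F}$ and $0$ lies on $\partial\Omega_{1,\infty}$ via the blow-up of $Du_1(0)=y_0$ combined with Remark~\ref{uinfty1}. The bound $\rho_\infty\geq 0$ follows by passing Lemma~\ref{saddle} through the rescaling $T^*$ and using Hausdorff convergence. The delicate claim is the cylindrical structure of $\Omega_{1,\infty}$ in $x_2,\ldots,x_{n-1}$. Here I would combine Corollary~\ref{goodinclu} (which ensures $\Omega_{1,\infty}$ is ``large'' in rescaled coordinates) with an anisotropic scaling estimate on the rescaled singular set $T^*(\mathcal{F})$: writing $\tilde x = T^* x$ and using $|\rho(x')|\leq C_\epsilon|x'|^{3/2-\epsilon}$, a direct computation using the amplification rates $a_i/h$ shows that for $\tilde x$ in a bounded region the $\bar e_n$-coordinate of $T^*x$ on $\mathcal{F}$ is bounded by $o(1)\cdot\sum|\tilde x_j|^{3/2-\epsilon}$ in the $\bar e_2,\ldots,\bar e_{n-1}$ coordinates, so that $T^*(\mathcal{F})$ flattens in these directions in the limit. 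Convexity of $\Omega_{1,\infty}$ (as the interior of $\partial^-v_\infty(\mathbb{R}^n)$) then forces the epigraph representation $\{x_n > \rho_\infty(x_1)\}$, and convexity of $\rho_\infty$ follows from convexity of $\Omega_{1,\infty}$.

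The hardest part will be the cylindrical structure in (2): making the anisotropic flattening estimate rigorous (in particular identifying which $n-2$ directions genuinely become cylindrical under the blow-up, after the rotation induced by the principal axes of $E$), and coordinating subsequences so that the limits in (1) and (2) hold simultaneously for a single $h_j\to 0$.
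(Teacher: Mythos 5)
Your plan for part (1) is broadly the right idea (Taylor-expand $\rho^*$, push forward under $T$, extract a quadratic limit), and your use of $|\rho(x')|\le C_\epsilon|x'|^{3/2-\epsilon}$ to flatten $T^*(\mathcal{F})$ in the $e_2,\dots,e_{n-1}$ directions is exactly the mechanism the paper uses. But there are a few real gaps, one of which you flagged yourself as unresolved.

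First, for (1), the boundedness of the quadratic coefficients of $\tilde P_h$ does \emph{not} follow from \eqref{ai000} alone. There is no lower bound on $a_1$ in \eqref{ai000}, so ratios like $a_i a_j/a_1$ are not controlled by those inequalities. The paper derives coefficient boundedness from the uniform density property (Lemma~\ref{ud1}), following the argument of \cite[Lemma~5.9]{CLW3}: if the coefficients blew up, $T\Omega_1^*$ would degenerate to a slab and violate uniform density of $v$ near $0$. Your route to (1) needs this extra input.

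Second, and more seriously, for (2) you never rule out the degeneracy $e_1\in H$ (the limiting $(n-2)$-dimensional cylinder direction). Without this, $\Omega_{1,\infty}$ could be cylindrical along $e_1$ and the representation $\{x_n>\rho_\infty(x_1)\}$ would be vacuous or impossible. The paper shows $e_1\notin H$ (its \eqref{gp11}) via a short duality argument: if $e_1\in H\subset\overline{\Omega_{1,\infty}}$, then $u_\infty(-te_1)\le u_\infty(0)=0$ by the monotonicity from Remark~\ref{uinfty1} while $u_\infty\ge 0$ on $H$, forcing $u_\infty\equiv 0$ on a segment, contradicting the strict convexity of $u_\infty$ from Lemma~\ref{uvreg1}. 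This step is essential, not cosmetic.

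Third, your ``subsequent orthogonal change of variables'' and ``rotation induced by the principal axes of $E$'' will not in general reconcile the cylinder direction $H$ of $\Omega_{1,\infty}$ with the one needed for $\Omega_{1,\infty}^*$, because the duality couples $\Omega_{1,\infty}$ via $A$ and $\Omega_{1,\infty}^*$ via $(A^t)^{-1}$, and there is no reason $H$ is orthogonal to (or aligned with) $\mathrm{span}\{e_2,\dots,e_{n-1}\}$ a priori. The paper handles this with a genuinely affine (non-orthogonal) construction: a sequence $A=A_{n-2}\cdots A_1$ engineered so that $AH=(A^t)^{-1}H^*$, followed by a shear $\tilde A$ ($x_1\mapsto x_1+kx_n$) which puts $\omega$ into the graph form $\{x_n\ge\rho_\infty(x_1)\}$ while preserving the quadratic-graph form of $\Omega_{1,\infty}^*$. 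This simultaneous normalisation is exactly the coordination problem you identified as ``the hardest part,'' and your proposal currently has no mechanism to resolve it.
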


\begin{proof}
Fix any unit vector $e \in \text{span}\{e_2, \ldots, e_{n-1}\}$. Let $t_h$ be the positive number such that
\[
\left|\frac{1}{h}(T_2^t)^{-1}(t_he)\right| = 1,
\]
yielding
\[
t_h \leq \frac{h}{\min_{2 \leq i \leq n} a_i} \leq C_\epsilon h^{\frac{1}{2} - \epsilon}.
\]
For any $t$ satisfying $|t| < h^{-\epsilon}t_h\leq C_\epsilon h^{\frac{1}{2} - 2\epsilon}$, define $x^t := te = (0, x^t_2, \ldots, x^t_{n-1}, 0)$. It is straightforward to verify that 
\begin{equation}\label{bp112}
\left|T^*(h^{-\epsilon}t_he)\right|\geq \left|\frac{1}{h}(T_2^t)^{-1}(h^{-\epsilon}t_he)\right| \rightarrow \infty\quad \text{as}\ h \rightarrow 0.
\end{equation}

Now, let $q^t := x^t + \rho(0, x^t_2, \ldots, x^t_{n-1})e_n \in \mathcal{F}$. Let $z_i := e_n \cdot \bar{e}_i$, then $e_n = \sum_{i=2}^n z_i \bar{e}_i$. Consequently, we have
\[
T^*(\rho(0, x^t_2, \ldots, x^t_{n-1})e_n) = \frac{1}{h}\rho(0, x^t_2, \ldots, x^t_{n-1})\left(a_1\left(\sum_{i=2}^n k_iz_i\right)e_1 + \sum_{i=2}^n a_iz_i \bar{e}_i\right).
\]
Since $|t| \leq C_\epsilon h^{\frac{1}{2} - 2\epsilon},$ by Lemma \ref{decayrate2} we have 
$\left|\rho(0, x^t_2, \ldots, x^t_{n-1})\right| \leq C_\epsilon h^{\frac{3}{4}-\frac{7}{2}\epsilon}.$
Given the above estimate for $\rho,$ $|t| \leq C_\epsilon h^{\frac{1}{2} - 2\epsilon}$, $|z_i| \leq 1$, and $|a_i| \leq C_\epsilon h^{\frac{1}{3} - \epsilon}$ for $i = 2, \ldots, n$, we deduce that
\[
\left|\frac{1}{h}\rho(0, x^t_2, \ldots, x^t_{n-1})\sum_{i=2}^n a_iz_i \bar{e}_i\right| \leq C_\epsilon h^{\frac{1}{12} - 10\epsilon}
\]
provided $h$ is sufficiently small.

Let \(\tilde{d}_e:=\sup\{|x \cdot e| : x \in S_h^c[v]\}\).
By applying the uniform density property, we have a comparison between the widths of \(S_h^c[v]\) and \(S_h^c[v]\cap\Omega_1^*\) in the \(e\) direction, asserting their comparability. Furthermore, Remark \ref{reeq1} indicates that the width of \(S_h[v]\cap \Omega_1^*\) in the \(e\) direction is also comparable to that of \(S_h^c[v]\cap\Omega_1^*\) in the same direction.
Synthesizing these comparative relationships with the estimates from Lemma \ref{ide1}, we are led to:
\begin{equation}\label{wid111}
\tilde{d}_e^2 \leq C a_1 \quad \forall \text{ unit vector } e \in \text{span}\{e_2, \ldots, e_n\}
\end{equation}
for a universal constant $C.$
Consequently, we obtain
\begin{equation}\label{estwid21}
|k_i| \leq C\frac{a_1^{1/2}}{a_1} = C a_1^{-1/2}.
\end{equation}
Note that \(|a_1| \leq C_\epsilon h^{2/3 - \epsilon}\). Therefore, we have
\begin{align*}
\left|\frac{1}{h} \rho(0, x_2^t, \ldots, x_{n-1}^t) a_1 \left(\sum_{i=2}^n k_i z_i\right) e_1\right| &\leq C_\epsilon \frac{1}{h} \left(C_\epsilon h^{1/2 - 2\epsilon}\right)^{3/2 - \epsilon} \left(C_\epsilon h^{2/3 - \epsilon}\right)^{1/2}\\
&\leq C_\epsilon h^{1/12 - 10\epsilon}.
\end{align*}
Consequently, 
\begin{equation}\label{flat1}
|T^*(q^t) - T^*(x^t)| \leq 2C_\epsilon h^{1/12 - 10\epsilon} \rightarrow 0 \quad \text{as} \quad h \rightarrow 0,
\end{equation}
provided that \(\epsilon\) is initially chosen to be sufficiently small.

By taking a subsequence, if necessary, we may assume that \(T^*(\text{span}\{e_2, \ldots, e_{n-1}\})\) converges to \(H\), an \((n-2)\)-dimensional subspace of \(\mathbb{R}^n\). Consequently, from \eqref{bp112}, \eqref{flat1}, and Corollary \ref{goodinclu}, it follows that \(H \subset \partial \Omega_{1,\infty}\). 
By convexity, we have that 
\begin{equation}\label{o1split}
\Omega_{1,\infty}=\omega\times H
\end{equation}
 for some two dimensional convex set $\omega.$

Now, we \emph{claim} that
\begin{equation}\label{gp11}
e_1 \notin H.
\end{equation}
Suppose to the contrary that the $x_1$-axis is contained in $H \subset \overline{\Omega_{1,\infty}}$. From the definition of $u_\infty$, it follows that $u_\infty(0) = 0$ and $u_\infty \geq 0$ on $H$. On one hand, by Remark \ref{uinfty1}, we have $u_\infty(-te_1) \leq u_\infty(0)=0$ for all small $t > 0$, and it follows that
\begin{equation}\label{crf1}
u_\infty(-te_1) = 0 \quad \text{for all sufficiently small } t > 0.
\end{equation}
On the other hand, Lemma \ref{uvreg1} ensures that $u_\infty$ is strictly convex in $B_r(0) \cap \overline{\Omega_{1,\infty}}$. This, however, contradicts \eqref{crf1}.

 Note that \(\frac{Te_n}{|Te_n|}\) is the unit inner normal to \(T^*\Omega_1\) at the origin. Without loss of generality, after passing to a subsequence of \(h\), we may assume that \(\frac{Te_n}{|Te_n|}\) converges to a unit vector \(e_\infty\), orthogonal to both \(H\) and \(e_1\). By Lemma \ref{saddle}, the line \(\{te_1 : t \in \mathbb{R}\}\) cannot intersect the interior of \(\Omega_{1, \infty}\). Thus,
\begin{equation}\label{gp22}
\Omega_{1, \infty} \subset \{x : x \cdot e_\infty \geq 0\}.
\end{equation}

Recall that the boundary of $\Omega^*_1$ is given by \(\partial \Omega^*_1 = \{x : x_1 = \rho^*(x_2, \ldots, x_n)\}\), where \(\rho^*\) is a \(C^2\), uniformly convex function in a neighbourhood of the origin, satisfying \(\rho^*(0) = 0\) and \(D\rho^*(0) = 0\). Denote \(x'' = (x_2, \ldots, x_n)\). In this setting, there exists a positive, homogeneous quadratic polynomial \(P'\) such that
\[\rho^*(x'') = P'(x'') + o(|P'(x'')|).\]
We can extend \(P'\) to \(\R^n\) by defining \(\tilde{P}(x_1, x'') = P'(x'')\). This leads to the following representation:
\[\partial \Omega^*_1 = \left\{x : \langle x, e_1\rangle= \tilde{P}(x) + \eta(x)\tilde{P}(x)\right\}\ \text{near the origin},\]
where $\eta$ is a function satisfying $\eta(x)\rightarrow 0$ as $x\rightarrow 0.$

A straightforward computation yields
\begin{equation}\label{h10}
\partial (T\Omega_1^*) = \left\{x : \langle x, e_1\rangle = \tilde{P}_h(x) + \eta(T^{-1}x)\tilde{P}_h\right\} \quad \text{near } 0,
\end{equation}
where \(\tilde{P}_h(x) = \frac{1}{|(T^t)^{-1}e_1|} \tilde{P}(T^{-1}x) \geq 0\), and
\begin{equation}\label{colla}
B_1(0) \cap T\Omega_1^* \subset \left\{x : \langle x, e_1\rangle \geq \frac{1}{2}\tilde{P}_h(x)\right\} \quad \text{for \(h > 0\) small}.
\end{equation}
By applying the uniform density property of \( v \) near the origin, and following a similar line of argument to the proof of \cite[Lemma 5.14]{CLW3}, we can deduce that the coefficients of \( \tilde{P}_h \) are bounded by a constant that is independent of \( h \).
 Consequently, upon passing to a subsequence if necessary, we may assume that \(\tilde{P}_h\) converges to a non-negative homogeneous quadratic polynomial \(P\), leading to
\begin{equation}\label{gp111}
\Omega_{1,\infty}^* = \{x \in \mathbb{R}^n : x_1 > P(x)\}.
\end{equation}

Since \( e_1 \cdot e_\infty = 0 \), by a rotation of coordinates we may assume \( e_\infty = e_n \).
By \eqref{gp11}, for a fixed unit vector \( e \in H \), we can find a vector \( \tilde{e} \in H^* := \text{span}\{e_2, \cdots, e_{n-2}\} \) such that \( e \) is not orthogonal to \( \tilde{e} \).
Hence, there exists an affine transformation \( A_1 \) with \( \det\, A_1 = 1 \) such that \( A_1(\text{span}\{e_n\}) = \text{span}\{e_n\} \), \( A_1(\text{span}\{e_1, \ldots, e_{n-1}\}) = \text{span}\{e_1, \ldots, e_{n-1}\} \),
and \( A_1e \) is parallel to \( (A_1^t)^{-1}\tilde{e} \) (see \cite[(3.2)]{CLW3}).
 The unit inner normals of \( A_1(\Omega_{1,\infty}) \) and \( (A_1^t)^{-1}\Omega^*_{1,\infty} \) at \( 0 \) are still orthogonal to each other. Denote \( \bar{e}_2 = \frac{A_1e}{|A_1e|} \). Then, \( A_1(\Omega_{1,\infty}) = \omega_1 \times H_1 \times \text{span}\{\bar{e}_2\} \), where \( \omega_1 \) is a two-dimensional convex subset and \( H_1 \) is an \( (n-3) \)-dimensional subspace in \( \mathbb{R}^n \).  It is easy to see that \( (A_1^t)^{-1}H^* = H^*_1 \times \text{span}\{\bar{e}_2\} \) for some \( (n-3) \)-dimensional subspace \( H^*_1 \)  in \( \mathbb{R}^n \).

Then we restrict ourselves to \( H_1 \) and \( H^*_1 \) in the \((n-2)\)-space \( \text{span}\{\bar{e}_2, e_n\}^\bot \).
Similarly, as above, we can find unit vectors \( e' \in H_1, \tilde{e}' \in H^*_1 \) and an affine transformation \( A_2 \) such that
\( A_2(\text{span}\{\bar{e}_2, e_n\}) = \text{span}\{\bar{e}_2, e_n\} \), \( A_2( \text{span}\{\bar{e}_2, e_n\}^\bot ) = \text{span}\{\bar{e}_2, e_n\}^\bot \),
and \( A_2e' \) is parallel to \( (A_2^t)^{-1}\tilde{e}' \).
Let \( \bar{e}_3 = \frac{A_2e'}{\|A_2e'\|} \). Repeating this process, after a sequence of affine transformations \( A_i \), \( i=1, \ldots, n-2 \), we have \( AH = (A^t)^{-1}H^* \), where \( A = A_{n-2} \cdots A_1 \).

Hence, by performing the affine transformation \( A \) as above, and then by a rotation of coordinates, we may assume that 
\( \Omega_{1, \infty}^* \) is as in the statement of Lemma \ref{gdspl1} and that
\( \Omega_{1,\infty} = \omega \times \text{span}\{e_2, \ldots, e_{n-2}\} \) for some two-dimensional convex set \( \omega \subset \text{span}\{e_1, e_n\} \) satisfying \( 0 \in \partial \omega \) and \( \omega \subset \{x_n \geq 0\} \).

Finally, after an affine transformation of the form
\begin{equation*}
\tilde{A}: \left\{ \begin{array}{rll}
x_1 &\to x_1 + k x_n & \text{for a constant \( k \in \mathbb{R} \)} \\
x_i &\to x_i & \text{for } i = 2, \ldots, n.
\end{array}
\right.
\end{equation*}
we can transform \(\Omega_{1,\infty}\) to the position as in the statement of the lemma.
Note that \(\tilde{A}\) makes \(\Omega_{1,\infty}\) slide along the \(x_1\)-direction, and at the same time \((\tilde{A}^t)^{-1}\) makes \(\Omega^*_{1,\infty}\) slide along the \(y_n\)-direction, while the \((n-2)\)-space \(\text{span}\{e_2, \ldots, e_{n-2}\}\) remains invariant.
Hence, by choosing a proper constant \( k \in \mathbb{R} \), we may assume that \(\omega = \{ (x_1, x_n) : x_n \geq \rho_\infty(x_1) \}\) for a convex function \(\rho_\infty\). Note that since \( P \) is a non-negative homogeneous quadratic polynomial, after the corresponding affine transformation \((\tilde{A}^t)^{-1}\), the set \(\Omega^*_{1,\infty}\) still satisfies \(\Omega_{1,\infty}^* = \{ x \in \mathbb{R}^n : x_1 > P(x) \}\), but with a different non-negative homogeneous quadratic polynomial \( P \).
\end{proof}

In the sequel, for notational brevity, we shall denote \( v_\infty \)  by \( \tilde{v} \). Define the sets 
	\[ V := B_1(0)\cap\Omega^*_{1,\infty}, \quad\text{ and }\  U := D\tilde{v} (V) \subset \Omega_{1,\infty}. \]
Note that $V$ is a bounded convex set.
By the strict convexity of $\tilde v$, there exists a small \( r_1 \in (0, r_0) \) such that \( B_{r_1}(0) \cap \Omega_{1,\infty} \subset U \). It follows that $\partial U$ is convex near $0.$

By Lemma~\ref{gdspl1}, in a neighbourhood of \(0\), we have 
\begin{equation}\label{partialU}
U = \left\{x \in \mathbb{R}^n : x_n > \rho_\infty(x_1)\right\},
\end{equation}
where \(\rho_\infty\) is a convex function satisfying \(\rho_\infty(0) = 0\) and \(\rho_\infty \geq 0\), and
\begin{equation}\label{partialV}
V = \left\{y \in \mathbb{R}^n : y_1 > \bar{\rho}(y_2, \ldots, y_n)\right\},
\end{equation}
where \(\bar{\rho}\) is a smooth convex function satisfying \(\bar{\rho}(0) = 0\), \(D\bar{\rho}(0) = 0\).

Let \(\tilde u:\mathbb{R}^n \to \mathbb{R}\) be defined by
\begin{equation}\label{tildeu}
\tilde u(x) := \sup \{x \cdot y - \tilde v(y) : y \in V\} \quad \forall\, x \in \mathbb{R}^n.
\end{equation}
By the definition of \(u_\infty\) and Lemma \ref{uvreg1}, we have \(\tilde u = u_\infty\) on \(B_{r_1}(0) \cap \overline{\Omega_{1,\infty}} \subset \overline{U}\).

Note also that
\begin{equation}\label{mau}
\begin{cases}
\det D^2\, \tilde{u} = c_1\chi_{U} \quad \text{in } \mathbb{R}^n, \\
D\tilde{u}(\mathbb{R}^n) = \overline{V}
\end{cases}
\end{equation}
for some positive constant \( c_1 \).
Denote \(r_2 = \frac{1}{2}r_1\).
Since \(\tilde u\) is strictly convex in \(B_{r_1}(0) \cap \overline{U} = B_{r_1}(0) \cap \overline{\Omega_{1,\infty}}\), there exists a constant \(h_0 > 0\) such that
\begin{equation}\label{loctidu1}
S_h^c[\tilde u](x) \cap U \subset B_{r_1}(0) \cap U \quad \forall\, x \in B_{r_2}(0) \cap \overline{U} \text{ and } \forall\, h \leq h_0.
\end{equation}
It follows from  \eqref{mau} and \eqref{loctidu1} that the Monge-Amp\`ere measure $\det\, D^2 \tilde{u}$ is doubling for 
centred sections $S_h^c[\tilde u](x)$, where $h\leq h_0$ and $ x\in B_{r_2}(0) \cap \overline{U}.$

We proceed to summarise the properties of \( \tilde{u} \) and \( \tilde{v} \):
\begin{enumerate}
    \item \textbf{Geometric decay.} By the obtained doubling property of the Monge-Amp\`ere measures $\det\, D^2\tilde u$ and $\det\, D^2\tilde v$, from \cite[Lemma 2.2]{C96} the centred sections \( S_h^c[\tilde{u}] \) and \( S_h^c[\tilde{v}] \) decay geometrically. Consequently, \( \tilde{u} \) (resp., \( \tilde{v} \)) is \( C^{1,\beta} \)-regular for some $\beta\in(0,1)$ and strictly convex in \( B_{r}(0) \cap \overline{U} \) (resp., \( B_{r}(0) \cap \overline{V} \)) for some positive \( r > 0 \).  
    
Applying \cite[Corollary 2.2]{C96}, we have the duality between $S_h^c[\tilde{v}]$ and $D\tilde{v}(S_h^c[\tilde{v}])$: if 
	\[ E\subset  S_h^c[\tilde v] = \{\tilde{v} < \ell\} \subset C_nE\quad\text{ for some affine function $\ell$}, \]
where $E$ is an ellipsoid centred at $0$ with principal radii $\lambda_i\tilde e_i$, $i=1,\cdots, n$, then
	\[ E^*\subset D\tilde{v}(S_h^c[\tilde v])\subset C_n E^*, \]
where $E^*$ is an ellipsoid centred at $D\ell$ with principal radii $\frac{h}{\lambda_i}\tilde e_i$, $i=1,\ldots, n$.
A similar duality holds for $S_h^c[\tilde{u}]$ and $D\tilde{u}(S_h^c[\tilde{u}])$ as well.
 
By the proof of \cite[Lemma 2.2]{CLW1}, we have
\begin{align}
S^c_{b^{-1}h}[\tilde v] \cap V \subset S_h[\tilde v] \cap V \subset S^c_{bh}[\tilde v], \label{equi02} \\
S^c_{b^{-1}h}[\tilde u] \cap U \subset S_h[\tilde u] \cap U \subset S^c_{bh}[\tilde u] \label{equi03}
\end{align}
for some constant \( b > 0 \) independent of \( h \).

    \item \textbf{Uniform density.} The affine invariance of the uniform density property ensures that
    \[ \frac{|S_h^c[\tilde{v}] \cap V|}{|S_h^c[\tilde{v}]|} \geq \delta \]
    for a constant \( \delta > 0 \) independent of \( h \). Indeed, it follows by taking limit of \eqref{udvh}.
    Similarly to \eqref{vol}, we have \begin{equation}\label{vol1}
|S_h[\tilde v]\cap V| \approx |S_h^c[\tilde v]\cap V| \approx |S_h^c[\tilde v]| \approx h^{\frac{n}{2}}.
\end{equation}

    To show the uniform density for \(\tilde u\), we proceed as follows. Let \(M>0\) be a large constant to be determined. In the following, we always assume $h$ is small such that \eqref{loctidu1} holds. 
     By performing an affine transformation, we may assume that
	\[S_{\frac{h}{M}}^c[\tilde{v}] \approx B_{\left(\frac{h}{M}\right)^{\frac{1}{2}}}(0).\]
Using the duality between \(S_{\frac{h}{M}}^c[\tilde{v}]\) and \(D\tilde{v}(S_{\frac{h}{M}}^c[\tilde{v}]),\) we find that
\begin{equation}\label{levcon2}
0 \in D\tilde{v}\left(S_{\frac{h}{M}}^c[\tilde{v}]\right) \approx B_{\left(\frac{h}{M}\right)^{\frac{1}{2}}}(z)
\end{equation}
for some \(z \in U.\) It follows that 
$\left|D\tilde{v}\left(S_{\frac{h}{M}}^c[\tilde{v}]\right)\right| \approx (h/M)^{\frac{n}{2}}.$

For any \(y \in S_{\frac{h}{M}}^c[\tilde{v}],\) from \eqref{tildeu} we have
\[\tilde{u}(D\tilde v(y)) = y \cdot D\tilde v(y) - \tilde v(y) \leq CM^{-1}h < b^{-1}h\]
by choosing \(M>Cb\) to be a universal constant independent of $h$. Hence,
\begin{equation}\label{levcon1}
D\tilde{v}\left(S_{\frac{h}{M}}^c[\tilde{v}] \right) \subset S_{b^{-1}h}[\tilde u] \cap \overline{U} \subset S^c_h[\tilde u]\cap \overline U.
\end{equation}
Combining the above estimates, we conclude that
\[ |S^c_h[\tilde u]\cap U| \geq C_1h^{\frac{n}{2}} \]
for some constant \(C_1\) independent of \(h.\)

By the Alexandrov type estimate (see, for instance, \cite[estimate (4)]{FK1}), we obtain
\begin{align*}
h^n &\geq C\left|\left(\frac{1}{2}S^c_h[\tilde u]\right) \cap U\right| \cdot \left|S^c_h[\tilde u]\right| \\
&\geq C_2\left|S^c_h[\tilde u] \cap U\right| \cdot \left|S^c_h[\tilde u]\right|,
\end{align*}
where the second inequality follows from the convexity of $U$ near $0$. Combining the above two estimates, we obtain the uniform density estimate for $\tilde u$:
\[
\frac{\left|S_h^c[\tilde u] \cap U\right|}{\left|S_h^c[\tilde u]\right|} \geq \delta
\]
for some $\delta > 0$, independent of $h$.

    \item \textbf{Conjugation between \( S^c_h[\tilde{u}] \) and \( S^c_h[\tilde{v}] \).}
    The uniform density property implies that \( S^c_h[\tilde{u}] \) and \( S^c_h[\tilde{v}] \) are conjugate for sufficiently small \( h \). More precisely, if there exists an affine transformation \( A \) with \( \det\, A = 1 \) such that
\( AS^c_h[\tilde{v}] \approx B_{h^{\frac{1}{2}}}(0) \), then it follows that \( (A^t)^{-1}S^c_h[\tilde{u}] \approx B_{h^{\frac{1}{2}}}(0) \). 

To see why this is the case, let \( A \) be the affine transformation such that \( \det\, A = 1 \) and \( AS^c_h[\tilde{v}] \approx B_{h^{\frac{1}{2}}}(0) \), and define
	\[ \bar{v}(y) := \tilde{v}(A^{-1}y),\quad\text{ and }\ \ \bar{u}(x) := \tilde{u}(A^t x). \] 
Then \( D\bar{v} \) solves the optimal transport problem between the distributions of \( AV \) and \( (A^t)^{-1}U \).
Note that now \( S^c_h[\bar{v}] \approx B_{h^{\frac{1}{2}}}(0) \). We aim to show \( S^c_h[\bar{u}] \approx B_{h^{\frac{1}{2}}}(0) \) as well.

Let \( M \) be as in \eqref{levcon1}. By observation (b) in the proof of \cite[Lemma 4.1]{C96}, we have  
\[
\frac{M}{C_n}S_{\frac{h}{M}}^c[\bar v] \subset S_h^c[\bar v] \subset C_nMS_{\frac{h}{M}}^c[\bar v].
\]
Hence, for some constant \( C_1 \) independent of \( h \), we have
\[
B_{C_1^{-1}h^{\frac{1}{2}}}(0) \subset S_{\frac{h}{M}}^c[\bar v] \subset B_{C_1h^{\frac{1}{2}}}(0).
\]
By the same argument leading to \eqref{levcon2} and \eqref{levcon1}, we have that for some \( z \in U \) and some constant \( c_1 > 0 \),
\[
B_{c_1h^{\frac{1}{2}}}(z) \subset D\bar v\left(S_{\frac{h}{M}}^c[\bar v] \right) \subset S^c_h[\bar u].
\]
By the uniform density property, we have \( |S^c_h[\bar u]| \approx h^{\frac{n}{2}} \).
From the fact that \( S^c_h[\bar u] \) is balanced with respect to the origin, it follows that
\[
(A^t)^{-1}S^c_h[\tilde{u}] = S^c_h[\bar u] \approx B_{h^{\frac{1}{2}}}(0).
\]

Additionally, we can conclude that: for any \( x \in S^c_h[\tilde{u}] \) and \( y \in S^c_h[\tilde{v}] \), as \( |(A^t)^{-1} x| \approx h^{\frac{1}{2}} \) and \( |A y| \approx h^{\frac{1}{2}} \), it follows that 
\begin{equation}\label{dualex1}
|x \cdot y|=|((A^T)^{-1} x)\cdot (A y)| \leq Ch \quad \forall\, x \in S^c_h[\tilde{u}],\ \forall\, y \in S^c_h[\tilde{v}]
\end{equation}
for some constant \( C \) independent of \( h \).     
    
    \item \textbf{Tangential $C^{1, 1-\epsilon}$ estimate.} 
    Owing to the flatness of \( \partial U \) near the origin in the directions of \( e_2, \ldots, e_{n-1} \), we invoke \cite[Corollary 1.1]{C96} to deduce that \( \tilde{u} \) is \( C^{1,1} \) in these directions. Consequently, for any small \( h>0 \), we have
\[
C_1h^{\frac{1}{2}}e \in S_h^c[\tilde{u}]\quad  \forall\, e \in \text{span}\{e_2,\ldots,e_{n-1}\}
\]
where \( C_1>0 \) is a constant independent of \( h \).
Then, by \eqref{dualex1},  
\[
C_1h^{\frac{1}{2}}e \cdot y \leq Ch \quad \forall\, y \in S_h^c[\tilde{v}].
\]
Therefore,
\begin{equation}\label{kuanest1}
\sup\{|y \cdot e| : y \in S_h^c[\tilde{v}]\} \leq Ch^{\frac{1}{2}} \quad  \forall\, e \in \text{span}\{e_2,\ldots,e_{n-1}\}. 
\end{equation}
Since \( \partial V \) is smooth near the origin, we apply the result in \cite[Lemma 3.1]{CLW1} to infer that \( \tilde{v} \) is \( C^{1,1-\epsilon} \) tangentially for any $\epsilon>0$. More precisely, 
\begin{equation}\label{kuanest2}
B_{C_\epsilon h^{\frac{1}{2}+\epsilon}}(0) \cap \{x_1=0\} \subset S_h^c[\tilde{v}].
\end{equation}
  \end{enumerate}

\vskip 20pt
{\bf Second blow-up.} 
Suppose that \(S_h^c[\tilde v]\) is comparable to an ellipsoid \(E\) centred at the origin, which is given by \eqref{elliptan}. 
Thanks to estimates \eqref{kuanest1} and \eqref{kuanest2}, we can improve the bounds in \eqref{ai000} such that 
\begin{align}
0 < & a_1 \leq C_\epsilon h^{\frac{2}{3}-\epsilon}, \nonumber \\
c_\epsilon h^{\frac{1}{2}+\epsilon} \leq & a_i \leq C_\epsilon h^{\frac{1}{2}-\epsilon} \quad \text{ for } i = 2, \ldots, n-1, \label{ai1} \\ 
c_\epsilon h^{\frac{1}{2}+\epsilon} \leq & a_n \leq C_\epsilon h^{\frac13-\epsilon}. \nonumber
\end{align}
Furthermore, the \(C^{1,\beta}\) regularity of \(\tilde{v}\) within \(B_{r_0}(0) \cap V\) ensures that
\begin{equation}\label{ai2}
B_{Ch^{\frac{1}{1+\beta}}}(0) \cap V \subset S_h^c[\tilde{v}].
\end{equation}

For the given $h>0$ small, let $A_{h1}=T_1$ defined in \eqref{afT1}, $A_{h2}=T_2$ defined in \eqref{afT2}, and $A_h = A_{h2} \circ A_{h1}$
that maps \(E\) onto the unit ball \(B_1(0)\). 
Define \(\tilde v_h: \mathbb{R}^n \rightarrow \mathbb{R}\) by 
\[\tilde v_h(x) := \frac{1}{h} \tilde v(A_h^{-1}x).\] 

Take any point \(x = (x_1, x'') \in \{x_n = 0\} \cap \partial V\) such that \(|x''| \leq h^{\frac{1}{2}-5\epsilon}\).
From \eqref{partialV},
\[x_1 = \bar\rho(x'') \leq Ch^{1-10\epsilon}.\] 
By \eqref{ai2}, we deduce that \(\|A_h\| \leq \frac{1}{C} h^{-\frac{1}{1+\beta}}\), then it follows that
\begin{equation}\label{fat1}
\text{dist}\left(A_hx, A_h\text{span}\{e_2, \ldots, e_{n-1}\}\right) \leq C_\epsilon h^{1-10\epsilon-\frac{1}{1+\beta}} \to 0\quad \text{ as } h \to 0,
\end{equation}
provided \(\epsilon\) is sufficiently small.
Meanwhile, for \(x = (x_1, x'') \in \{x_n = 0\} \cap \partial V\) with \(|x''| = h^{\frac{1}{2}-5\epsilon}\), we have 
\begin{equation}\label{fat2}
|A_hx| \geq C_\epsilon h^{\frac{1}{2}-5\epsilon-(\frac{1}{2}-\epsilon)} =  C_\epsilon h^{-4\epsilon} \to \infty \quad \text{ as } h \to 0,
\end{equation}
provided \(\epsilon > 0\).

Thanks to \eqref{fat1} and \eqref{fat2}, and by convexity of $V$, we can conclude that the sequence (possibly passing to a subsequence) \( A_h(\{x_n=0\} \cap \partial V)\} \) converges locally uniformly to an \((n-2)\)-dimensional subspace \(\tilde{H}^*\) of \(\mathbb{R}^n\). Invoking convexity once more and further passing to a subsequence if necessary, we may assume that \( A_h V \) converges locally uniformly to a smooth convex set represented by \(\tilde{\Omega}^* = \omega^* \times \tilde{H}^*\), where \(\omega^*\) is a two-dimensional convex set with smooth boundaries. The smoothness of \(\omega^*\) can be inferred through an argument analogous to that of \eqref{gp22} to \eqref{gp111}.


Invoking convexity once more and passing to a subsequence, we can assume that the set \((A_h^t)^{-1}(U \cap B_{r_1}(0))\) converges locally uniformly to a convex set \(\tilde{\Omega}\) in the Hausdorff distance. According to \eqref{partialU}, we find that
\[
\{x \in \mathbb{R}^n : x_1 = x_n = 0\} \cap B_{r_1}(0) \subseteq U \cap B_{r_1}(0),
\]
and by passing to a further subsequence if necessary, we may deduce that $$(A_h^t)^{-1}\left(\{x \in \mathbb{R}^n : x_1 = x_n = 0\}\right)$$ converges locally uniformly to an \((n-2)\)-dimensional subspace \(\tilde{H}\) of \(\mathbb{R}^n\) in the Hausdorff distance.
Consequently, \(\tilde{H} \subset \partial \tilde{\Omega}\), and by convexity, it follows that
\(\tilde{\Omega} = \omega \times \tilde{H}\) for some two-dimensional convex set \(\omega\).
Furthermore, upon passing to another subsequence if necessary, we may assume that \((A_h^t)^{-1}e_1\) and \(A_he_n\) converge to the vectors \(\tilde{e}^*\) and \(\tilde{e}\), respectively, such that \(\tilde{e}^* \perp \tilde{e}\) and
\begin{align*}
\tilde{\Omega} &\subseteq \{x\in\mathbb{R}^n : x \cdot \tilde{e} > 0\}, \\
\tilde{\Omega}^* &\subseteq \{x\in\mathbb{R}^n : x \cdot \tilde{e}^* > 0\}.
\end{align*}

Similarly to the discussion after \eqref{gp111} in the proof of Lemma \ref{gdspl1} (or see the proof of \cite[Lemma 5.19]{CLW3}),  by performing an additional affine transformation, we further have 
\begin{equation}
\tilde{\Omega} = \{x \in \mathbb{R}^n : x_n \geq \tilde{\rho}(x_1)\}, 
\end{equation}
where \(\tilde\rho \geq 0\) is a convex function defined in a neighbourhood of \(0\) with \(\tilde\rho(0) = 0\), and
\begin{equation}
\tilde{\Omega}^* = \{x \in \mathbb{R}^n : x_1 \geq \tilde{\rho}^*(x_n)\},
\end{equation}
where \(\tilde\rho^* \geq 0\) is a smooth convex function defined in a neighbourhood of \(0\) with \(\tilde\rho^*(0) = 0\).

Additionally, we may assume that \(\tilde v_h\) converges to \(\tilde v_\infty\) locally uniformly, where \(\tilde v_\infty\) satisfies \(\tilde v_\infty(0) = 0\), \(\tilde v_\infty \geq 0\), and
\begin{equation}\label{maulim}
\begin{aligned}
\det\, D^2 \tilde v_\infty &= c_0 \chi_{_{\tilde{\Omega}^*}} \quad \text{in } \mathbb{R}^n, \\
D\tilde v_\infty(\tilde{\Omega}^*) &= \tilde{\Omega}
\end{aligned}
\end{equation}
for some constant \(c_0 > 0\).

The above blow-up limits satisfy all the conditions required in \cite[Proposition 5.1 (ii)]{CLW3}.
Finally, a contradiction can be derived by following the argument presented in \cite[Section 6.2]{CLW3}, concluding that Case \textbf{I} cannot occur.

\subsection{Case \textbf{II}: $\nu\cdot \nu^*=0,\ \hat\nu \cdot \hat\nu^* = 0$, and $\nu, \nu^*, \hat\nu^*$ are coplanar.}

As before, we assume:
\(0 \in \mathcal{F}\), \(y_0 = Du_1(0) = le_n \in \partial\Omega^*_1\), and \(\hat{y}_0 = Du_2(0) = -le_n \in \partial\Omega^*_2\), where \(l>0\) is a universal constant;
the unit normals \(\nu = \nu(0) = e_n\), \(\nu^* = \nu^*(y_0) = e_1\) and \(\hat{\nu}^* = \hat{\nu}^*(\hat{y}_0) = e_1\) or \(-e_1\). By subtracting a constant, we may assume $u_i(0)=0$, $i=1, 2$ and $v(y_0)=v(\hat y_0)=0.$
 
Define the quantities
\begin{align*}
d_h &:= \sup\{|y\cdot e_1| : y\in S_h[v]\cap \Omega_1^*\}, \\
\hat{d}_h &:= \sup\{|y\cdot e_1| : y\in S_h[v]\cap \Omega_2^*\}.
\end{align*}
We then have two possible scenarios: $d_h\geq \hat{d}_h$ or $d_h<\hat{d}_h$.

Consider a sequence $\{h_k\}\rightarrow 0$ such that either $d_k:=d_{h_k}\geq \hat{d}_{h_k}=:\hat d_k$ holds for all $k$, or the opposite inequality holds for all $k$. Assume without loss of generality that the former is true for all $k$. 
For any unit vector $e\in \text{span}\{e_2, \cdots, e_n\}$, define
\begin{align*}
d_{k,e} &:= \sup\{|(y-y_0)\cdot e| : y\in S_{h_k}[v]\cap \Omega_1^*\}, \\
\hat{d}_{k,e} &:= \sup\{|(y-\hat y_0)\cdot e| : y\in S_{h_k}[v]\cap \Omega_2^*\}.
\end{align*}
By the proof of Lemma \ref{ide1}, we can obtain 
\begin{equation}\label{estwid1}
d_{k, e},\ \hat{d}_{k, e} \leq C_{\epsilon}h^{\frac{1}{3}-\epsilon} \quad \forall\, e\in \text{span}\{e_2, \cdots, e_n\}.
\end{equation}

Suppose the centred section $S_{h_k}^c[v](y_0)$ is comparable to $E+\{y_0\}$, where $E$ is an ellipsoid centred at the origin. 
Similarly as before, the ellipsoid $E$ can be expressed by \eqref{elliptan}, namely
\begin{equation*} 
E = \left\{y = y_1e_1 + \sum_{i=2}^n \bar{y}_i \bar{e}_i : \frac{y_1^2}{a_1^2} + \sum_{i=2}^n \frac{(\bar{y}_i - k_i y_1)^2}{a_i^2} \leq 1\right\},
\end{equation*}
where $\bar e_2, \cdots, \bar e_n$ are the principal directions of  $E\cap\{x_1=0\}.$
Using Lemma \ref{ide1} and the tangential $C^{1,1-\epsilon}$ estimate of $v$ at $y_0$, we have the same estimates as in \eqref{ai000} that
\begin{equation}\label{estwid333}
0 < a_1 < C_\epsilon h_k^{\frac{2}{3}-\epsilon}, \quad C_\epsilon h_k^{\frac{1}{2}+\epsilon} < a_i < C_\epsilon h^{\frac{1}{3}-\epsilon}\ \  \text{ for } i=2, \cdots, n.
\end{equation}
By a similar reasoning as in \eqref{estwid21}, it follows that
\begin{equation}\label{estwid3}
|k_i| \leq C a_1^{-\frac{1}{2}}, \quad i = 2, \cdots, n.
\end{equation}
Finally, we remark that from the above assumption and definition,
\begin{equation}\label{estwid4}
\hat d_k \leq d_k \approx a_1. 
\end{equation}

For each $k=1,2,\cdots$, let $T_{k1}$ be the transformation given in \eqref{afT1} and $T_{k2}$ be the transformation given in \eqref{afT2}, and $T_k = T_{k2} \circ T_{k1}$.
Then \( T_k(E) = B_1(0) \),  and \( T_k\left(S_{h_k}^c[v](y_0)\right) \approx B_1(T_k y_0) \).
Let  \( \Omega^*_{ki} := T_k(\Omega^*_i) \), for \( i = 1,2 \),
and \( z_k := T_k y_0 \in \partial \Omega^*_{k1} \).
By Corollary \ref{co21}, it follows that 
\begin{equation}\label{zklev1}
 B_{\frac{1}{C}}(z_k) \cap \Omega^*_{k1} \subseteq T_k(S_{h_k}[v] \cap \Omega_1^*) \subseteq B_C(z_k).
 \end{equation}  

To analyse the scaling behaviour, we define the rescaled functions
\[
v_k(\cdot) = \frac{1}{h_k} v\left(T_k^{-1}(\cdot)\right); \quad u_{ki}(\cdot) = \frac{1}{h_k} u_i\left(T_k^t(\cdot)\right), \quad i=1, 2.
\]
Denote \(\Omega_{ki} = \frac{1}{h_k}(T_k^t)^{-1}\Omega_i\), for \(i = 1, 2\), and \(\mathcal{F}_k = \frac{1}{h_k}(T_k^t)^{-1}\mathcal{F}\). 
For any point \(x \in \mathcal{F}_k\), the unit normal \(\nu_k\) to \(\mathcal{F}_k\) satisfies 
	\[ \nu_k(x) = \frac{Du_{k1}(x) - Du_{k2}(x)}{|Du_{k1}(x) - Du_{k2}(x)|}. \]  
After a suitable rotation of coordinates that aligns \(T_k e_n\) with the \(e_n\)-axis, we have 
	\begin{equation}\label{go2i}
		z_k = l_k e_n,\quad\text{ where }\ l_k \to \infty\ \text{ as } k \to \infty. 
	\end{equation}
Moreover, at the points \(z_k=Du_{k1}(0)\) and \(-z_k=Du_{k2}(0)\), the unit inner normals to \(\partial \Omega^*_{ki}\) for \(k = 1, 2\) are parallel to \(e_1\). 
By Lemma \ref{c1lemma}, we have that \( v_k \) is continuously differentiable in \( \left(B_1(z_k)\cap \overline{\Omega^*_{k1}}\right)\cup\left(B_1(-z_k)\cap \overline{\Omega^*_{k2}}\right) \) for \( k \) large.

Since \(d_k \geq \hat{d}_k  \) and $S_1[v_k]\cap\Omega^*_{k1}$ is normalised in the sense of \eqref{zklev1}, we conclude
	\begin{equation} \label{bde1}
		\sup\{ |y\cdot e_1| : y\in S_1[v_k]\cap\Omega^*_{ki}, \ i=1,2 \} \leq C \quad\forall\,k=1,2,\cdots, 
	\end{equation}
where $C>0$ is a universal constant. 
\begin{lemma}\label{limitdist}
The distance between \(S_1[v_k] \cap \Omega^*_{k1}\) and \(S_1[v_k] \cap \Omega^*_{k2}\) satisfies:
  \[
  \frac{\mathrm{dist}\big(S_1[v_k] \cap \Omega^*_{k1},\ S_1[v_k] \cap \Omega^*_{k2}\big)}{l_k} \rightarrow 2 \quad \text{as } k \rightarrow \infty.
  \]
  Moreover, \(\text{diam}\left(S_1[v_k] \cap \Omega^*_{k1}\right) \ll l_k\) and \(\text{diam}\left(S_1[v_k] \cap \Omega^*_{k2}\right) \ll l_k\) for \(k\) large.
\end{lemma}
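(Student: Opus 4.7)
The plan is to show $\mathrm{diam}(S_1[v_k]\cap \Omega^*_{ki}) = o(l_k)$ for both $i=1,2$. Once this is established, combining with $T_k y_0 = z_k = l_k e_n$ and $T_k \hat{y}_0 = -z_k = -l_k e_n$ (so that $|z_k - (-z_k)| = 2l_k$), the distance between the two sets is sandwiched between $2l_k - o(l_k)$ and $2l_k + o(l_k)$, yielding the main limit, and the ``moreover'' clause is a direct byproduct.

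The first diameter bound is immediate from~\eqref{zklev1}: $S_1[v_k]\cap \Omega^*_{k1} \subset B_C(z_k)$ has diameter at most $2C$, which is $o(l_k)$ since $l_k \to \infty$ by~\eqref{go2i}. For the second bound, I would apply the explicit form of $T_k = T_{k2}\circ T_{k1}$ from~\eqref{afT1}--\eqref{afT2} to $S_1[v_k]\cap \Omega^*_{k2} = T_k(S_{h_k}[v]\cap \Omega_2^*)$. For any point $\hat y_0 + v \in S_{h_k}[v]\cap \Omega_2^*$, decomposed as $v = v_1 e_1 + \sum_{i=2}^n \bar v_i \bar e_i$, the transformation gives
\[
T_k(\hat y_0 + v) - (-z_k) = \tfrac{v_1}{a_1}\, e_1 + \sum_{i=2}^n \tfrac{\bar v_i - k_i v_1}{a_i}\,\bar e_i.
\]
The $e_1$-component is bounded by $\hat d_k/a_1 \leq d_k/a_1 \lesssim 1$ via~\eqref{estwid4}. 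For each $\bar e_i$-component, I would combine $|\bar v_i| \leq C_\epsilon h_k^{1/3-\epsilon}$ (from~\eqref{estwid1}) with the shear contribution $|k_i v_1| \leq C a_1^{-1/2}\cdot a_1 = C a_1^{1/2} \leq C_\epsilon h_k^{1/3-\epsilon/2}$ (using~\eqref{estwid3} and the upper bound $a_1 \leq C_\epsilon h_k^{2/3-\epsilon}$ from~\eqref{estwid333}), divided by the lower bound $a_i \geq C_\epsilon h_k^{1/2+\epsilon}$ from~\eqref{estwid333}. This yields $\bar e_i$-components bounded by $C_\epsilon h_k^{-1/6-2\epsilon}$, whence $S_1[v_k]\cap \Omega^*_{k2} \subset B_{C_\epsilon h_k^{-1/6-2\epsilon}}(-z_k)$.

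To close the argument, I need a matching lower bound on $l_k$. Since $T_{k1}$ fixes $e_n$ (which has zero $e_1$-component in the $\bar e$-decomposition), writing $e_n = \sum_{i=2}^n c_i \bar e_i$ with $\sum c_i^2 = 1$ gives
\[
l_k = l\,|T_{k2} e_n| = l\sqrt{\sum_{i=2}^n c_i^2/a_i^2}\ \geq\ \frac{l}{\max_i a_i}\ \geq\ c_\epsilon h_k^{-1/3+\epsilon}
\]
by~\eqref{estwid333}. Choosing $\epsilon$ small enough that $-1/3+\epsilon < -1/6-2\epsilon$ (e.g.\ $\epsilon < 1/18$), the ratio $\mathrm{diam}(S_1[v_k]\cap \Omega^*_{k2})/l_k \lesssim h_k^{1/6-3\epsilon} \to 0$, which finishes both the ``moreover'' clause and the main distance limit.

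The main delicate point is ensuring the powers of $h_k$ align correctly: the upper bound $a_1 \leq C_\epsilon h_k^{2/3-\epsilon}$ is precisely what makes the shear contribution $a_1^{1/2}$ comparable to the direct tangential bound $h_k^{1/3-\epsilon}$, and the lower bound $a_i \geq C_\epsilon h_k^{1/2+\epsilon}$ in the denominator (rather than some weaker bound) is essential to beat the normalization factor $l_k \sim 1/\max a_i$. Everything else is a routine application of the width, shear, and scale estimates already assembled in~\eqref{estwid1}--\eqref{estwid4}.
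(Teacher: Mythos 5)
Your proof is correct and follows essentially the same route as the paper's: bound the diameter of $S_1[v_k]\cap\Omega^*_{k2}$ by pushing the width estimates \eqref{estwid1}, \eqref{estwid333}, \eqref{estwid3}, \eqref{estwid4} through the explicit form of $T_k=T_{k2}\circ T_{k1}$ to get $\lesssim h_k^{-1/6-2\epsilon}$, bound the diameter of $S_1[v_k]\cap\Omega^*_{k1}$ directly from \eqref{zklev1} and \eqref{bde1}, and compare against a lower bound on $l_k$ obtained from the maximal semi-axis of the normalising ellipsoid. One small point worth flagging: your derived lower bound $l_k \gtrsim h_k^{-1/3+\epsilon}$ (which is what actually follows from $\max_i a_i \leq C_\epsilon h_k^{1/3-\epsilon}$ in \eqref{estwid333}) differs in the sign of $\epsilon$ from the paper's stated $l_k\geq C_\epsilon h_k^{-1/3-\epsilon}$; the latter does not follow from \eqref{estwid333} and appears to be a typo, but since both versions dominate $h_k^{-1/6-2\epsilon}$ for $\epsilon<1/18$, the conclusion is unaffected and your argument is sound.
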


\begin{proof}
  By the definition of \(T_{k1}\) and estimates \eqref{estwid333}--\eqref{estwid3}, the width of \(T_{k1}(S_{h_k} \cap \Omega_2^*)\) in the \(e_i\)-direction is upper bounded by \(C_\epsilon h_k^{\frac{1}{3}-\epsilon}\) for \(i = 2, \dots, n\). 
  Subsequently, by \eqref{estwid333} again and the definitions of \(T_{k2}\), we infer that the width of \(S_1[v_k] \cap \Omega^*_{k2} = T_k(S_{h_k} \cap \Omega_2^*)\) in the \(e_i\)-direction is bounded from above by:
  \begin{equation}\label{estwid5}
    \frac{C_\epsilon h_k^{\frac{1}{3}-\epsilon}}{\min\{a_i : i=2, \dots, n\}} \leq C_\epsilon h_k^{-\frac{1}{6}-2\epsilon} \quad \text{for } i = 2, \dots, n.
  \end{equation}
  Additionally, from \eqref{bde1}, the width of \(S_1[v_k] \cap \Omega^*_{k2}\) in the \(e_1\) direction is upper bounded by a constant \(C\) independent of \(k\). 
  
By the definition of \(T_{k}\) and \eqref{estwid333}, one can see that \(l_k \geq C_\epsilon h_k^{-\frac{1}{3}-\epsilon} \gg C_\epsilon h_k^{-\frac{1}{6}-2\epsilon}\).
 Hence, \(\text{diam}\left(S_1[v_k] \cap \Omega^*_{k1}\right) \ll l_k\) and \(\text{diam}\left(S_1[v_k] \cap \Omega^*_{k2}\right) \ll l_k\) for \(k\) large.
Therefore, from the above discussions we can conclude that:
  \[
  \frac{\mathrm{dist}(S_1[v_k] \cap \Omega^*_{k1}, S_1[v_k] \cap \Omega^*_{k2})}{l_k} = \frac{|z_k-(-z_k)|}{l_k} +o(1) \rightarrow 2 \quad \text{as } k \rightarrow \infty.
  \]
\end{proof}

Similar to \eqref{ccin1}, there exists an open cone $\mathcal{C}_k$ with vertex $z_k$ and the size of opening independent of $k,$ such that 
\begin{equation}\label{conek}
B_1(z_k)\cap \mathcal{C}_k\subset \Omega_{k1}^*.
\end{equation}

\begin{lemma}\label{vest1}
  There exist constants \(\beta \in (0, 1)\), \( a>1 \), \(r_1>0\) and \(C > 0\) (independent of \(k\)) such that for any \( y \in B_{r_1}(z_k)\), we have
  \begin{equation}\label{alpha2}
    0 \leq v_k(y) \leq C |y - z_k|^{1 + \beta},
    \end{equation}
 and for any \(y \in B_{r_1}(z_k) \cap \overline{\Omega_{k1}^*}\),
  \begin{equation}\label{beta1}
   v_k(y) \geq \frac{1}{C} |y - z_k|^{1 + a}.
  \end{equation}
\end{lemma}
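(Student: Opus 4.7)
The plan is to mirror the argument used for Lemma \ref{vesttoday} in Case \textbf{I}, since the rescaled functions $v_k$ in Case \textbf{II} enjoy exactly the same normalization and affine-invariant properties that were exploited there. The starting point is to check that $v_k$ solves a Monge--Amp\`ere equation with bounded right-hand side on the convex set $\Omega^*_{k1}$ near $z_k$, with no mass outside. By \eqref{lockey3} and the change of variables $y \mapsto T_k^{-1}y$, together with the volume estimate \eqref{vol} forcing $h_k^n(\det T_k)^2 \approx 1$, we obtain, for constants $r_0, C > 0$ independent of $k$,
\[
\tfrac{1}{C}\chi_{_{\Omega^*_{k1}}} \le \det D^2 v_k \le C\chi_{_{\Omega^*_{k1}}} \quad \text{in } B_{r_0}(z_k)
\]
in the Alexandrov sense.

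For the upper bound \eqref{alpha2}, I first work inside $B_{r_1}(z_k)\cap\overline{\Omega^*_{k1}}$. The normalization \eqref{zklev1} of the section $S_1[v_k]\cap\Omega^*_{k1}$, together with the uniform opening cone \eqref{conek} at the boundary point $z_k$, puts us in the situation where Caffarelli's boundary $C^{1,\beta}$ theory applies with universal constants. Since $v_k(z_k)=0$, $v_k\ge 0$, and $0$ lies in the subdifferential of $v_k$ at $z_k$, this yields $v_k(y)\le C|y-z_k|^{1+\beta}$ on $B_{r_1}(z_k)\cap\overline{\Omega^*_{k1}}$. To propagate the estimate to $B_{r_1}(z_k)\setminus\overline{\Omega^*_{k1}}$, I use that $\det D^2 v_k \equiv 0$ there in the Alexandrov sense, together with convexity of $v_k$ and the boundary values already controlled on $\partial\Omega^*_{k1}\cap B_{r_1}(z_k)$; convex extension from such boundary data preserves the growth rate $|y-z_k|^{1+\beta}$.

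For the lower bound \eqref{beta1}, the strategy is the one used in \cite[Lemma 5.8]{CLW3}. The affine-invariance of the uniform density property (Lemma \ref{ud1}) yields $|S^c_h[v_k]\cap\Omega^*_{k1}|/|S^c_h[v_k]|\ge\delta$ for a $k$-independent $\delta$. Combined with the convexity of $\Omega^*_{k1}$, this gives a doubling property of the Monge--Amp\`ere measure $\det D^2 v_k$ on centred sections $S^c_h[v_k](z_k)$ with universal constants. Invoking the geometric decay of centred sections \cite[Lemma 2.2]{C96}, one obtains a quantitative strict convexity estimate $v_k(y)\ge C^{-1}|y-z_k|^{1+a}$ on $B_{r_1}(z_k)\cap\overline{\Omega^*_{k1}}$ for some $a>1$ and some $C>0$ independent of $k$.

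The main obstacle is to make sure that none of the constants ($\beta$, $a$, $C$, $r_1$) degenerate along the blow-up sequence. This is precisely where the Case \textbf{II} setup pays off: the normalization $T_k(S_{h_k}^c[v](y_0)) = B_1(0)$, the cone \eqref{conek} with $k$-independent opening, the affine invariance of the uniform density estimate, and the fact that all of Caffarelli's geometric-decay and doubling constants depend only on $n$, $\lambda$ and the density/normalization data, collectively guarantee that every constant extracted from \cite{C96} and \cite{CLW3} is universal in $k$. Once this uniformity is verified, both inequalities in the statement follow directly from the two regimes described above.
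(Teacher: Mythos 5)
Your treatment of the estimates restricted to $B_{r_1}(z_k)\cap\overline{\Omega^*_{k1}}$ is in line with what the paper does: invoke the affine invariance of the uniform density property, deduce doubling of $\det D^2 v_k$ with $k$-independent constants, run the geometric-decay machinery from \cite{C96} and the argument of Lemma \ref{vesttoday} (\cite[Lemma 5.8]{CLW3}), and extract both \eqref{alpha2}-type upper growth and \eqref{beta1}-type lower growth on $B_{r_1}(z_k)\cap\overline{\Omega^*_{k1}}$ with universal constants. That part is fine.

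The gap is in your last step, where you extend \eqref{alpha2} to the full ball $B_{r_1}(z_k)$. You claim that since $\det D^2 v_k\equiv 0$ outside $\Omega^*_{k1}$ and the boundary values are controlled, "convex extension from such boundary data preserves the growth rate." This is not true: a convex function vanishing on $\overline{\Omega^*_{k1}}$ at $z_k$ with quadratic-type growth up to the boundary can still pick up a non-degenerate linear term in the outward normal direction. For example, with $\Omega^*=\{y_1>0\}$ and $v=y_1^2+y_2^2$ on $\{y_1\ge 0\}$, the function $v(y)=y_2^2-a y_1$ for $y_1<0$ is convex with zero Monge--Amp\`ere mass and matches the boundary data, yet grows like $a|y|$, not $|y|^{1+\beta}$, as $y\to 0$ from outside. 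So your proposed reasoning does not close. The actual extension in the paper is a genuinely separate argument and is the real content of this lemma: from \eqref{ccin2} and the uniform cone \eqref{conek} one gets $B_{c_1\tilde{h}^{1/(1+\beta)}}(z_k)\cap\mathcal{C}_k\subset S^c_{\tilde{h}}[v_k](z_k)$; then the crucial input is that $S^c_{\tilde{h}}[v_k](z_k)$ is \emph{balanced} with respect to $z_k$, so the opposite cone $W_k$ also lies inside the section; the convex hull of a uniformly-opening cone and its opposite contains a full ball of radius $\approx \tilde{h}^{1/(1+\beta)}$; this gives $B_{\tilde{h}^{1/(1+\beta)}/C}(z_k)\subset S^c_{\tilde h}[v_k]\subset S_{b\tilde h}[v_k]$, which is exactly the upper bound \eqref{alpha2} on all of $B_{r_1}(z_k)$. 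Without the centred-section balance, there is nothing to rule out linear growth of $v_k$ on the complement of $\Omega^*_{k1}$, and \eqref{alpha2} (which is used later, e.g.\ in the proof of Lemma \ref{alpha1} where the supremum over $B_{r_1}(z_k)$ rather than $B_{r_1}(z_k)\cap\Omega^*_{k1}$ is essential) would fail.
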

\begin{proof}
By applying an argument similar to the proof of Lemma \ref{vesttoday}, we can obtain 
	\begin{equation}\label{vkes}
		\frac{1}{C} |y - z_k|^{1 + a} \leq v_k(y) \leq C |y - z_k|^{1 + \beta} \quad \forall\,y \in B_{r_1}(z_k) \cap \overline{\Omega_{k1}^*}
	\end{equation}
with constants \(\beta \in (0, 1)\), \(a > 1\), \(r_1 > 0\), and \(C > 0\) independent of \(k\). 

Given that relation \eqref{equi0} remains invariant under affine transformations, the function \( v_k \) preserves this relation with the same constant \( b \) in \eqref{equi0}. 
By \eqref{vkes}, we deduce that
\begin{equation}\label{ccin2}
B_{c_1\tilde{h}^{\frac{1}{1+\beta}}} (z_k)\cap \Omega^*_{k1} \subset S_{\frac{1}{b}\tilde{h}}[v_k](z_k)\cap\Omega^*_{k1} \subset S^c_{\tilde{h}}[v_k](z_k) \cap \Omega^*_{k1}\quad\forall\,\tilde{h}\in (0, \tilde h_0),
\end{equation}
where $\tilde h_0$, $c_1 = (Cb)^{-\frac{1}{1+\beta}}$ are constants independent of \( k \).

Combining inequalities \eqref{conek} and \eqref{ccin2}, we infer that
\[ 
B_{c_1\tilde{h}^{\frac{1}{1+\beta}}} (z_k)\cap \mathcal{C}_k\subset S^c_{\tilde{h}}[v_k](z_k)\quad\forall\,\tilde{h}\in (0, \tilde h_0).
\]
Taking into account that \( S^c_{\tilde{h}}[v_k](z_k) \) is centred at \( z_k \), we conclude that the ``opposite" cone
\[
W_k := -\frac{1}{C_n}\left( \big(B_{c_1\tilde{h}^{\frac{1}{1+\beta}}} (z_k) \cap \mathcal{C}_k\big) - \{z_k\} \right) + \{z_k\} \subset S^c_{\tilde{h}}[v_k](z_k)
\]
for some large constant \( C_n \), dependent only on \( n \).
Thanks to the convexity of \( S^c_{\tilde{h}}[v_k](z_k) \), it must contain the convex hull of
\[ 
\big(B_{c_1\tilde{h}^{\frac{1}{1+\beta}}} (z_k)\cap \mathcal{C}_k\big) \cup W_k.
\]
Since the size of opening of the cone $\mathcal{C}_k$ is independent of $k$, one can see that
\[ 
B_{\frac{1}{C}\tilde{h}^{\frac{1}{1+\beta}}}(z_k) \subset \text{convex hull of}\  \big(B_{c_1\tilde{h}^{\frac{1}{1+\beta}}} (z_k)\cap \mathcal{C}_k\big) \cup W_k,
\]
where \( C \) is a constant independent of \( k \). Consequently, we obtain
	\begin{equation} \label{esvout}
		B_{\frac{1}{C}\tilde{h}^{\frac{1}{1+\beta}}}(z_k) \subset S^c_{\tilde h}[v_k]\subset S_{b\tilde{h}}[v_k]
	\end{equation}
for $k$ large. The desired estimate \eqref{alpha2} readily follows from \eqref{esvout}.
\end{proof}

\begin{lemma}\label{alpha1}
  There exists a universal constant \(N\) such that for \(u_{k1}-l_kx_n\) and \(u_{k2}+l_kx_n\), we have the inclusions
  \begin{align}
    S_{\frac{1}{N}}[u_{k1}-l_kx_n](0) &\cap \mathcal{F}_k \subset Dv_k(S_1[v_k] \cap \overline{\Omega^*_{k1}}), \label{vust} \\
    S_{\frac{1}{N}}[u_{k2}+l_kx_n](0) &\cap \mathcal{F}_k \subset Dv_k(S_1[v_k] \cap \overline{\Omega^*_{k2}}), \label{vust2}
  \end{align}
  respectively.
\end{lemma}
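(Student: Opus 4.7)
The strategy is to exploit the Legendre duality between $u_{k1}$ and $v_k$ together with the uniform estimates of Lemma~\ref{vest1}. I shall describe the proof of the first inclusion; the second follows by the symmetric argument, replacing $(z_k,\Omega_{k1}^*,u_{k1})$ with $(-z_k,\Omega_{k2}^*,u_{k2})$.

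Fix $x \in S_{\frac{1}{N}}[u_{k1}-l_kx_n](0) \cap \mathcal{F}_k$. By Theorem~\ref{t111} (applied in the rescaled coordinates), $u_{k1}$ admits a $C^{1,\beta}$ extension up to $\mathcal{F}_k$ from the $\Omega_{k1}$ side, so $y := Du_{k1}(x) \in \overline{\Omega_{k1}^*}$ is well-defined and Legendre duality yields $v_k(y) = x\cdot y - u_{k1}(x)$ together with $x = Dv_k(y)$. Writing $z_k = l_k e_n = Du_{k1}(0)$ and observing that $\tilde u_k(x) := u_{k1}(x) - l_k x_n$ is convex, nonnegative, with $\tilde u_k(0)=0$ and $D\tilde u_k(0)=0$, the Legendre identity rearranges to
\[
v_k(y) \;=\; x\cdot(y-z_k) \;-\; \tilde u_k(x) \;\leq\; |x|\,|y-z_k|.
\]
Since $x = Dv_k(y)$ holds automatically, the task reduces to showing $v_k(y)<1$ for $N$ large and universal.

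To control $|x|$, the upper bound $v_k(y)\leq C|y-z_k|^{1+\beta}$ on $B_{r_1}(z_k)$ (Lemma~\ref{vest1}) gives, by Legendre duality, the uniform-in-$k$ strict convexity estimate $\tilde u_k(x)\geq c|x|^{1+1/\beta}$ for $|x|$ below a universal threshold; indeed for such $x$ the supremum defining $\tilde u_k$ is attained inside $B_{r_1}(0)$, where the above bound applies. Hence $\tilde u_k(x)<1/N$ forces $|x|\leq C_0 N^{-\beta/(1+\beta)}$. To bound $|y-z_k|$, I combine the lower bound $v_k(y)\geq \frac{1}{C}|y-z_k|^{1+a}$ on $B_{r_1}(z_k)\cap\overline{\Omega_{k1}^*}$ (Lemma~\ref{vest1}) with $v_k(y)\leq|x|\,|y-z_k|$ to obtain $|y-z_k|^{a}\leq C|x|$, provided $y\in B_{r_1}(z_k)$. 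The latter is ensured, for $N$ large and universal, by the uniform $C^{1,1/a}$-continuity of $u_{k1}$ at $0$, which is dual to the strict convexity of $v_k$ at $z_k$. Consequently $|y-z_k|\leq C_1 N^{-\beta/(a(1+\beta))}$.

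Multiplying the two bounds,
\[
v_k(y) \;\leq\; |x|\,|y-z_k| \;\leq\; C_2\, N^{-\beta(1+1/a)/(1+\beta)},
\]
which is strictly less than $1$ once $N$ is sufficiently large depending only on $\beta$, $a$ and the universal constants. This gives $y\in S_1[v_k]\cap\overline{\Omega_{k1}^*}$, and hence $x = Dv_k(y)\in Dv_k(S_1[v_k]\cap\overline{\Omega_{k1}^*})$, proving the inclusion. The main obstacle is ensuring all constants are genuinely independent of $k$: this is guaranteed because Lemma~\ref{vest1}'s estimates are uniform in $k$, and uniformity is preserved under Legendre duality; the only delicate point is the small bootstrap justifying $y\in B_{r_1}(z_k)$ needed to invoke the lower bound of Lemma~\ref{vest1}, which follows from the uniform continuity modulus of $Du_{k1}$ at $0$ derived above.
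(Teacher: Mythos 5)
Your proof follows essentially the same strategy as the paper's: both rest on Legendre duality between $u_{k1}$ and $v_k$ together with the two-sided, $k$-uniform bounds of Lemma~\ref{vest1}. The paper organizes the argument by first establishing the inclusion \eqref{vust8} (that $B_{r_2}(0)\cap\overline{\Omega_{k1}}\subset Dv_k(S_1[v_k]\cap\overline{\Omega^*_{k1}})$, from the lower bound \eqref{beta1}) and separately the coercivity \eqref{vust3} of $\tilde u_k = u_{k1}-l_k x_n$ (from the upper bound \eqref{alpha2}), and then concatenates them. You instead push $x$ forward to $y=Du_{k1}(x)$ and bound $v_k(y)$ directly via the identity $v_k(y)=x\cdot(y-z_k)-\tilde u_k(x)\le|x|\,|y-z_k|$, multiplying bounds on each factor. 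These are the same ideas, reshuffled; the quantitative exponent you display in $N$ is a bonus but is not needed (once $|y-z_k|$ is below the universal radius on which \eqref{alpha2} gives $v_k<1$, you are done without the product bound).

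One point in your write-up is circular as phrased and deserves to be fixed. You justify $y\in B_{r_1}(z_k)$ by the "uniform $C^{1,1/a}$-continuity of $u_{k1}$ at $0$," but that continuity modulus is precisely the inequality $|y-z_k|^{a}\le C|x|$ that you derive \emph{from} the lower bound of Lemma~\ref{vest1}, which itself is only valid inside $B_{r_1}(z_k)$. (A similar remark applies to the phrase "for such $x$ the supremum defining $\tilde u_k$ is attained inside $B_{r_1}$.") The circle is easy to break without invoking anything new: by convexity of $v_k$ and the bound $v_k\ge \frac1C|y'-z_k|^{1+a}$ on $\partial B_{r_1}(z_k)\cap\overline{\Omega^*_{k1}}$, one gets the linear lower bound $v_k(y)\ge \frac{r_1^a}{C}|y-z_k|$ for all $y\in\overline{\Omega^*_{k1}}$ with $|y-z_k|\ge r_1$; combined with $v_k(y)\le|x|\,|y-z_k|$ this forces $|x|\ge r_1^a/C$, so once you have ensured $|x|$ is below this universal threshold, $y\in B_{r_1}(z_k)$ follows directly and Lemma~\ref{vest1} may be applied. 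With that small repair the argument is sound and matches the paper's.
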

\begin{proof}
By the duality between \(u_{k1}\) and \(v_k\) (see \eqref{dual222}), we have
\[ u_{k1}(x) = \sup\{x\cdot y - v_{k}(y) : y \in \Omega^*_{k1}\}. \]
By the strict convexity estimate of \( v_k \) (see \eqref{beta1}), we have 
\begin{equation}\label{vust8}
B_{r_2}(0) \cap \overline{\Omega_{k1}} \subset Dv_k\left(S_1[v_k] \cap \overline{\Omega^*_{k1}}\right)
\end{equation}
for some constant \( r_2 > 0 \) independent of \( k \).  
Hence, for any \( x \in B_{r_2}(0) \cap \overline{\Omega_{k1}} \),
\begin{align*}
u_{k1}(x) &= \sup\{x\cdot y - v_k(y) : y \in B_{r_1}(z_k) \cap \Omega_{k1}^*\} \\
&= \sup\{x\cdot y - v_k(y) : y \in B_{r_1}(z_k)\} \\
&\geq \sup\{x\cdot y - C|y - z_k|^{1+\beta} : y \in B_{r_1}(z_k)\} \\
&= \sup\{x\cdot (y - z_k) - C|y - z_k|^{1+\beta} : y \in B_{r_1}(z_k)\} + x\cdot z_k \\
  &\geq  C_1|x|^{1+a'}+l_kx_n
\end{align*}
for  \( a'=\frac{1}{\beta} > 1 \),
where \( C_1 \) is a constant independent of \( k \),  the first inequality follows from \eqref{alpha2}, and the last inequality
follows from a direct computation. It follows from the above estimate that
\begin{equation}\label{vust3}
u_{k1}(x) - l_k x_n \geq C_1 |x|^{1 + a'}\quad\forall\, x\in B_{r_2}(0) \cap \overline{\Omega_{k1}}.
\end{equation} 

By \eqref{zklev1}, we have that 
\[ B_{\frac{1}{C}}(z_k) \cap \overline{\Omega^*_{k1}} \subset S_1[v_k] \cap \overline{\Omega^*_{k1}}. \]
Then, the desired inclusion \eqref{vust} follows from \eqref{vust3} and \eqref{vust8}.

The proof of \eqref{vust2} is analogous. Indeed, Lemma \ref{vest1} also holds for \( v_{k} \) near \( \hat{z}_k := -z_k \) if we normalise \( S_{h_k}[v](z_k) \cap \Omega^*_2 \) instead of \( S_{h_k}[v](\hat z_k) \cap \Omega^*_1 \). It is important to note that both \eqref{vust} and \eqref{vust2} are affine invariant.
\end{proof}

By \eqref{beta1} and the duality between \( u_{k1} \) and \( v_k \) (see \eqref{dual222}), we have
\begin{align*}
u_{k1}(x) &= \sup\{x\cdot y - v_k(y) : y \in B_{r_1}(z_k) \cap \Omega_{k1}^*\} \\
&\leq \sup\{x\cdot y -  C|y - z_k|^{1+a} : y \in B_{r_1}(z_k) \cap \Omega_{k1}^*\} \\
&\leq \sup\{x\cdot y - C|y - z_k|^{1+a} : y \in \mathbb{R}^n\} \\
&= \sup\{x\cdot (y - z_k) - C|y - z_k|^{1+a} : y \in  \mathbb{R}^n\} + x\cdot z_k \\
&= C_1|x|^{1+\beta'}+l_kx_n
\end{align*}
for any \( x \in B_{r_2}(0)\cap\Omega_{k1} \), where \( \beta' = \frac{1}{a} \), and the last equality follows from the fact
that the Legendre transform of \( |x|^{1+a} \) is \( c|x|^{1+\frac{1}{a}} \) for some constant \( c>0 \).
Hence, it follows that \( u_{k1}(x) - l_kx_n \leq C_1|x|^{1+\beta'} \) for any \( x \in B_{r_2}(0)\cap\overline{\Omega_{k1}} \).

By the above inequality, for sufficiently large \( M \), we have
\begin{equation}\label{vust4}
  u_{k1}(x) - l_kx_n \leq \frac{1}{4N}\quad\text{ whenever }\ x \in B_{\frac{1}{M}}(0) \cap \overline{\Omega_{k1}}
\end{equation}
for sufficiently large $k.$
 By \eqref{vust8} and choosing \( M \) sufficiently large, we may also ensure that
\begin{equation}\label{cvk11}
  B_{\frac{1}{M}}(0) \cap \overline{\Omega_{k1}} \subset Dv_k(S_1[v_k] \cap \overline{\Omega^*_{k1}})
\end{equation}
for \( k \) sufficiently large.

Recall that \(\nu_k(x)\) is the unit inner normal of \(\Omega_{k1}\) at \(x \in \mathcal{F}_k \subset \partial \Omega_{k1}\). 
Let \(G_k\) be a connected component of
\(\{x \in B_1(0) \cap \mathcal{F}: \nu_k(x) \cdot e_n > 0\}\) such that \(0 \in G_k\), and 
\[ G_k  = \{ x = (x', x_n) \in \mathbb{R}^n : x_n = \rho_k(x'),\ \ x' \in G'_k \} \] 
for some 
\( C^{1,\beta} \) function \( \rho_k \) satisfying \( \rho_k(0) = 0 \) and \( D\rho_k(0) = 0 \), where \( G'_k \subset \mathbb{R}^{n-1} \) is the orthogonal projection of \( G_k \) onto the plane \( \{x_n = 0\} \). Note that \( G'_k \) is a connected open set in $\mathbb{R}^{n-1}$ containing the origin.

If \( p \in \mathcal{F}_k\cap B_{\frac{1}{M}}(0) \cap Dv_k(\overline{S_1[v_k]\cap\Omega^*_{k2}}) \), 
we have that \( Du_{k2}(p)\in \overline{S_1[v_k] \cap \Omega^*_{k2}} \).
By \eqref{cvk11}, we have that \( Du_{k1}(p)\in S_1[v_k] \cap \overline{\Omega^*_{k1}} \).
Since the width of \( S_1[v_k] \cap \Omega^*_{ki} \) (for \( i = 1, 2 \)) in the \( e_1 \) direction is bounded by a universal constant \( C \), it follows that \[ \left|\left(Du_{k1}(p) - Du_{k2}(p)\right)\cdot e_1\right|\leq C.\]
By Lemma \ref{limitdist} we have that \( |Du_{k1}(p) - Du_{k2}(p)|\geq l_k \) provided \( k \) is sufficiently large.
Hence, it follows that
\begin{equation}\label{vust15}
  \left| \nu_k(p) \cdot e_1 \right| = \left| \frac{Du_{k1}(p) - Du_{k2}(p)}{|Du_{k1}(p) - Du_{k2}(p)|} \cdot e_1 \right| \leq \frac{C}{l_k}
\end{equation}
for some  constant \( C \) independent of \( k \), provided \( k \) is sufficiently large.
By Lemma~\ref{limitdist}, we have that \(\text{diam}\left(S_1[v_k] \cap \Omega^*_{k1}\right) \ll l_k\) and \(\text{diam}\left(S_1[v_k] \cap \Omega^*_{k2}\right) \ll l_k\) for sufficiently large \(k\), which implies that 
\begin{equation}\label{nukp1}
\nu_k(p) \cdot e_n > \frac{1}{2} \quad \forall\, p \in \mathcal{F}_k \cap B_{\frac{1}{M}}(0) \cap Dv_k(\overline{S_1[v_k] \cap \Omega^*_{k2}}).
\end{equation}
From inequality~\eqref{vust15}, if we further assume that \( p \in G_k \), we can deduce that
\begin{equation}\label{7yue}
  \left| D_1\rho_k(p) \right| \leq \frac{C}{l_k}
\end{equation}
for sufficiently large \( k \).

Let
\[ p_t = (-t, 0, \ldots, 0, \rho_k(-t, 0, \ldots, 0)) \]
denote a point in \( G_k \cap B_1(0) \cap\text{span}\{e_1, e_n\}\).
Now, define
\begin{equation}
s_{k0} := \sup \left\{ s \mid -te_1\in G'_k,\ p_t \in G_k\cap B_{\frac{1}{2M}}(0) \cap Dv_k\left(\overline{S_1[v_k] \cap \Omega^*_{k2}}\right) \ \forall\,t\in[0,s] \right\}.
\end{equation}
We aim to establish a lower bound for $s_{k0}$ in the following lemma.

\begin{lemma}\label{ests}
For sufficiently large \( k \), we have
\begin{equation}\label{dudu}
s_{k0} \geq \min\left\{\frac{1}{4M}, \frac{1}{4NC}\right\},
\end{equation}
where \( C \) is a positive constant independent of \( k \).
\end{lemma}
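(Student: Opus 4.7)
The plan is to argue by contradiction and bootstrap continuity, exploiting the estimate \eqref{7yue} to control the vertical height of $p_t$ along the path. Set $s^{*} = \min\{\tfrac{1}{4M}, \tfrac{1}{4NC}\}$ where $C$ is the constant appearing in \eqref{7yue}, and suppose that $s_{k0} < s^{*}$ for some large $k$. By the very definition of $s_{k0}$ as a supremum and by continuity of $\rho_k$, $\nu_k$, $Du_{k1}$ and $Du_{k2}$, at least one of the defining constraints must become borderline at $t = s_{k0}$: either (a) $-s_{k0}e_1 \notin \overline{G'_k}$; (b) $p_{s_{k0}} \notin B_{\frac{1}{2M}}(0)$; or (c) $p_{s_{k0}} \notin Dv_k(\overline{S_1[v_k] \cap \Omega^*_{k2}})$. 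The strategy is to rule out each possibility.

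For every $t \in [0, s_{k0})$ the inclusion $p_t \in G_k \cap B_{\frac{1}{2M}}(0) \cap Dv_k(\overline{S_1[v_k] \cap \Omega^*_{k2}}) \subset G_k \cap B_{\frac{1}{M}}(0) \cap Dv_k(\overline{S_1[v_k] \cap \Omega^*_{k2}})$ holds, hence \eqref{7yue} applies and gives $|D_1\rho_k(p_t)| \leq C/l_k$. Integrating along the $-e_1$ direction,
\[
|\rho_k(-t,0,\ldots,0)| \leq \frac{Ct}{l_k} \leq \frac{C s_{k0}}{l_k}.
\]
For (b): $|p_{s_{k0}}| \leq s_{k0} + Cs_{k0}/l_k < \tfrac{1}{4M} + o(1) < \tfrac{1}{2M}$ once $k$ is sufficiently large, so (b) fails. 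For (c), use that $p_{s_{k0}} \in \overline{G_k} \subset \overline{\mathcal{F}_k}$ so $u_{k1}(p_{s_{k0}}) = u_{k2}(p_{s_{k0}})$. Combined with \eqref{vust4},
\[
u_{k2}(p_{s_{k0}}) + l_k(p_{s_{k0}})_n = u_{k1}(p_{s_{k0}}) - l_k(p_{s_{k0}})_n + 2l_k\rho_k(-s_{k0},0,\ldots,0) \leq \tfrac{1}{4N} + 2Cs_{k0} \leq \tfrac{1}{4N} + \tfrac{1}{2N} < \tfrac{1}{N},
\]
so that $p_{s_{k0}} \in S_{\frac{1}{N}}[u_{k2} + l_k x_n](0) \cap \mathcal{F}_k$; Lemma \ref{alpha1} then places $p_{s_{k0}}$ in $Dv_k(\overline{S_1[v_k] \cap \Omega^*_{k2}})$, ruling out (c). Finally, for (a), since $p_t \in B_{\frac{1}{M}}(0) \cap Dv_k(\overline{S_1[v_k] \cap \Omega^*_{k2}})$ for $t < s_{k0}$, estimate \eqref{nukp1} yields $\nu_k(p_t) \cdot e_n > \tfrac{1}{2}$; by continuity, $\nu_k(p_{s_{k0}}) \cdot e_n \geq \tfrac{1}{2} > 0$, so $p_{s_{k0}}$ lies in the same connected component $G_k$ and $-s_{k0}e_1 \in G'_k$.

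All three failure modes being excluded contradicts the maximality of $s_{k0}$, establishing $s_{k0} \geq s^{*}$. The main subtlety is the logically circular flavour of the argument: the gradient bound \eqref{7yue}, the normal estimate \eqref{nukp1} and the inclusion from Lemma \ref{alpha1} are all valid precisely under the hypotheses we are trying to propagate, so one must carefully set up a bootstrap on the half-open interval $[0, s_{k0})$ and use continuity to close it up at $s_{k0}$; the largeness of $l_k$ provided by \eqref{go2i} is what absorbs the error term $Cs_{k0}/l_k$ in the ball constraint. No other delicate estimate is needed beyond what the lemmas already preceding \eqref{dudu} supply.
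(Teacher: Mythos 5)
Your proposal is correct and follows essentially the same route as the paper's proof: the same estimates from \eqref{7yue}, \eqref{vust4}, \eqref{nukp1} and Lemma~\ref{alpha1} are combined to show that at $t=s_{k0}$ all three defining constraints persist, forcing the contradiction with maximality of $s_{k0}$. The paper organizes it slightly differently (assuming the negation and showing $p_{s_0}\notin Dv_k(S_1[v_k]\cap\overline{\Omega^*_{k2}})$, then deriving $\frac{1}{N}\leq u_{k2}(p_{s_0})+l_k(p_{s_0})_n<\frac{1}{N}$), but your three-case decomposition at the sup is a logically equivalent reorganization of the same argument.
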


\begin{proof}
Denote \( s_{k0} \) by \( s_0 \) for simplicity. Up to a subsequence, we may assume that
\( p_t \) converges to a point \( p_{s_0} \in B_{\frac{1}{M}}(0) \cap \mathcal{F}_k \) as \( t\rightarrow s_0 \).
For any \( t \in (0, s_0) \), we have that 
\[
p_{t} \in \mathcal{F}_k \cap B_{\frac{1}{M}}(0) \cap Dv_k(\overline{S_1[v_k] \cap \Omega^*_{k2}}).
\]
Then, it follows from \eqref{nukp1} that \( \nu_k(p_t) \cdot e_n \geq \frac{1}{2} \) for \( t \in (0, s_0) \) and \( k \) sufficiently large.
By continuity, we have
\[
\nu_k(p_{s_0}) \cdot e_n \geq \frac{1}{2},
\]
which implies that 
\begin{equation}\label{pscon1}
p_{s_0} \in G_k.
\end{equation}

Suppose that \eqref{dudu} does not hold; then, \( s_0 < \frac{1}{4M} \).
By estimate \eqref{7yue}, we have 
\[
|\rho_k(-t, 0, \ldots, 0)| \leq \frac{Ct}{l_k} \leq \frac{C}{4Ml_k} \leq \frac{1}{4M} \quad\text{ for } t < s_0
\] 
provided \( k \) is sufficiently large.
Now, for \( t \in (0, s_0) \), we have that 
\[
|p_t| = \sqrt{t^2 + |\rho_k(-t, 0, \ldots, 0)|^2} \leq \frac{1}{2\sqrt{2}M}.
\]
By continuity, this implies that 
\begin{equation}\label{pscon2}
p_{s_0} \in B_{\frac{1}{2M}}(0)
\end{equation}
provided \( k \) is sufficiently large.

We \emph{claim} that 
\begin{equation}\label{sknot2}
p_{s_0} \notin Dv_k\left(S_1[v_k] \cap \overline{\Omega^*_{k2}}\right)
\end{equation}
for \( k \) sufficiently large.
Otherwise, by \eqref{pscon1} and \eqref{pscon2}, for sufficiently large \( k \), we could extend \( t \) beyond \( s_0 \), namely,
\[
p_{s_0+\epsilon} := (-s_0-\epsilon, 0, \ldots, 0, \rho_k(-s_0-\epsilon, 0, \ldots, 0)) \in G_k \cap B_{\frac{1}{2M}}(0) \cap Dv_k\left(\overline{S_1[v_k] \cap \Omega^*_{k2}}\right),
\]
which would be contradicting the definition of \( s_0 \).

Combining \eqref{sknot2} and \eqref{vust2},
we obtain
\begin{equation}\label{dest}
u_{k2}(p_{s_0}) + l_kp_{s_0} \cdot e_n \geq \frac{1}{N}.
\end{equation}
On the other hand, since $s_0<\frac{1}{4NC}$ by assumption, by \eqref{vust4} and \eqref{7yue} we obtain
	\begin{align}
		u_{k1}(p_{s_0}) &\leq \frac{1}{4N} + l_kp_{s_0} \cdot e_n, \nonumber \\
			&\leq \frac{1}{4N} + l_k\frac{C}{l_k}s_0, \label{ests11} \\
			&\leq \frac{1}{2N}. \nonumber 
	\end{align}
Because \( u_{k1} = u_{k2} \) on \( \mathcal{F}_k \), by \eqref{ests11} we have
\begin{align*}
u_{k2}(p_{s_0}) + l_kp_{s_0} \cdot e_n &= u_{k1}(p_{s_0}) + l_kp_{s_0} \cdot e_n \\
&\leq \frac{1}{2N} + \frac{1}{4N} < \frac{1}{N},
\end{align*}
which contradicts \eqref{dest}.
\end{proof}

Let \( t_0 := \min\left\{\frac{1}{4M}, \frac{1}{4NC}\right\} \). By \eqref{7yue} and Lemma \ref{ests}, we deduce that
\[
|\rho_k(-t_0e_1)| = |p_{t_0} \cdot e_n| \leq \frac{C}{l_k}t_0 \rightarrow 0 \quad \text{as} \ k \rightarrow \infty.
\]
Define
\[
\tilde{u}_k(x) := \sup\left\{ x \cdot y - v_k(y + z_k) : y + z_k \in S_1[v_k] \cap \Omega^*_{k1} \right\}.
\]
Since $\left(S_1[v_k] \cap \Omega^*_{k1}\right)-\{x_k\}\subset B_C(0)$ for some constant $C$ independent of $k,$ we have that $\|\tilde u_k\|_{_{Lip}}\leq C.$ 
Note that \( \tilde{u}_k = u_{k1} - l_kx_n \) on \( B_{\frac{1}{2M}}(0) \cap \overline{\Omega_{k1}} \). With the assumption that \( \Omega^*_{k1} \subset \{x_1 \geq 0\} \), we have \( D_1\tilde{u}_k \geq 0 \). It implies that \( \tilde{u}_k(-t_0e_1)\leq  \tilde{u}_k(0)=0\).
 Consequently,
\begin{align*}
0\leq u_{k1}(p_{t_0}) - l_kp_{t_0} \cdot e_n &= \tilde{u}_k(p_{t_0}) \\
&\leq \tilde{u}_k(-t_0e_1) + \|\tilde u_k\|_{_{Lip}}p_{t_0} \cdot e_n \\
&\leq Cp_{t_0} \cdot e_n \\
&\leq \frac{C}{l_k}t_0 \rightarrow 0 \quad \text{as} \ k \rightarrow \infty,
\end{align*}
which contradicts \eqref{vust3} for sufficiently large \( k \).

\subsection{Case  \textbf{III}: $\nu\cdot \nu^*=0,\ \nu \cdot \hat\nu^* > 0$.}

After an appropriate affine transformation, we can assume that \(\nu\) is parallel to \(\hat{\nu}^*\). 
Indeed, we only need to choose an affine transform \( A \) such that \( A\nu \) is parallel to \( (A^t)^{-1}\hat{\nu}^* \),
see \cite[(3.2)]{CLW3} for the existence of such a transformation. Note that after the transformation, 
\(\frac{(A^t)^{-1}\nu}{|(A^t)^{-1}\nu|}\) as the inner unit normal of \( A\Omega_1 \) at \( 0 \) is still orthogonal to
\(\frac{A\nu^*}{|A\nu^*|}\) as the inner unit normal of \( (A^t)^{-1}\Omega_1^* \) at \( Ay_0 \).

As in \eqref{calf1} and \eqref{ostar1}, we have that \(0 \in \mathcal{F}\), \(y_0 = Du_1(0) = le_n \in \partial\Omega^*_1\), and \(\hat{y}_0 = Du_2(0) = -le_n \in \partial\Omega^*_2\), where \(l\) is a positive universal constant. 
Additionally, we have the unit normals \(\nu = \nu(0) = e_n\), \(\nu^* = \nu^*(y_0) = e_1\), and \(\hat{\nu}^* = \hat{\nu}^*(\hat{y}_0) = -e_n\). By subtracting a constant, we may assume \(u_i(0) = 0\) for \(i = 1, 2\) and \(v(y_0) = v(\hat{y}_0) = 0\).

In this case, Lemma \ref{tc141} is applicable, leading to
\begin{equation*}
u_2(x)-\hat y_0\cdot e_n \leq C_\epsilon |x|^{2-\epsilon} \quad\text{for all } x \in B_{r_0}(0).
\end{equation*}
Given that \(v = u_2^*\) within \(\Omega_2^*\), it follows that
\begin{equation*}
v(y) \geq C_\epsilon |y - \hat y_0|^{2+\epsilon} \quad \forall\, y \in \Omega^*_2.
\end{equation*}
It is noteworthy that in Case \textbf{I}, the specific conditions \(\hat\nu \cdot \hat\nu^* = 0\) and the non-coplanarity of \(\nu, \nu^*, \hat\nu^*\) are exclusively used in the proofs of Lemmas \ref{decayrate2} and \ref{saddle}. 

The proof of Lemma \ref{decayrate2} relies on the inequality
\begin{equation*}
v(y) \geq C_\epsilon |y - \hat y_0|^{3+\epsilon}, \quad \forall y \in \Omega_2^*.
\end{equation*}
In the present scenario, this inequality remains valid since \(|y - \hat y_0|^{2+\epsilon}\) dominates \(|y - \hat y_0|^{3+\epsilon}\) when \(y\) is close to \(\hat y_0\).

By inclusions \eqref{tanc2} and \eqref{equi0}, we can find a point \(p \in S_h[v]\)  such that \(p_1 \gtrsim h^{\frac{1}{2} + \epsilon}\), creating a contradiction with \eqref{saddle1}. Hence, Lemma \ref{saddle} is still valid in this particular scenario. As a result, the remainder of the proof from Case \textbf{I} can be replicated to eliminate the possibility of the current case.

\section{$C^{2,\alpha}$ estimates}
\label {S5}
Let \( \Omega, \Omega_1, \Omega_2, \Omega^*, \Omega^*_1, \Omega^*_2, \mathcal{F}, f, g, u, u_1, u_2, v, \nu, \hat\nu, \nu^*, \) and \( \hat\nu^* \) be as defined in Section \ref{S3}. Fix a point \( x_0 \in \mathcal{F} \) and denote \( y_0 = Du_1(x_0) \). 
Thanks to Proposition \ref{oblique2}, \( \nu \cdot \nu^* > 0 \), we can make a change of coordinates such that \( x_0 = y_0 = 0 \), and for some sufficiently small \( r > 0 \), 
\[
\Omega_1 \cap B_r(0) = \{x_n > \rho(x')\} \cap B_r(0) \quad \text{and} \quad \Omega_1^* \cap B_r(0) = \{x_n > \rho^*(x')\} \cap B_r(0),
\]
where \( \rho \) is a \( C^{1,\beta} \) function satisfying \( \rho(0) = 0 \) and \( D\rho(0) = 0 \), and \( \rho^* \) is a \( C^2 \) uniformly convex function with \( \rho^*(0) = 0 \) and \( D\rho^*(0) = 0 \). Here, \( x' = (x_1, \ldots, x_{n-1}) \).

Given the above configuration (which follows from the obliqueness estimate), and using the tangential $C^{1,1-\epsilon}$ estimate for $v$ (Corollary \ref{co21}), the duality relation \eqref{dual222}, and the arguments in the proofs of Lemmas 3.3 and 3.4 in \cite{CLW3}, we can conclude that:
\begin{lemma} \label{tc12}
For any $\epsilon>0$ small, there exists a constant $C_\epsilon$ such that:
\begin{align}
    u_1(x) &\geq C_\epsilon |x'|^{2+\epsilon} \quad \forall\, x \in \Omega_1\cap B_r(0), \label{ustconv} \\
    u_1(te_n) &\leq C_\epsilon |t|^{2-\epsilon} \quad \forall\, t\ \text{satisfying that \(|t|\) is sufficiently small}. \nonumber
\end{align}
\end{lemma}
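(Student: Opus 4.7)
The strategy is to adapt the proofs of Lemmas 3.1 and 3.2 in~\cite{CLW3}, which are now applicable because Proposition~\ref{oblique2} has established the obliqueness condition $\nu\cdot\nu^*>0$ at $0$. In the coordinates of the statement we have $u_1(0)=0$, $Du_1(0)=0$, $v(0)=0$, $v\geq 0$, and the inner normals to both $\partial\Omega_1$ and $\partial\Omega_1^*$ at $0$ are aligned with $e_n$, so all the machinery of Section~\ref{S4} is available at $0$. In particular, Corollary~\ref{co21}(ii) yields the tangential $C^{1,1-\epsilon}$ estimate
\[
B_{C_\epsilon h^{\frac{1}{2}+\epsilon}}(0)\cap\{y_n=0\}\subset S^c_h[v](0),
\]
which, together with $y_n=\rho^*(y')=O(|y'|^2)$ on $\partial\Omega_1^*$ (uniform convexity), translates into $v(y)\leq C_\epsilon|y'|^{2-\epsilon}$ on $\partial\Omega_1^*\cap B_r(0)$.

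For the tangential lower bound \eqref{ustconv} the plan is to use Legendre duality. Writing
\[
u_1(x)=\sup_{y\in\overline{\Omega_1^*}}\{x\cdot y-v(y)\}
\]
and testing against $y_s:=\big(s\,x'/|x'|,\,\rho^*(s\,x'/|x'|)\big)\in\partial\Omega_1^*$ for $s>0$, one gets
\[
u_1(x)\;\geq\; s|x'|+x_n\rho^*(s\,x'/|x'|)-v(y_s)\;\geq\; s|x'|-C|x_n|s^2-C_\epsilon s^{2-\epsilon}.
\]
Since $|x_n|\leq r$ is small, the middle term is a lower-order perturbation, and optimizing at $s\sim|x'|^{1/(1-\epsilon)}$ yields $u_1(x)\geq C_\epsilon|x'|^{(2-\epsilon)/(1-\epsilon)}$. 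As $(2-\epsilon)/(1-\epsilon)=2+\epsilon/(1-\epsilon)$ and $\epsilon>0$ is arbitrary, this gives \eqref{ustconv} after relabeling.

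For the normal upper bound $u_1(te_n)\leq C_\epsilon|t|^{2-\epsilon}$, I intend to analyze the shape of the centred section $S^c_h[u_1](0)$ via polar duality with $S^c_h[v](0)$. The tangential lower bounds on the widths of $S^c_h[v](0)$ (of order $h^{1/2+\epsilon}$) dually imply upper bounds on the tangential widths of $S^c_h[u_1](0)$, of order $C_\epsilon h^{1/2-\epsilon}$ in each direction $e_i$ with $i<n$. Coupled with the volume estimate $|S^c_h[u_1](0)|\approx h^{n/2}$ (obtained exactly as in Corollary~\ref{co21}(i), now applied to $u_1$ at $0$), this forces the $e_n$-width of $S^c_h[u_1](0)$ to be at least $c\,h^{1/2+(n-1)\epsilon}$. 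Consequently $te_n\in S_h[u_1](0)$ whenever $|t|\leq c\,h^{1/2+(n-1)\epsilon}$, which rearranges to $u_1(te_n)\leq C|t|^{2/(1+2(n-1)\epsilon)}$ and, after relabeling $\epsilon$, gives the claim.

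The main obstacle is verifying that the polar duality between sections, the uniform density property at $0$, and the volume/doubling estimates for sections of $u_1$ all transfer from the convex-domain settings of~\cite{C96,CLW1,CLW3} to the present setting where $\Omega_1$ is only $C^{1,\beta}$ and non-convex. This is precisely what Section~\ref{S4} accomplishes at $0$ by pushing everything back through $v$ to the convex target $\Omega_1^*$; once these tools are recorded at $0$, the computations above are routine adaptations of~\cite{CLW3}.
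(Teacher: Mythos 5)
Your argument for the tangential lower bound $u_1(x)\geq C_\epsilon|x'|^{2+\epsilon}$ is sound: testing the Legendre representation $u_1(x)=\sup_{y\in\overline{\Omega_1^*}}\{x\cdot y-v(y)\}$ against the boundary points $y_s=(s x'/|x'|,\rho^*(sx'/|x'|))\in\partial\Omega_1^*$ and invoking the tangential $C^{1,1-\epsilon}$ estimate of Corollary~\ref{co21}(ii) for $v$ is exactly the device used in \cite{CLW3}, and the middle term $x_n\rho^*$ is indeed lower order. (One should note the lower bound actually holds for \emph{all} $x\in B_r(0)$, not just $x\in\Omega_1$, which you will need below.)

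The second half, however, as you have framed it, contains a circularity. You invoke ``$|S^c_h[u_1](0)|\approx h^{n/2}$ obtained exactly as in Corollary~\ref{co21}(i), now applied to $u_1$ at $0$.'' But Corollary~\ref{co21}(i) rests on the uniform density property (Lemma~\ref{ud1}), which for $v$ at $y_0$ follows from the convexity of $\Omega_1^*$. For $u_1$ at $x_0$, the corresponding uniform density estimate is Lemma~\ref{ud3}, which is proved \emph{after} the present lemma and whose proof uses \emph{both} inequalities of Lemma~\ref{tc12}. Your closing paragraph suggests that Section~\ref{S4} supplies the uniform density and volume estimates for sections of $u_1$; it does not --- Section~\ref{S4} establishes these for $v$ near $y_0$ and for the blow-up profiles $\tilde u,\tilde v$, not for $u_1$ at $x_0$. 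Likewise, the ``polar duality'' between $S^c_h[u_1]$ and $S^c_h[v]$ in the form established in Section~\ref{S4} requires uniform density on both sides.

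The good news is that your strategy survives with two repairs, both of which bypass the uniform density for $u_1$. First, only the \emph{lower} volume bound $|S^c_h[u_1]|\geq c\,h^{n/2}$ is needed, and this follows directly from the Alexandrov estimate together with the upper Monge--Amp\`ere bound $\det D^2u_1\leq C\chi_{\Omega_1}$ from \eqref{Asolv}: writing $S=S^c_h[u_1]=\{u_1<\ell\}$, one has $h^n\leq|\inf_S(u_1-\ell)|^n\leq C_n|\partial^-u_1(S)|\,|S|\leq C_n\lambda^2|S|^2$. Second, the tangential width bound $S^c_h[u_1]\subset\{|x'|<C_\epsilon h^{\frac12-\epsilon}\}$ should come not from a polar duality statement but either from the first inequality (valid on all of $B_r(0)$, hence on the whole section once $h$ is small enough that $S^c_h[u_1]\subset B_r(0)$), or from the Fenchel--Young inequality $u_1(x)+v(y)\geq x\cdot y$, which holds only for $y\in\overline{\Omega_1^*}$; your test points $te_i$ ($i<n$) lie on the tangent hyperplane $\{y_n=0\}$, which is \emph{outside} $\overline{\Omega_1^*}$, so one must use the lifted points $(te_i',\rho^*(te_i'))\in\partial\Omega_1^*$. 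Finally, passing from ``$e_n$-width $\geq c\,h^{1/2+(n-1)\epsilon}$'' to ``$\pm t e_n\in S^c_h[u_1]$'' is not automatic; it requires the ellipsoid computation (for the John ellipsoid $E$ of $S^c_h[u_1]$, with $N=M^{-1}$, Hadamard's inequality gives $\det N'\leq\prod_{i<n}N_{ii}$, whence the half-length of the $e_n$-segment $1/\sqrt{M_{nn}}=\sqrt{\det N/\det N'}\geq c|E|/a^{n-1}$). With these repairs the argument closes and matches, in spirit, the proof the paper cites.
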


In \cite{CLW3}, by utilising $\rho(x') \leq C|x'|^2$ (derived from the interior ball property for optimal partial transport), a uniform density estimate for $u_1$ can then be derived. 
In the current situation, we only know that $\mathcal{F}$ is $C^{1,\beta}$ regular, namely $\rho(x') \leq C|x'|^{1+\beta}$ for some $\beta\in(0,1)$.
Below, we modify the proof of \cite[Lemma 3.5]{CLW3} to obtain the uniform density estimate under this weaker condition. 
\begin{lemma} \label{ud3}
For any $h>0$ small, one has 
\[
\frac{|S^c_h[u_1] \cap \Omega_1|}{|S^c_h[u_1]|} \geq \delta_0,
\]
where $\delta_0 > 0$ is a constant independent of $h$, and $S^c_h[u_1] = S^c_h[u_1](0)$ denotes the centred section of $u_1$ with height $h$.
\end{lemma}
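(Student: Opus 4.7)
The plan is to argue as in Caffarelli's uniform density estimate \cite[Theorem 3.1]{C96} and its $u$-side variant \cite[Lemma 3.3]{CLW3}, adapting to the weaker $C^{1,\beta}$ regularity of $\mathcal{F}$. The key input is the Alexandrov maximum principle applied to $w_h := u_1 - \ell_h - h$, where $\ell_h(x) = \bar p_h \cdot x$ is the centring affine for $S := S_h^c[u_1]$, so that $w_h$ vanishes on $\partial S$ and $w_h(0) = -h$. Since the Brenier map $Du_1 : \Omega_1 \to \Omega_1^*$ already covers $\Omega_1^*$ almost everywhere, one has $|\partial^- u_1(\mathbb{R}^n \setminus \Omega_1)| = 0$, and \eqref{Asolv} gives $|\partial^- u_1(S)| \leq \lambda^2 |S \cap \Omega_1|$. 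Using that $S$ is centred at $0$, so $\dist(0,\partial S) \cdot (\diam S)^{n-1} \approx |S|$, the Alexandrov estimate yields
\begin{equation*}
 h^n \leq C_n\,\dist(0,\partial S)\,(\diam S)^{n-1}\,|\partial^- u_1(S)| \leq C\,|S|\cdot|S\cap\Omega_1|.
\end{equation*}

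Granted the matching upper bound $|S|\leq C h^{n/2}$, the above rearranges to $|S\cap\Omega_1|/|S| \gtrsim h^n/|S|^2 \geq \delta_0 > 0$, which is the desired uniform density. To obtain this volume upper bound, I will analyse the shape of $S$ using Lemma~\ref{tc12}. The centring of $S$ at $0$ forces $|\ell_h|\lesssim h$ on $S$; combined with $u_1(x) \geq C_\epsilon|x'|^{2+\epsilon}$, this confines $S\cap\Omega_1$ to the tangential slab $\{|x'|\lesssim h^{1/(2+\epsilon)}\}$. The one-sided vertical growth $u_1(te_n) \leq C_\epsilon t^{2-\epsilon}$ for $t > 0$, together with the fact that $u_1 \equiv 0$ on $\{te_n : t \leq 0\}$ (a consequence of $\Omega_1^* \subset \{y_n \geq \rho^*(y')\}\subset\{y_n\geq 0\}$ near $y_0 = 0$) and the centring, bounds the $e_n$-extent of $S\cap\Omega_1$ by $\lesssim h^{1/(2-\epsilon)}$. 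Meanwhile, $S\setminus\Omega_1$ lies in the same tangential slab with $|x_n|\leq|\rho(x')|$, and the $C^{1,\beta}$ regularity of $\mathcal{F}$ yields $|\rho(x')|\lesssim h^{(1+\beta)/(2+\epsilon)}$ there. Summing the contributions from $S\cap\Omega_1$ and $S\setminus\Omega_1$ gives the volume bound $|S|\leq C h^{n/2}$.

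The main obstacle is the volume upper bound under the weaker $C^{1,\beta}$ regularity of $\mathcal{F}$. In \cite[Lemma 3.3]{CLW3} the analogous step used the quadratic estimate $|\rho(x')|\leq C|x'|^2$ arising from the interior-ball property in optimal partial transport, which is unavailable in the current setting. The resolution is a careful scale-matching: for the complementary region $S\setminus\Omega_1$ to be thinner in the $e_n$-direction than $S\cap\Omega_1$, one needs $(1+\beta)/(2+\epsilon)>1/(2-\epsilon)$, i.e., $\epsilon<2\beta/(2+\beta)$. Fixing such an $\epsilon$ depending only on the $C^{1,\beta}$ exponent $\beta$ from Theorem~\ref{t111}, the shape analysis above carries through and yields $|S|\leq C h^{n/2}$, completing the proof of the uniform density estimate.
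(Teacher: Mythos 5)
Your strategy---an Alexandrov-type inequality $h^n \leq C|S|\,|S\cap\Omega_1|$ coupled with a volume upper bound $|S|\lesssim h^{n/2}$---is genuinely different from the paper's, and it contains a real gap in the second half.

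The first half is essentially sound. The measure bound $|\partial^-u_1(A)|\leq C|A\cap\Omega_1|$ does hold because $|\partial^-u_1(\mathbb{R}^n\setminus\Omega_1)|=0$ (the subdifferential over $\Omega_1$ already fills $\Omega_1^*$ up to a null set, and the subdifferential over $\mathbb{R}^n$ is contained in $\overline{\Omega_1^*}$). One caveat: your intermediate claim that $\dist(0,\partial S)\,(\diam S)^{n-1}\approx|S|$ is not correct for a general balanced convex set; only $\gtrsim|S|$ holds. The inequality $h^n\leq C|S|\,|\partial^-u_1(S)|$ is nonetheless true, either via the John-normalised Alexandrov estimate or via \cite[Proposition~1]{FK1} (quoted as \eqref{ua1} in the paper), so this is a repairable misstatement rather than a gap.

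The real gap is the volume upper bound $|S|\leq Ch^{n/2}$, which you cannot obtain at this stage. Two specific problems: (i) You confine $S\setminus\Omega_1$ to the tangential slab $\{|x'|\lesssim h^{1/(2+\epsilon)}\}$, but the lower bound $u_1(x)\geq C_\epsilon|x'|^{2+\epsilon}$ from Lemma~\ref{tc12} is only asserted for $x\in\Omega_1$; for $x_n<\rho(x')$ no such lower bound is available (only $u_1\geq 0$ and $\partial_n u_1\geq 0$), so $S\setminus\Omega_1$ can in principle spread tangentially. (ii) You claim the estimate $u_1(te_n)\leq C_\epsilon t^{2-\epsilon}$ together with $u_1\equiv 0$ on $\{te_n: t\leq 0\}$ bounds the $e_n$-extent of $S\cap\Omega_1$ by $\lesssim h^{1/(2-\epsilon)}$. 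This is backwards: an upper bound on $u_1$ on the $e_n$-axis gives a \emph{lower} bound on the vertical extent of the centred section, not an upper bound. (The lower bound $u_1\geq C_\epsilon|x|^{2+\epsilon}$ that would close this, Lemma~\ref{tc14}(3), is proved \emph{after} and \emph{using} the uniform density lemma, so it cannot be invoked here.) More fundamentally, the scaling $|S^c_h[u_1]|\approx h^{n/2}$ is a \emph{consequence} of uniform density together with the two-sided Monge--Amp\`ere bounds; it is not a prior fact that one can feed into the density estimate.

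The paper avoids the volume bound entirely. It shows the vertical extent $s$ of $S_h^c[u_1]$ satisfies $s\gtrsim h^{\frac12+\epsilon}$ (from the upper bound $u_1(te_n)\leq C_\epsilon t^{2-\epsilon}$ and centring), and that over the tangential slab $\{|x'|<C_\epsilon h^{\frac12-\epsilon}\}$ containing $S_h^c[u_1]\cap\Omega_1$, the hypersurface $\mathcal{F}=\{x_n=\rho(x')\}$ stays below height $C'h^{(\frac12-\epsilon)(1+\beta)}\ll h^{\frac12+\epsilon}$ once $\epsilon<\frac{\beta}{4(1+\beta)}$. Hence the slab $\{x_n\geq C'h^{\frac12+\frac{\beta}{4}}\}\cap S_h^c[u_1]$ lies strictly in $\Omega_1$, and because $S^c_h[u_1]$ is balanced at $0$ with vertical reach $s\gg h^{\frac12+\frac{\beta}{4}}$, this slab captures a fixed fraction of $|S_h^c[u_1]|$. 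This directly produces the density bound without ever estimating $|S|$ from above, and it is the scale-matching that makes the $C^{1,\beta}$ hypothesis suffice. Your proposal isolates the right scale-matching condition on $\epsilon$ versus $\beta$ but applies it inside an argument that cannot be completed in the stated form.
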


\begin{proof}
Consider the intersections of $\partial S_h^c[u_1]$ with the $x_n$-axis, denoted by $z = se_n$ and $\tilde{z} = -\tilde{s}e_n$, where $s, \tilde{s} > 0$. Since $S_h^c[u_1]$ is centred at $0$, one has $s\approx \tilde{s}$. Consequently, we deduce that either $u_1(z) \geq Ch$ or $u_1(\tilde{z}) \geq Ch$ holds. 
Then, by Lemma \ref{tc12}, we have
\begin{equation}\label{gs1}
s \approx \tilde{s} \geq C_\epsilon h^{\frac{1}{2} + \epsilon}
\end{equation}
for any arbitrarily small $\epsilon > 0$.

Utilising \eqref{equi0} and Lemma \ref{tc12}, we derive that
\begin{equation}\label{gs2}
S^c_h[u_1] \cap \Omega_1 \subset S_{Ch}[u_1] \cap \Omega_1 \subset \left\{x : |x'| < C_\epsilon h^{\frac{1}{2} - \epsilon} \right\}.
\end{equation}
Recall the bound $|\rho(x')| \leq C|x'|^{1+\beta}$. For any point $x\in S_h^c[u_1] \cap \{x_n \geq C'h^{\frac{1}{2} + \frac{1}{4}\beta}\} \cap \overline{\Omega_1}$, inequality \eqref{gs2} ensures that $|x'| < C_\epsilon h^{\frac{1}{2} - \epsilon}$, which, in turn, implies 
	$$\rho(x') < C' h^{(\frac{1}{2} - \epsilon)(1 + \beta)} \leq C'h^{\frac{1}{2} + \frac{1}{4}\beta} \leq x_n$$ 
provided $\epsilon$ is sufficiently small, where $C' = 2CC_\epsilon^{1 + \beta}$. 
Consequently, $x\in \Omega_1$. This leads us to conclude that
\begin{equation}\label{cptinc121}
S_h^c[u_1] \cap \{x_n \geq C'h^{\frac{1}{2} + \frac{1}{4}\beta}\} \cap \overline{\Omega_1} \Subset \Omega_1.
\end{equation}

Assume, for the sake of contradiction, that there exists a point 
	$$x \in S_h^c[u_1] \cap \{x_n \geq C'h^{\frac{1}{2} + \frac{1}{4}\beta}\} \setminus \Omega_1.$$
Connecting $x$ and $z$ with a line segment, we observe that it intersects $\partial \Omega_1$ at some point $y$. As both $x$ and $z$ belong to $S_h^c[u_1] \cap \{x_n \geq C'h^{\frac{1}{2} + \frac{1}{4}\beta}\}$, the convexity of $u_1$ implies $y \in S_h^c[u_1] \cap \{x_n \geq C'h^{\frac{1}{2} + \frac{1}{4}\beta}\} \cap \partial \Omega_1$, contradicting \eqref{cptinc121}. Therefore, we deduce that
\begin{equation}\label{gs3}
S_h^c[u_1] \cap \{x_n \geq C'h^{\frac{1}{2} + \frac{1}{4}\beta}\} \subset \Omega_1,
\end{equation}
which implies that a ``substantial" portion of $S^c_h[u_1]$ is contained within $\Omega_1$.

Invoking John's Lemma, we identify an ellipsoid $E$ centred at the origin such that $E \subset S_h^c[u_1] \subset CE$. Due to \eqref{gs1}, we have $s \gg C'h^{\frac{1}{2} + \frac{1}{4}\beta}$ for sufficiently small $h$. Considering the convexity of $S_h^c[u_1]$ and \eqref{gs3}, we estimate
\begin{align*}
\left| S_h^c[u_1] \cap \Omega_1 \right| &\geq \left| S_h^c[u_1] \cap \{x_n \geq C'h^{\frac{1}{2} + \frac{1}{4}\beta}\} \right| \\
&\geq \left| E \cap  \{x_n \geq C'h^{\frac{1}{2} + \frac{1}{4}\beta}\} \right| \\
&\geq c\frac{s - h^{\frac{1}{2} + \frac{1}{4}\beta}}{s} |E| \\
&\geq \frac{1}{2}c |S_h^c[u_1]|,
\end{align*}
where $c > 0$ depends only on $n$. Consequently, we obtain
\[
\frac{|S^c_h[u_1] \cap \Omega_1|}{|S^c_h[u_1]|} \geq \frac{c}{2}.
\]
\end{proof}

Once having the uniform density estimate, by the proof and techniques employed in \cite[Lemma 3.8 and Corollary 3.9]{CLW3}, we can obtain the subsequent estimates for $u_1$.

\begin{lemma}\label{tc14}
For any $\epsilon > 0$ small, there exists a constant $C_\epsilon > 0$ such that:
\begin{enumerate}
    \item Tangential $C^{1, 1-\epsilon}$ estimate: $B_{C_\epsilon h^{\frac{1}{2}+\epsilon}}(0) \cap \{x_n = 0\} \subseteq S_h^c[u_1].$
    \item Almost $C^{1, 1}$ estimate:  $u_1(x) \leq C_\epsilon |x|^{2 - \epsilon}$ $\forall\, x \in B_{r_0}(0)$.
  \item Strict convexity in $\Omega_1$:  $u_1(x) \geq C_\epsilon |x|^{2 + \epsilon}$ $\forall\, x \in \Omega_1 \cap B_{r_0}(0)$.  
    \item Gradient bound: $|Du_1(x)| \leq C_\epsilon |x|^{1 - \epsilon}$ $\forall\, x \in B_{\frac{r_0}{2}}(0).$
\end{enumerate}
Here, $r_0 > 0$ is a suitably chosen small constant.
\end{lemma}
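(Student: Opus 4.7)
The plan is to follow the blueprint of \cite[Lemma 3.4 and Corollary 3.2]{CLW3}, with Lemma \ref{ud3} playing the role of the uniform density estimate used there. Since $\det D^2 u_1 \approx \chi_{\Omega_1}$ in the Alexandrov sense, Lemma \ref{ud3} implies that the Monge--Amp\`ere measure generated by $u_1$ is doubling on centred sections at $0$; by \cite[Lemma 2.2]{C96} this yields geometric decay of $S_h^c[u_1]$ and the volume normalisation $|S_h^c[u_1]| \approx h^{n/2}$, together with the analogue of \eqref{equi0} between $S_h^c[u_1]$ and $S_h[u_1]\cap\Omega_1$.

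For the tangential estimate (1), I would normalise $S_h^c[u_1]$ via John's lemma to an ellipsoid $E_h$ centred at $0$ and lower-bound the $(n-1)$ axes of $E_h$ lying in the tangent plane $\{x_n=0\}$ to $\partial\Omega_1$. This is effected through Legendre duality with $v$: the $C^2$ uniform convexity of $\partial\Omega_1^*$, together with the tangential $C^{1,1-\epsilon}$ bound on $v$ at $y_0=0$ from \eqref{tanc2}, controls the dual image $Du_1(S_h^c[u_1])\subset\overline{\Omega_1^*}$ so that it cannot be too concentrated in directions perpendicular to $\nu^*=e_n$. Dualising back, each tangential axis of $E_h$ must be at least $C_\epsilon h^{1/2+\epsilon}$, yielding (1), exactly as in the source-side argument of \cite[Lemma 3.4]{CLW3}.

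The almost $C^{1,1}$ estimate (2) then follows from a dimension count: with $n-1$ tangential axes of $E_h$ of size at least $C_\epsilon h^{1/2+\epsilon}$ and $|E_h|\approx h^{n/2}$, the normal axis is at most $C_\epsilon h^{1/2-(n-1)\epsilon}$, so $x\in S_h^c[u_1]$ implies $|x|\le C_\epsilon h^{1/2-\epsilon'}$, which inverts to $u_1(x)\le C_\epsilon|x|^{2-\epsilon}$. The gradient bound (4) follows from (2) by a routine incremental-ratio argument: with $w=Du_1(x)/|Du_1(x)|$ and $t=|x|^{1+\eta}$ for small $\eta>0$, convexity gives $t|Du_1(x)|\le u_1(x+tw)-u_1(x)\le C_\epsilon|x+tw|^{2-\epsilon}$, whence $|Du_1(x)|\le C_\epsilon|x|^{1-\epsilon}$ after readjusting $\epsilon$.

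The strict convexity estimate (3) is read off from the same shape of $E_h$: the upper bound $C_\epsilon h^{1/2-(n-1)\epsilon}$ on the normal axis of $E_h$ means $|x_n|\le C_\epsilon h^{1/2-\epsilon'}$ whenever $x\in S_h^c[u_1]$, which inverts to a normal-direction strict convexity $u_1(te_n)\ge C_\epsilon|t|^{2+\epsilon}$; combined with the tangential lower bound $u_1(x)\ge C_\epsilon|x'|^{2+\epsilon}$ for $x\in\Omega_1$ from \eqref{ustconv}, the convexity of $u_1$ interpolates these two bounds across $\Omega_1\cap B_{r_0}(0)$ to give (3), exactly as in \cite[Corollary 3.2]{CLW3}. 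The step I expect to be most delicate is (1), since $\partial\Omega_1$ is only $C^{1,\beta}$ rather than enjoying an interior ball property as in \cite{CLW3}; the compensating ingredients are Lemma \ref{ud3} and the full obliqueness Proposition \ref{oblique2}, which together tightly constrain the normalised ellipsoid $E_h$.
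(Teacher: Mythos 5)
Your proposal takes the same high-level route as the paper: the paper's ``proof'' of this lemma is itself just the sentence ``Once having the uniform density estimate, by the proof and techniques employed in \cite[Lemma 3.4 and Corollary 3.2]{CLW3}, we can obtain the subsequent estimates for $u_1$,'' and you correctly identify Lemma~\ref{ud3} as the substitute for the uniform-density step in \cite{CLW3}. Parts (1), (3), and (4) of your sketch are consistent with that blueprint, modulo the usual alignment/normalisation caveats.

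However, your deduction of the almost $C^{1,1}$ estimate (2) has a genuine logical error. You derive, from the lower bound on the tangential axes of $E_h$ together with $|S^c_h[u_1]|\approx h^{n/2}$, that $S^c_h[u_1]\subset B_{C_\epsilon h^{1/2-\epsilon'}}(0)$, and then assert that this ``inverts'' to $u_1(x)\le C_\epsilon|x|^{2-\epsilon}$. It does not: the inclusion $S_h[u_1]\subset B_{Ch^{1/2-\epsilon'}}(0)$ says that $u_1\ge h$ outside $B_{Ch^{1/2-\epsilon'}}(0)$, which is a \emph{lower} bound $u_1(x)\ge c|x|^{2+\epsilon''}$ --- precisely the strict convexity estimate (3). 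The almost $C^{1,1}$ estimate (2) is equivalent to the opposite inclusion $B_{ch^{1/2+\epsilon}}(0)\subset S_h[u_1]$, i.e.\ a \emph{lower} bound on all axes of the normalising ellipsoid, not an upper bound on the normal one. To get this you need the tangential estimate (1) \emph{together with} the normal-direction bound $u_1(te_n)\le C_\epsilon|t|^{2-\epsilon}$ from Lemma~\ref{tc12}, which supplies the missing $e_n$-direction lower bound on the section; convexity then fills in the ball $B_{ch^{1/2+\epsilon}}(0)$ from the tangential disc and the two normal endpoints. You cite Lemma~\ref{tc12} for (3) but not for (2), and (2) is in fact where it is indispensable.

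A secondary imprecision: in (3) you pass from ``the normal axis of $E_h$ is small'' to ``$|x_n|$ is small on $S_h^c[u_1]$,'' but the principal axes of $E_h$ need not be aligned with $e_n$. The correct route there is the volume/convex-hull argument: since $B_{ch^{1/2+\epsilon}}(0)\cap\{x_n=0\}\subset S^c_h[u_1]$ by (1), any $te_n\in S^c_h[u_1]$ forces $(h^{1/2+\epsilon})^{n-1}|t|\lesssim h^{n/2}$, giving the bound on the $e_n$-extent directly without reference to the axes of $E_h$. This does not change the conclusion, but it is the argument that actually closes (3).
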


Thanks to Proposition \ref{oblique2}, $\hat{\nu} \cdot \hat{\nu}^* > 0$, similarly we can establish analogous estimates for the function $u_2$. 
As before, by a change of coordinates we may assume \( x_0 = \hat y_0 = 0 \) and \[
\Omega_2 \cap B_r(0) = \{x_n > \tilde\rho(x')\} \cap B_r(0) \quad \text{and} \quad \Omega_2^* \cap B_r(0) = \{x_n > \tilde\rho^*(x')\} \cap B_r(0),
\]
where \(\tilde \rho \) is a \( C^{1,\beta} \) function satisfying \( \tilde\rho(0) = 0 \) and \( D\tilde\rho(0) = 0 \), and \( \tilde\rho^* \) is a \( C^2 \) uniformly convex function with \( \tilde\rho^*(0) = 0 \) and \( D\tilde\rho^*(0) = 0 \). 
\begin{lemma}\label{tc141}
For any $\epsilon > 0$ small, there exists a constant $C_\epsilon > 0$ such that:
\begin{enumerate}
    \item Tangential $C^{1, 1-\epsilon}$ estimate: $B_{C_\epsilon h^{\frac{1}{2}+\epsilon}}(0) \cap \{x_n = 0\} \subseteq S_h^c[u_2].$
    \item Almost $C^{1, 1}$ estimate:  $u_2(x) \leq C_\epsilon |x|^{2 - \epsilon}$ $\forall\, x \in B_{r_0}(0)$.
  \item Strict convexity in $\Omega_2$:  $u_2(x) \geq C_\epsilon |x|^{2 + \epsilon}$ $\forall\, x \in \Omega_2 \cap B_{r_0}(0)$.  
    \item Gradient bound: $|Du_2(x)| \leq C_\epsilon |x|^{1 - \epsilon}$ $\forall\, x \in B_{\frac{r_0}{2}}(0).$
\end{enumerate}
Here, $r_0 > 0$ denotes a sufficiently small constant.
\end{lemma}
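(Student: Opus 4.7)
The plan is to observe that Lemma \ref{tc141} is the exact analogue of Lemma \ref{tc14} with the roles of $(\Omega_1,\Omega_1^*,u_1,\nu,\nu^*,y_0)$ replaced by $(\Omega_2,\Omega_2^*,u_2,\hat\nu,\hat\nu^*,\hat y_0)$. The whole derivation of Lemma \ref{tc14} only used three structural ingredients about the first component: obliqueness at the base point (namely $\nu\cdot\nu^*>0$), the $C^{1,\beta}$ regularity of $\mathcal F$ near $x_0$ as a graph $x_n=\rho(x')$, and the $C^2$ uniform convexity of $\partial\Omega_1^*$ at $y_0$. By Proposition \ref{uniob} we also have $\hat\nu\cdot\hat\nu^*>0$ at $\hat y_0\in\partial\Omega_2^*$; moreover $\mathcal F$ is one and the same hypersurface (so its $C^{1,\beta}$ graph representation is available from the $\Omega_2$-side after the indicated change of coordinates), and $\partial\Omega_2^*\in C^2$ is uniformly convex by hypothesis. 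Therefore all the hypotheses that fed the proof of Lemma \ref{tc14} are in force for $u_2$.

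The execution proceeds in three steps, each a verbatim repetition with the indices relabelled. First, I would establish the analogue of Lemma \ref{tc12}: the convexity bound
\[
u_2(x)\ge C_\epsilon|x'|^{2+\epsilon}\quad \forall\,x\in\Omega_2\cap B_r(0),\qquad u_2(te_n)\le C_\epsilon|t|^{2-\epsilon}
\]
(after subtracting the affine function $\hat y_0\cdot x$ to normalise $u_2(0)=0$ and $Du_2(0)=0$). These follow from the proof of \cite[Lemmas 3.1,~3.2]{CLW3} applied at $\hat y_0\in\partial\Omega_2^*$, using the $C^2$ uniform convexity of $\partial\Omega_2^*$ and obliqueness $\hat\nu\cdot\hat\nu^*>0$ in precisely the same way as for $u_1$.

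Second, I would repeat the argument of Lemma \ref{ud3} to obtain the uniform density estimate
\[
\frac{|S^c_h[u_2]\cap\Omega_2|}{|S^c_h[u_2]|}\ge\delta_0,
\]
where the only input used beyond the two inequalities of Step~1 is the bound $|\tilde\rho(x')|\le C|x'|^{1+\beta}$ coming from the $C^{1,\beta}$ regularity of $\mathcal F$ established in Theorem \ref{t111}. The geometric portion of the argument (intersection of $\partial S^c_h[u_2]$ with the $x_n$-axis, the containment $S^c_h[u_2]\cap\{x_n\ge C'h^{1/2+\beta/4}\}\subset\Omega_2$, and the volume comparison via John's lemma) transfers word for word.

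Third, with the uniform density in hand, the arguments of \cite[Lemma~3.4, Corollary~3.2]{CLW3} — exactly as invoked for Lemma \ref{tc14} — yield simultaneously the tangential $C^{1,1-\epsilon}$ estimate (1), the almost-$C^{1,1}$ bound (2), the strict convexity (3), and the gradient bound (4). I do not anticipate any essential obstacle here, since the proof of Lemma \ref{tc14} was written in a symmetric form and the only asymmetric inputs (obliqueness, uniform convexity of the target component, $C^{1,\beta}$ graph representation of the free boundary from the relevant side) are all available for the second component by Proposition \ref{uniob}, the hypothesis of Theorem~\ref{main1}, and Theorem~\ref{t111} respectively. Accordingly, the proof is a direct repetition with $1\leftrightarrow 2$ throughout.
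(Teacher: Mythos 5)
Your proposal is correct and takes the same approach as the paper: the paper itself dispatches Lemma~\ref{tc141} with the one-line remark that, thanks to Proposition~\ref{oblique2} giving $\hat\nu\cdot\hat\nu^*>0$, one may run the entire chain of arguments for $u_1$ (Lemmas~\ref{tc12}, \ref{ud3}, \ref{tc14}) verbatim for $u_2$ after the indicated change of coordinates placing $x_0=\hat y_0=0$. Your itemization of the three asymmetric inputs — obliqueness at $\hat y_0$, $C^2$ uniform convexity of $\partial\Omega_2^*$, and the $C^{1,\beta}$ graph representation of $\mathcal F$ as $x_n=\tilde\rho(x')$ — is exactly the justification the paper leaves implicit, so the proposal is a faithful rendering of the intended argument.
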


\begin{proof}[Proof of Theorem \ref{main1}]
Lemmas \ref{tc14} and \ref{tc141} imply that the functions \( u_i \) for \( i = 1, 2 \) are  \( C^{1,1-\epsilon} \) along the singular set \( \mathcal{F} \) for any sufficiently small \( \epsilon > 0 \). Subsequently, from \eqref{gn1}, it follows that \( \mathcal{F} \) itself possesses \( C^{1,1-\epsilon} \) regularity.

Having established the $C^{1,1-\epsilon}$ regularity of both $u_i$ and $\mathcal{F}$, we are now in a position to apply the perturbation argument tailored for the optimal partial transport problem, as delineated in \cite[Section 4]{CLW3}. This argument ensures that the regularity of $u_i$ can be further improved to $C^{2,\alpha}$ along $\mathcal{F}$. From \eqref{gn1} once more, we conclude that the singular set $\mathcal{F}$ is also of class $C^{2,\alpha}$.
\end{proof}

\bibliographystyle{amsplain}

\end{document}